\documentclass[12pt,twoside]{amsart}

\usepackage{times,a4wide,amssymb,amsmath,amsthm,bbm,graphicx}
\usepackage[draft=false]{hyperref}
\usepackage[alphabetic]{amsrefs}

\setlength\parskip{5pt}

\theoremstyle{plain}
\newtheorem{thm}{Theorem}

\newtheorem{lemma}[thm]{Lemma}

\theoremstyle{remark}
\newtheorem*{rem}{Remark}
\theoremstyle{definition}
\newtheorem{defn}{Definition}

\newcommand{\C}{{\mathbb C}}
\newcommand{\R}{{\mathbb R}}
\newcommand{\E}{{\mathbb E}}
\newcommand{\Pro}{{\mathbb P}}
\newcommand{\ind}{{\mathbbm{1}}}

\DeclareMathOperator{\im}{Im}
\DeclareMathOperator{\dl}{Li_2}

\newcommand{\cN}{\mathcal{N}}
\newcommand{\cL}{\mathcal{L}}
\newcommand{\cP}{\mathcal{P}}
\newcommand{\cD}{\mathcal{D}}
\newcommand{\cC}{\mathcal{C}}
\newcommand{\cH}{\mathcal{H}}
\newcommand{\fp}{\mathfrak{P}}

\newcommand\inprod[2]{\ensuremath{\langle\!\langle#1,#2\rangle\!\rangle}}
\newcommand{\Abs}[1]{\left|#1\right|}
\newcommand{\Wi}[1]{:\mathrel{#1}:}
\newcommand{\Wiaa}[2]{\ensuremath{:\mathrel{|#1|^{2#2}}:}}
\newcommand{\eps}{\varepsilon}

\DeclareMathOperator{\var}{Var}
\DeclareMathOperator{\cov}{Cov}

\newenvironment{rlist}
{

\begin{enumerate}}
{\end{enumerate}}

\title[The increment of the argument for the GEF.]{Fluctuations of the increment of the argument for the Gaussian entire function.}
\thanks{The first author is supported by ISF Grants~~1048/11 and~166/11, by ERC Grant~335141 and by the Raymond and Beverly Sackler Post-Doctoral Scholarship 2013--14. The second author is supported by ISF Grants~166/11 and~382/15.}
\author{Jeremiah Buckley}
\address{Department of Mathematics, King's College London, Strand, London, WC2R 2LS, UK}
\email{jeremiah.buckley@kcl.ac.uk}
\author{Mikhail Sodin}
\address{School of Mathematical Sciences, Tel Aviv University, Tel Aviv 69978, Israel}
\email{sodin@post.tau.ac.il }
\begin{document}
\begin{abstract}
The Gaussian entire function is a random entire function, characterised by a certain invariance with respect to isometries of the plane. We study the fluctuations of the increment of the argument of the Gaussian entire function along planar curves. We introduce an inner product on finite formal linear combinations of curves (with real coefficients), that we call the signed length, which describes the limiting covariance of the increment. We also establish asymptotic normality of fluctuations.
\end{abstract}
\maketitle

Let $(\zeta_n)_{n=0}^\infty$ be a sequence of iid standard complex Gaussian random variables (that is, each $\zeta_n$ has density $\tfrac1\pi e^{-|z|^2}$ with respect to the Lebesgue measure on the plane), and define the Gaussian entire function by
\begin{equation}\label{eq: def GEF}
  f(z)=\sum_{n=0}^\infty \zeta_n \frac{z^n}{\sqrt{n!}}.
\end{equation}
A remarkable feature of this random entire function is the invariance of the distribution of its zero set with respect to isometries of the plane. The invariance of the distribution of $f$ under rotations is obvious, by the invariance of the distribution of each $\zeta_n$. The translation invariance arises from the fact that, for any $w\in\C$, the Gaussian processes $f(z+w)$ and $e^{z\overline{w}+\tfrac12|w|^2}f(z)$ have the same distribution; this follows, for instance, by inspecting the covariances
\begin{align*}
  \E\left[ e^{z_1\overline{w}+\tfrac12|w|^2} f(z_1) e^{\overline{z_2}w+\tfrac12|w|^2} \overline{f(z_2)} \right] &= e^{z_1\overline{z_2}+z_1\overline{w}+\overline{z_2}w+|w|^2}\\
  &= e^{(z_1+w)\overline{(z_2+w)}} = \E\left[ f(z_1+w) \overline{f(z_2+w)} \right].
\end{align*}
Further, by Calabi's rigidity, $f$ is (essentially) the only Gaussian entire function whose zeroes satisfy such an invariance (see \cite{HKPV}*{Chapter 2} for details and further references).

Given a large parameter $R>0$, the function $\log f(Rz)$ gives rise to multi-valued fields with a high intensity of logarithmic branch points, which is somewhat reminiscent of chiral bosonic fields as described by Kang and Makarov \cite{KM}*{Lecture 12}. One way to understand asymptotic fluctuations of these fields as $R\to\infty$ is to study asymptotic fluctuations of the increment of the argument of $f(Rz)$ along a given curve, which will be our concern in this paper. Note that, by the argument principle, if the curve bounds a domain $G$ then this observable coincides with the number of zeroes of $f$ in $RG$ (the dilation of the set $G$), up to a factor $2\pi$ (and a sign change if the curve is negatively oriented with respect to the domain it bounds).

We begin with the following definition.
\begin{defn}
  In what follows a \emph{curve} $\Gamma$ is always a $\mathcal C^1$-smooth regular oriented simple curve in the plane, of finite length\footnote{By finite length we mean finite and positive, we do not consider a single point to be a regular curve.}. An \emph{$\R$-chain} is a finite formal sum $\Gamma = \sum_i a_i \Gamma_i$, where $\Gamma_i$ are curves and the coefficients $a_i$ are real numbers.
\end{defn}
Note that if the coefficients $a_i$ are integer valued, then we can assign an obvious geometric meaning to the formal sum $\Gamma = \sum_i a_i \Gamma_i$.
\begin{defn}
  Given a curve $\Gamma$ and $R>0$ we define $\Delta_{R}(\Gamma)$ to be the random variable given by the increment of the argument of $f(Rz)$ along $\Gamma$. Given an $\R$-chain $\Gamma = \sum_i a_i \Gamma_i$ we define $\Delta_{R}(\Gamma)=\sum_i a_i \Delta_{R}(\Gamma_i)$.
\end{defn}
In order for this definition to make sense, we need to see that almost surely $f$ does not vanish on a fixed curve. Note that the mean number of zeroes in a (measurable) subset of the plane is proportional to the Lebesgue measure of the set. Since the number of zeroes on a fixed curve is a non-negative random variable, whose mean is zero, the required conclusion follows. A quantitative version of this is given by \cite{NSV}*{Lemma 8}.

It is worth pointing out that the observable $\Delta_{R}(\Gamma)$ is invariant with respect to rotations but not with respect to translations. Indeed, since the Gaussian functions $f(z+w)$ and $e^{z\overline{w} + \tfrac12|w|^2} f(z)$ are equidistributed, the observable $\Delta_{R}(\Gamma+w)$ has the same distribution as $\Delta_{R}(\Gamma) + R^2 \im(\overline{w}\int_\Gamma dz)$. Note that the term $R^2 \im(\overline{w}\int_\Gamma dz)$ is not random, and that it vanishes whenever $\Gamma$ is a closed chain. This implies that $\Delta_{R}(\Gamma+w)$ and $\Delta_{R}(\Gamma)$ have the same fluctuations, and furthermore hints that the mean of the random variable $\Delta_{R}(\Gamma)$ should be
\begin{equation}\label{eq:mean}
  \E[ \Delta_{R}(\Gamma)] = R^2 \im\Big( \int_\Gamma \bar{z}\,dz \Big).
\end{equation}
This formula is not difficult to justify, see the beginning of Section~\ref{sec: mean and var}.

We are interested in studying the asymptotic fluctuations of the observable $\Delta_{R}(\Gamma)$, as $R\to\infty$.  In order to understand the limiting covariance of $\Delta_{R}(\Gamma_1)$ and $\Delta_{R}(\Gamma_2)$ we introduce an inner product on $\R$-chains\footnote{Strictly speaking, we introduce an inner product on equivalence classes of $\R$-chains, where we identify two chains if their difference is the zero chain. We shall ignore this issue throughout.}.
\begin{defn}
  Suppose that $\Gamma_1$ and $\Gamma_2$ are curves, whose unit normal vectors are denoted $\hat{n}_1$ and $\hat{n}_2$ respectively. We define the \emph{signed length} of their intersection to be
  \begin{equation*}
    \cL(\Gamma_1,\Gamma_2)=\int_{\C} \ind_{\Gamma_1} \ind_{\Gamma_2} \langle\hat{n}_1,\hat{n}_2\rangle\,d\cH^1
  \end{equation*}
  where $\ind_{\Gamma_1}$ and $\ind_{\Gamma_2}$ are the indicator functions of the supports\footnote{By the support of a curve $\Gamma$ we mean the set $\{\gamma(t):t\in I\}\subset\C$ for a parameterisation $\gamma:I\to\C$ of $\Gamma$.} of the curves $\Gamma_1$ and $\Gamma_2$ respectively, $\langle\cdot,\cdot\rangle$ is the inner product on $\C$ given by the standard inner product on $\R^2$ (we shall frequently identify $\C$ with $\R^2$ without further comment) and $\cH^1$ is the one-dimensional Hausdorff measure. More generally, given $\R$-chains $\Gamma_1 = \sum_i a_i \Gamma_{i,1}$ and $\Gamma_2 = \sum_j b_j \Gamma_{j,2}$ we define
  \begin{equation*}
    \cL(\Gamma_1,\Gamma_2)=\sum_{i,j}a_ib_j\cL(\Gamma_{i,1},\Gamma_{j,2}).
  \end{equation*}
\end{defn}

This definition needs several comments.
\begin{rlist}
  \item If $\gamma_k:I_k\to\R^2$, $k=1,2$, are unit speed parameterisations of the curves $\Gamma_1$ and $\Gamma_2$ then, if $\gamma_1(t_1)\in\mathrm{image}(\gamma_2)$, we define $t_1^*=\tau(t_1)\in I_2$ to be the unique value such that $\gamma_1(t_1)=\gamma_2(t_1^*)$. We then have
      \begin{equation}\label{eq: sign length param}
        \cL(\Gamma_1,\Gamma_2)=\int_{I_1}\int_{I_2} \ind_{\cD}(\gamma_1(t_1),\gamma_2(t_2)) \langle \gamma_1'(t_1), \gamma_2'(t_2) \rangle \,d\delta_{t_1^*}(t_2)dt_1
      \end{equation}
      where $\cD=\{(x,y) \in\R^2 \times\R^2\colon x=y\}$ and $\delta_{s}(t_2)$ is the point mass at $t_2=s$.
  \item Since we deal with $\mathcal C^1$-smooth regular curves, for most of the intersection points of $\Gamma_1$ and $\Gamma_2$ the angle between the curves is either $0$ or $\pi$; there are at most countably many points where this does not hold. This means that in \eqref{eq: sign length param} we can replace the term $\langle \gamma_1'(t_1), \gamma_2'(t_2) \rangle$ by $\alpha(\gamma_1'(t_1), \gamma_2'(t_2))$ where
      \begin{equation*}
        \alpha(x,y)=
        \begin{cases}
          +1&\text{ if }\langle x, y \rangle = |x||y|,\\
          -1&\text{ if }\langle x, y \rangle = -|x||y|,\\
          0&\text{ otherwise},\\
        \end{cases}
      \end{equation*}
      where $|\cdot|$ is the standard Euclidean norm on $\R^2$. In other words, $\cL(\Gamma_1,\Gamma_2)$ indeed measures the signed length of the intersection of the curves $\Gamma_1$ and $\Gamma_2$, see Figure~\ref{fig: sign length alpha}.
\begin{figure}
  \centering
  \includegraphics[width=0.5\columnwidth]{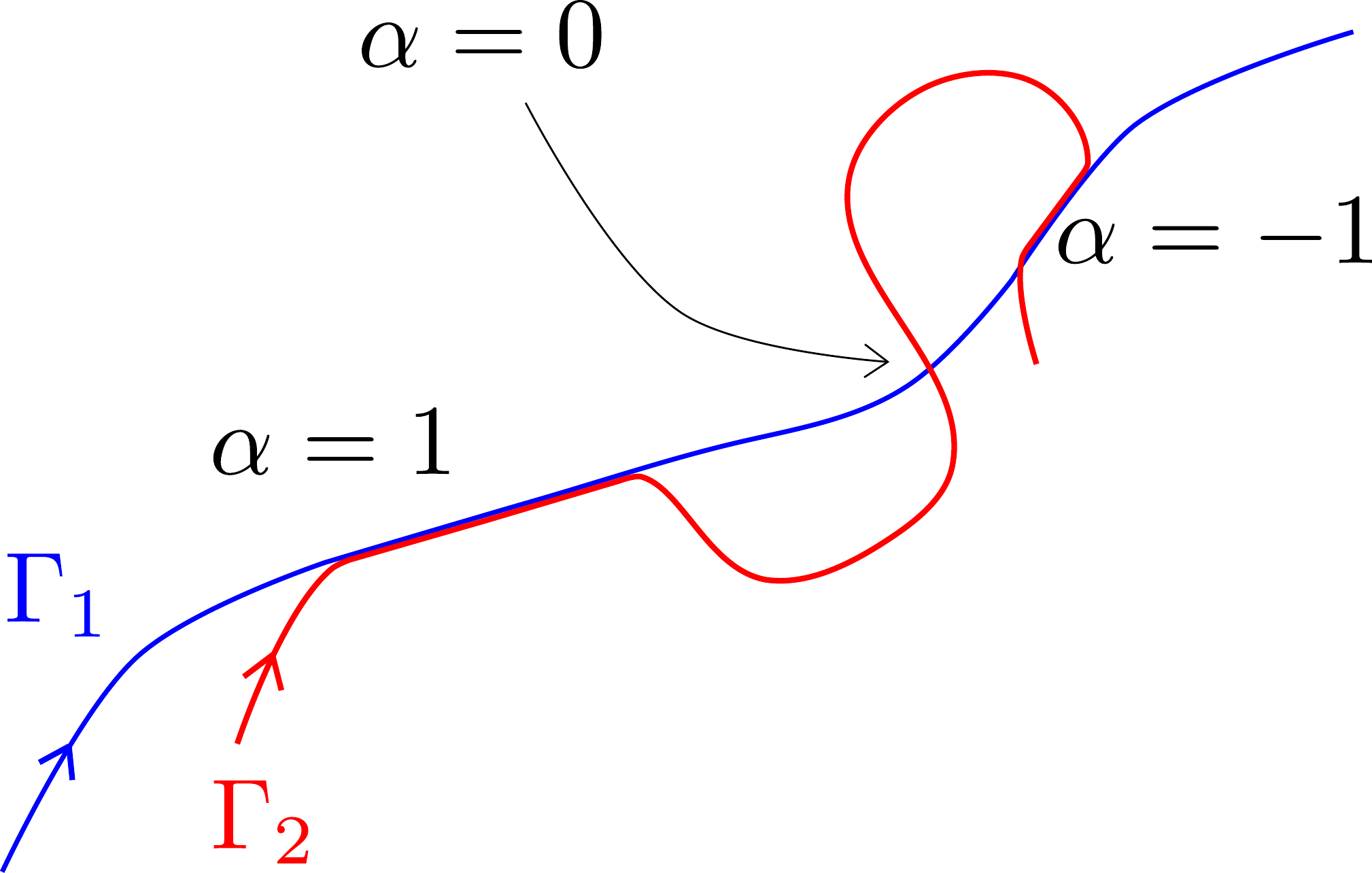}
  \caption{Illustration of the signed length of curves $\Gamma_1$ and $\Gamma_2$, the value of $\alpha(\gamma_1'(t_1), \gamma_2'(t_1^*))$ is indicated at the points of intersection}
  \label{fig: sign length alpha}
\end{figure}
  \item The signed length is a bilinear form on $\R$-chains, that is obviously symmetric. If $\Gamma=\sum_i a_i \Gamma_i$ then the associated quadratic form is
      \begin{equation*}
        \cL(\Gamma,\Gamma)=\int_{\R^2} \Big|\sum_i a_i \ind_{\Gamma_i} \hat{n}_i \Big|^2 \,d\cH^1.
      \end{equation*}
      We see that this quadratic form is non-negative and it vanishes if and only if $\Gamma$ is the zero chain, that is, $\sum_i a_i \ind_{\Gamma_i} \hat{n}_i$ is the zero function in $L^2(\cH^1)$. Thus the signed length defines an inner product on $\R$-chains.\footnote{It might be of some interest to describe the completion of this pre-Hilbert space, though for the purposes of this paper we shall have no need for such a description.}
\end{rlist}

We are ready to state our main result.
\begin{thm}\label{thm: main}
  Let $f$ be the Gaussian entire function \eqref{eq: def GEF}, and let $\Gamma = \sum_i a_i \Gamma_i$ be a non-zero $\R$-chain. Then, as $R\to\infty$,
  \begin{equation}\label{eq:var}
    \var \Delta_{R}(\Gamma)= \left(\frac{\sqrt{\pi}} 2\zeta\left(\frac32\right)+o(1)\right) \cL(\Gamma,\Gamma) R,
  \end{equation}
  where $\zeta$ is the Riemann zeta function, and the random variable
  \begin{equation*}
    \frac{\Delta_{R}(\Gamma)-\E[\Delta_{R}(\Gamma)]}{\sqrt{\var \Delta_{R}(\Gamma)}}
  \end{equation*}
  converges in distribution to the standard (real) Gaussian distribution.
\end{thm}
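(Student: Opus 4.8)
My plan has three stages: reduce the problem to a stationary random field; compute the variance by a localisation argument near the intersection points of the curves; and obtain the central limit theorem from the Wiener chaos expansion together with a fourth-moment argument.

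\emph{Reduction to a stationary field.} Since the processes $f(z+w)$ and $e^{z\overline{w}+|w|^2/2}f(z)$ are equidistributed, the field $\psi(z):=\log|f(z)|-\tfrac{1}{2}|z|^2$ is stationary. Away from the zero set — which almost surely misses any fixed curve — the function $\arg f(Rz)$ is a harmonic conjugate of $\log|f(Rz)|$, so by the Cauchy--Riemann equations the increment of the argument of $f(Rz)$ along $\Gamma_i$ equals $R\int_0^{L_i}\langle(\nabla\log|f|)(R\gamma_i(t)),\hat{n}_i(t)\rangle\,dt$, the flux of $\nabla\log|f|$ through the rescaled curve; here $\gamma_i$ is a unit-speed parameterisation of $\Gamma_i$ and $\hat{n}_i=-i\gamma_i'$. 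Writing $(\nabla\log|f|)(w)=(\nabla\psi)(w)+w$ splits off exactly the deterministic term \eqref{eq:mean} and leaves
\begin{equation*}
  X_R:=\Delta_R(\Gamma)-\E[\Delta_R(\Gamma)]=R\sum_i a_i\int_0^{L_i}\bigl\langle(\nabla\psi)(R\gamma_i(t)),\hat{n}_i(t)\bigr\rangle\,dt ,
\end{equation*}
so everything is governed by the stationary (but non-Gaussian) field $\nabla\psi$.

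\emph{Variance.} The pair $(f(z),f(w))$ is a complex Gaussian vector, hence $\psi(z)$ and $\psi(w)$ are the logarithms of the moduli of two unit-variance complex Gaussians whose correlation has modulus $e^{-|z-w|^2/2}$; the classical formula for the covariance of such logarithms gives $\cov(\psi(z),\psi(w))=\tfrac{1}{4}\dl(e^{-|z-w|^2})=:C(|z-w|)$. Differentiating this radial kernel twice yields an explicit formula for $\cov(\langle\nabla\psi(z),u\rangle,\langle\nabla\psi(w),v\rangle)$ that decays like $e^{-|z-w|^2}$. Substituting this into the double integral for $\var X_R$ and rescaling by $R$ near each point where two of the curves meet, one sees that transversal crossings contribute only $O(1)$ — in fact the relevant integral over all of $\R^2$ vanishes, by the identity $\int_0^\infty(rC'(r))'\,dr=0$ (note $rC'(r)\to0$ at both ends) — whereas a shared sub-arc of $\Gamma_i$ and $\Gamma_j$ contributes $a_ia_j R$ times its signed length times the one-dimensional constant $-\int_{\R}C'(|u|)/|u|\,du$. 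Since $C'(u)/u=\tfrac{1}{2}\log(1-e^{-u^2})$, this constant equals $-\int_0^\infty\log(1-e^{-u^2})\,du=\sum_{k\ge1}\tfrac{1}{k}\int_0^\infty e^{-ku^2}\,du=\tfrac{\sqrt{\pi}}{2}\zeta(\tfrac{3}{2})$, which is \eqref{eq:var}. The at most countably many isolated tangential contacts are seen to contribute $o(R)$ using only $\mathcal C^1$-regularity and finite length.

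\emph{Asymptotic normality.} Because $|f|$ is invariant under $f\mapsto e^{i\theta}f$, the Wiener chaos expansion of $\psi$ with respect to the Gaussian $f$ contains only even chaoses, $\psi=\psi_2+\psi_4+\cdots$, with $\cov(\psi_{2n}(z),\psi_{2n}(w))=\tfrac{1}{4n^2}e^{-n|z-w|^2}$, the $n$-th summand of $\tfrac{1}{4}\dl(e^{-r^2})=\sum_{n\ge1}\tfrac{1}{4n^2}e^{-nr^2}$. Correspondingly $X_R=\sum_{n\ge1}X_R^{(2n)}$ with $X_R^{(2n)}$ in the $2n$-th chaos, and the computation above gives $\var X_R^{(2n)}=\bigl(\tfrac{\sqrt{\pi}}{2n^{3/2}}+o(1)\bigr)\cL(\Gamma,\Gamma)R$ — so every chaos contributes at the leading order $R$ and all of them must be controlled. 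For each fixed $N$ I would prove joint asymptotic normality of $R^{-1/2}\bigl(X_R^{(2)},\dots,X_R^{(2N)}\bigr)$ via the multivariate fourth-moment theorem: the covariances converge by the variance analysis, while the fourth cumulant of each $X_R^{(2n)}$ should be $O(R)=o\bigl((\var X_R^{(2n)})^2\bigr)$ because the kernels decay like a Gaussian whereas the rescaled curve $R\Gamma$ has length $\asymp R$ (the usual ``boundary beats volume'' gain). Since $\sum_{n>N}\var X_R^{(2n)}\asymp R\sum_{n>N}n^{-3/2}$ is negligible relative to $\var X_R$ once $N$ is large, a routine approximation argument then upgrades this to convergence of $X_R/\sqrt{\var X_R}$ to the standard Gaussian. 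I expect this last stage to be the main obstacle: bounding the fourth cumulants — equivalently, the contraction norms $\|f_R\otimes_r f_R\|$ — of each chaos component and controlling the chaos tail requires turning the Gaussian decay of the kernels into genuine integrability over configurations of points on the rescaled curves, uniformly in $R$; a secondary nuisance is the rigorous local analysis near the intersection points, i.e.\ making the ``straighten the curves'' heuristic precise with only $\mathcal C^1$ smoothness and disposing of the tangential contacts.
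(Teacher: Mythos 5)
Your plan is sound and, for the variance, it coincides with the paper's argument: both reduce $\Delta_R(\Gamma_i)-\E[\Delta_R(\Gamma_i)]$ to the flux of $\nabla\log|\widehat{f}_R|$ through $\Gamma_i$, invoke the dilogarithm covariance $\tfrac14\dl(e^{-|z-w|^2})$ of the log-modulus field, and localise the resulting double integral to $\Gamma_i\cap\Gamma_j$ by Laplace-type estimates; your constant $-\int_0^\infty\log(1-e^{-u^2})\,du=\tfrac{\sqrt\pi}2\zeta(\tfrac32)$ is exactly the paper's $\tfrac{\sqrt\pi}2\sum_\alpha\alpha^{-3/2}$ obtained by summing the termwise asymptotics of Lemma~\ref{cl:asym of int}. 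For asymptotic normality you diverge: you propose the multivariate fourth-moment theorem chaos-by-chaos plus a tail truncation, whereas the paper truncates the chaos expansion at level $m$ (with the same $L^2$ tail bound $R\sum_{\alpha>m}\alpha^{-3/2}$, cf.\ \eqref{eq: L2 approx for CLT}) and then runs the full method of moments via the diagram formula, separating regular diagrams (which reproduce the Gaussian moments) from irregular ones. The paper explicitly remarks that your route should work, but note that the contraction/fourth-cumulant bounds you defer are not actually avoided by choosing it: they are precisely the paper's estimate \eqref{eq: simple bound irr diag} for connected (irregular) diagrams, proved by passing to a spanning tree and extracting a factor $R^{-1}$ per edge from the Gaussian kernels restricted to the curves. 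So the two approaches need essentially the same analytic input, with yours additionally importing the Nualart--Peccati machinery; what the diagram method buys is self-containedness at the cost of heavier notation. Two further points you flag but do not carry out are genuinely needed and occupy a substantial part of the paper: the interchange of normal derivatives with expectations for the singular field $\nabla\log|f_R|$ (Lemmas~\ref{lem: abstract interchange}--\ref{lem: hypoth sat 2 logs}, resting on $p$-th moment bounds for $f_R'/f_R$ with $p<2$), and the Laplace localisation with only $\mathcal C^1$ regularity, where $\Gamma_i\cap\Gamma_j$ can be an arbitrary closed set and the near-intersection region $\{0<d(z_i,\Gamma_j)<\eps_R\}$ and the curve endpoints must be handled separately. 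Your observation that transversal crossings contribute exactly zero in the idealised straight-line computation is correct but unnecessary; the paper only uses that their neighbourhoods contribute $o(R)$.
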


Less formally our result says that the observables $\Delta_{R}(\Gamma)$ have a scaling limit which is a Gaussian field built on the linear space of $\R$-chains equipped with the inner product defined by the signed length.

It is worth singling out a special case of Theorem~\ref{thm: main}, when each $\Gamma_i$ is the positively oriented boundary of a bounded domain $G_i$. In this case
\begin{equation*}
  \Delta_R(\Gamma) = 2\pi \sum_i a_i n_R(G_i)
\end{equation*}
where $n_R(G_i)$ is the number of zeroes of the entire function $f$ in the domain $RG_i$, the homothety of $G_i$ with scaling factor $R$. Here the Gaussian scaling limit is built on finite linear combinations $\sum_i a_i \ind_{G_i}$ and the limiting covariance of $n_R(G_i)$ and $n_R(G_j)$ is proportional to the signed length of $\partial G_i \cap \partial G_j$. Note that the same scaling limit appears in a physics paper of Lebowitz \cite{Leb} which deals with fluctuations of classical Coulomb systems.

The Gaussian scaling limit described in this special case corresponds to high-frequency fluctuations of linear statistics of the zero set of the Gaussian entire function $f$. For low frequencies the limiting Gaussian field is built on the Sobolev space $W_2^2$, which consists of $L^2$-functions whose weak Laplacian also belongs to $L^2$. This scaling limit was described in \cite{ST1}, see also \cite{NS}. The co-existence of different scaling limits of linear statistics, with different scaling exponents, is a curious feature of the zeroes of the Gaussian entire function. We expect that a similar phenomenon should arise in other natural homogeneous point processes with suppressed fluctuations (so-called superhomogeneous point processes).

Our work also has a one-dimensional analogue. The natural analogue of a curve in one dimension is the boundary of a finite interval and we attach a unit ``normal'' vector to each of the two end-points in the following manner: We say the interval is positively oriented if the normals are inward-pointing, that is, the normal on the left end-point points right, and the normal on the right end-point points left. Otherwise the interval is negatively oriented and the normals point in the opposite directions. Given two such boundaries $\partial I$ and $\partial J$, denoting the respective normals $\hat{n}_I$ and $\hat{n}_J$, we define an inner product by
\begin{equation*}
  \inprod{\partial I}{\partial J} = \frac12\int_{\R} \ind_{\partial I} \ind_{\partial J} \langle\hat{n}_I,\hat{n}_J\rangle\,d\cH^0
\end{equation*}
where $\cH^0$ is the (Hausdorff) counting measure, in analogy with the signed length (and we include the factor $\tfrac12$ to agree exactly with the results cited below). Given an ordered pair of distinct real numbers $(s,t)$, we identify the pair with the boundary of an interval which is positively oriented if $s<t$ and negatively oriented if $s>t$. The corresponding inner product is then
\begin{equation*}
  \inprod{(s,t)}{(s',t')} =
  \begin{cases}
    1&\text{ if } s=s'\text{ and }t=t'\\
    -1&\text{ if } s=t'\text{ and }t=s'\\
    \frac12&\text{ if } s=s'\text{ or }t=t'\text{ but not both}\\
    -\frac12&\text{ if } s=t'\text{ or }t=s'\text{ but not both}\\
    0&\text{ otherwise.}
  \end{cases}
\end{equation*}
This inner product appears as a limiting covariance in Gaussian limit theorems for eigenvalues of random unitary matrices \citelist{\cite{DE}*{Theorem 6.1} \cite{HKO}*{Theorem 2.2} \cite{W}*{Theorem 1}} and the logarithm of the Riemann zeta function on the critical line \cite{HNY}*{Theorem 1 and Section 2}.

We end this introduction with a brief discussion of the proof of Theorem~\ref{thm: main}. We follow the scheme developed in \cite{ST1}. The proof of the asymptotic \eqref{eq:var}, after some preliminaries, boils down to Laplace-type asymptotic evaluation of certain integrals. The proof of asymptotic normality uses the method of moments, and these moments are estimated using a combinatorial argument based on the diagram method. As often happens the devil is in the details: numerous difficulties\footnote{Note that somewhat similar difficulties were encountered by Montgomery in his study of discrepancies of uniformly distributed points \cite{Mon}*{Chapter 6, Theorem 3}.} arise from the fact that we cannot say much about the intersection of two ``nice'' curves other than that it is a one-dimensional compact subset of the plane. For example, if $\gamma_1(t)=t$ for $0\leq t\leq1$ and $\gamma_2(t)$ is an arbitrary $\cC^\infty$ $\C$-valued function on $[0,1]$, then the intersection of the corresponding curves can be an arbitrary closed subset of $[0,1]$. We also mention that it seems likely that one may apply the Fourth Moment Theorem of Peccati and Tudor \cite{PT4mom}*{Proposition 1} to see asymptotic normality, similar to \cite{MPRW}. We have not pursued this since, in our case, computing higher moments only introduces difficulties at the level of notation, and we do not think this a sufficient reason to employ such powerful machinery which relies on deep results from \cite{NP}.

Finally, a word on notation. We write $f\lesssim g$ to mean that $f\leq Cg$ for some constant $C$, which may depend on certain fixed parameters. If $f\lesssim g$ and $g\lesssim f$ then we write $f\simeq g$. We write $f=O(g)$ if $|f|\lesssim g$. We write $f=o(g)$ if $\tfrac fg\to0$ as $R\to\infty$. We write $f\sim g$ if $\tfrac fg\to1$ as $R\to\infty$.

\subsection*{Acknowledgements}
The authors thank Fedor Nazarov for a helpful discussion of the subtleties of the Laplace method, and Alexander Borichev and Nikolai Makarov for several useful conversations.

\section{Preliminary lemmas}
\subsection{Some elementary Gaussian estimates}
Suppose that $\zeta$ is a standard complex Gaussian random variable. Then a routine computation shows that for $p>-2$
\begin{equation}\label{eq: p mom 1 normal}
  \E[|\zeta|^p]=\Gamma(1+\tfrac p2),
\end{equation}
where $\Gamma$ is the Euler gamma function. An immediate consequence of \eqref{eq: p mom 1 normal} is the following.
\begin{lemma}\label{lem: poly of normal}
  Let $\zeta$ be a complex Gaussian random variable and let $Q$ be a polynomial. Then, for $1\leq p<+\infty$,
  \begin{equation*}
    \E\left[\left|Q\left(|\zeta|^2\right)\right|^p\right]<+\infty.
  \end{equation*}
\end{lemma}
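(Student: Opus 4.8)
The plan is to reduce the claim to the finiteness of a single absolute moment of $\zeta$, which is supplied by \eqref{eq: p mom 1 normal}. First I would record the trivial growth bound on polynomials: if $Q$ has degree $d$, then there is a constant $C=C(Q)$ with $|Q(x)|\le C(1+|x|^d)$ for every $x\in\R$. Raising to the power $p\ge 1$ and using the elementary convexity inequality $(a+b)^p\le 2^{p-1}(a^p+b^p)$ gives
\[
  |Q(x)|^p \le 2^{p-1}C^p\bigl(1+|x|^{dp}\bigr), \qquad x\in\R.
\]

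Next I would substitute $x=|\zeta|^2$ and take expectations, which yields
\[
  \E\bigl[\,|Q(|\zeta|^2)|^p\,\bigr] \le 2^{p-1}C^p\bigl(1+\E[|\zeta|^{2dp}]\bigr),
\]
so that everything comes down to checking $\E[|\zeta|^{2dp}]<+\infty$. When $\zeta$ is a standard complex Gaussian this is immediate from \eqref{eq: p mom 1 normal}, which gives $\E[|\zeta|^{2dp}]=\Gamma(1+dp)<+\infty$. For a general complex Gaussian one reduces to the standard case: writing $\zeta=\xi+i\eta$ with $\xi,\eta$ the (jointly Gaussian) real and imaginary parts of $\zeta$, one has $|\zeta|^{2dp}\le 2^{dp}(\xi^{2dp}+\eta^{2dp})$, and each summand has finite expectation because a real Gaussian has finite absolute moments of every order (its density decays like $e^{-cx^2}$).

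There is no genuine obstacle here: the lemma is a bookkeeping consequence of the fact that Gaussian random variables possess finite absolute moments of all orders, combined with the monomial domination of $Q$. The only step worth stating with any care is the passage from $Q$ to the dominating power $|x|^{dp}$, after which \eqref{eq: p mom 1 normal} does all the work.
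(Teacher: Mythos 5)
Your proof is correct and follows exactly the route the paper intends: the paper gives no details beyond declaring the lemma "an immediate consequence" of \eqref{eq: p mom 1 normal}, and your argument—dominating $Q$ by a monomial and invoking finiteness of all Gaussian moments—is precisely that consequence spelled out. The extra care you take for a non-standard complex Gaussian (reducing to the real and imaginary parts) is a reasonable touch the paper leaves implicit.
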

The next lemma is also a simple consequence of \eqref{eq: p mom 1 normal}.
\begin{lemma}\label{lem: ratio normals}
  Let $\zeta_1$ and $\zeta_2$ be complex Gaussian random variables with $\E[|\zeta_2|^2]>0$, and let $1\leq p <2$. Then
  \begin{equation*}
    \E\left[\Abs{\frac{\zeta_1}{\zeta_2}}^p\right]<+\infty.
  \end{equation*}
\end{lemma}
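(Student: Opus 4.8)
The only possible source of divergence here is the singularity of $|\zeta_2|^{-p}$ near $\zeta_2=0$: since $\E[|\zeta_2|^2]>0$ the variable $\zeta_2$ is non-degenerate, hence $\zeta_2\ne 0$ almost surely, the modulus $|\zeta_2|$ has a density that is $O(r)$ near $r=0$, and so the hypothesis $p<2$ is precisely what guarantees $\E[|\zeta_2|^{-p}]<+\infty$. The genuine issue is that $\zeta_1$ and $\zeta_2$ may be strongly dependent, so one cannot simply factor the expectation. The plan is to decouple the two factors with H\"older's inequality, at the cost of slightly enlarging the exponents; this is where the gap between $p$ and $2$ gets spent.

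Concretely, I would fix a conjugate pair $q,q'>1$ with $\tfrac1q+\tfrac1{q'}=1$ and $q'<2/p$ (possible since $p<2$), so that $pq'<2$. Then
\begin{equation*}
  \E\left[\Abs{\frac{\zeta_1}{\zeta_2}}^p\right]=\E\left[|\zeta_1|^p\,|\zeta_2|^{-p}\right]\le \left(\E\left[|\zeta_1|^{pq}\right]\right)^{1/q}\left(\E\left[|\zeta_2|^{-pq'}\right]\right)^{1/q'}.
\end{equation*}
The first factor is finite because every positive moment of a complex Gaussian is finite, by \eqref{eq: p mom 1 normal} (here $pq>0>-2$). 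For the second factor, write $\zeta_2=\sigma\tilde\zeta_2$ with $\sigma^2=\E[|\zeta_2|^2]>0$ and $\tilde\zeta_2$ a standard complex Gaussian; since $-pq'>-2$, \eqref{eq: p mom 1 normal} gives $\E[|\zeta_2|^{-pq'}]=\sigma^{-pq'}\Gamma\!\left(1-\tfrac{pq'}{2}\right)<+\infty$. This yields the bound.

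I do not anticipate any real obstacle; the one point worth stressing is that the joint law of $(\zeta_1,\zeta_2)$ plays no role, since H\"older's inequality is insensitive to it. If a more hands-on argument is preferred (one that does use joint Gaussianity), one can instead perform the Gaussian regression $\zeta_1=\lambda\zeta_2+\zeta_3$ with $\zeta_3$ independent of $\zeta_2$, bound $|\zeta_1|^p\le 2^{p-1}\bigl(|\lambda|^p|\zeta_2|^p+|\zeta_3|^p\bigr)$, and reduce by independence to $\E[|\zeta_3|^p]\,\E[|\zeta_2|^{-p}]<+\infty$ together with \eqref{eq: p mom 1 normal}; this gives nothing extra, so I would record the H\"older proof.
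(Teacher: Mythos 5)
Your proof is correct and is essentially the paper's own argument: both apply H\"older's inequality to decouple $|\zeta_1|^p$ from $|\zeta_2|^{-p}$, spending the gap between $p$ and $2$ so that the negative moment of $\zeta_2$ has exponent strictly greater than $-2$ and is therefore finite by \eqref{eq: p mom 1 normal}. The only difference is the cosmetic one of which H\"older exponent is attached to which factor.
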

\begin{proof}
  If $1<q< \tfrac2p$ and $q'$ is the H\"{o}lder conjugate of $q$ (i.e., $\tfrac1q + \tfrac1{q'}=1$), we have
  \begin{equation*}
    \E\left[\Abs{\frac{\zeta_1}{\zeta_2}}^p\right]\leq\E[|\zeta_1|^{pq'}]^{\tfrac 1{q'}}\E[|\zeta_2|^{-pq}]^{\tfrac 1{q}}<+\infty.\qedhere
  \end{equation*}
\end{proof}
The next lemma is given as an exercise in Kahane's celebrated book, for the reader's convenience we provide a proof.
\begin{lemma}[\cite{Kah}*{Chapter 12, Section 8, Exercise 3}]\label{lem: Kahane mean}
  Let $\zeta_1$ and $\zeta_2$ be jointly (complex) Gaussian random variables, with $\E[|\zeta_2|^2]\neq0$. Then
  \begin{equation*}
    \E\left[\frac{\zeta_1}{\zeta_2}\right]=\frac{\E[\zeta_1\overline{\zeta_2}]}{\E[|\zeta_2|^2]}.
  \end{equation*}
\end{lemma}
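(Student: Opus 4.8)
The plan is to use the classical Gaussian regression trick. First I would set
\[
  a := \frac{\E[\zeta_1\overline{\zeta_2}]}{\E[|\zeta_2|^2]}, \qquad \eta := \zeta_1 - a\zeta_2,
\]
the point being that $\E[\eta\overline{\zeta_2}] = \E[\zeta_1\overline{\zeta_2}] - a\,\E[|\zeta_2|^2] = 0$ by the very choice of $a$. Since $(\eta,\zeta_2)$ is jointly complex Gaussian (a linear image of $(\zeta_1,\zeta_2)$) and our complex Gaussians are centred and circularly symmetric — so that the pseudo-covariances $\E[\eta\zeta_2]$ and $\E[\zeta_2^2]$ vanish automatically — the vanishing of $\E[\eta\overline{\zeta_2}]$ forces $\eta$ and $\zeta_2$ to be \emph{independent}.

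Next I would record the integrability facts needed to manipulate the ratios. Because $\E[|\zeta_2|^2]>0$, the variable $\zeta_2$ is a non-degenerate complex Gaussian, so it has a density, $\zeta_2\neq 0$ almost surely, and $\zeta_1/\zeta_2$, $\eta/\zeta_2$, $1/\zeta_2$ all make sense a.s. By Lemma~\ref{lem: ratio normals} (with $p=1$) we have $\E[|\zeta_1/\zeta_2|]<+\infty$, and likewise $\E[|1/\zeta_2|]<+\infty$ — either by applying the same lemma with a constant (degenerate) numerator, or directly from \eqref{eq: p mom 1 normal} after writing $\zeta_2 = \sqrt{\E[|\zeta_2|^2]}\,\xi$ with $\xi$ standard complex Gaussian, which gives $\E[|\zeta_2|^{-1}] = \E[|\zeta_2|^2]^{-1/2}\,\Gamma(\tfrac12)<+\infty$. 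Since $\eta$ and $1/\zeta_2$ are independent and both integrable, their product is integrable and $\E[\eta/\zeta_2]=\E[\eta]\,\E[1/\zeta_2]$.

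The computation is then immediate:
\[
  \E\left[\frac{\zeta_1}{\zeta_2}\right] = \E\left[\frac{a\zeta_2 + \eta}{\zeta_2}\right] = a + \E\left[\frac{\eta}{\zeta_2}\right] = a + \E[\eta]\,\E\left[\frac1{\zeta_2}\right] = a,
\]
using $\zeta_2/\zeta_2 = 1$ almost surely, the independence and integrability just noted, and $\E[\eta]=0$ (as $\eta$ is centred). This is the claimed identity. The only place that deserves care — the ``main obstacle'', such as it is — is the implication ``$\E[\eta\overline{\zeta_2}]=0 \Rightarrow \eta \perp \zeta_2$'', which genuinely relies on our standing convention that the complex Gaussians are centred and circularly symmetric (so that the joint law of $(\eta,\zeta_2)$ is pinned down by $\E[\eta\overline{\zeta_2}]$ alone); everything else is bookkeeping, with the integrability of the ratios supplied by Lemma~\ref{lem: ratio normals}.
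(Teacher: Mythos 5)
Your proof is correct and is essentially the paper's argument in different clothing: the paper's representation $(\zeta_1,\zeta_2)\overset{d}{=}(\alpha Z_1+\beta Z_2,\gamma Z_1)$ is exactly the regression decomposition $\zeta_1=a\zeta_2+\eta$ with $\eta$ independent of $\zeta_2$, and both proofs then kill the residual term using independence together with integrability of $1/\zeta_2$. No further comment needed.
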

\begin{proof}
  Let $Z_1, Z_2$ be two i.i.d. $\cN_\C(0,1)$ random variables. Since $\zeta_1, \zeta_2$ are jointly Gaussian, there are $\alpha,\beta, \gamma \in \C$ such that the pair $(\zeta_1,\zeta_2)$ has the same distribution as $(\alpha Z_1+\beta Z_2,\gamma Z_1)$. In particular,
  \begin{equation*}
    \frac{\zeta_1}{\zeta_2} \overset{d}{=}  \frac{\alpha}{\gamma} + \frac{\beta}{\gamma} \frac {Z_2}{Z_1}.
  \end{equation*}
  Taking expectation, and recalling that $\E\left[\frac {Z_2}{Z_1} \right] = \E[ Z_2 ]\ \E\left[\frac 1{Z_1}\right] = 0$, we get
  \begin{equation*}
    \E\left[  \frac{\zeta_1}{\zeta_2} \right]=  \frac{\alpha}{\gamma}.
  \end{equation*}
  All that remains is to note that $\E[\zeta_1\overline{\zeta_2}]= \alpha \overline{\gamma}$ and that $\E[|\zeta_2|^2]=|\gamma|^2$.
\end{proof}
\begin{lemma}[\cite{Feld}*{Lemma B.2}]\label{lem: -pth mom 2 normals}
  Let $\zeta_1$ and $\zeta_2$ be $\mathcal{N}_\C(0,1)$ random variables with $\E[\zeta_1\bar{\zeta_2}]=\theta$ and suppose that $|\theta|\geq c>0$ and  $1\leq p<2$. Then
\begin{equation*}
  \E[|\zeta_1\zeta_2|^{-p}]\leq C(p,c) (1-|\theta|^2)^{1-p}.
\end{equation*}
\end{lemma}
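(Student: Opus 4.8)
The plan is to reduce the bivariate problem to an explicit double integral and then extract the singularity as $|\theta|\to 1$. First I would normalise: writing $\zeta_1,\zeta_2$ as a jointly Gaussian pair with $\E|\zeta_1|^2=\E|\zeta_2|^2=1$ and $\E[\zeta_1\overline{\zeta_2}]=\theta$, I may assume $\theta\in[c,1)$ is real and positive after multiplying $\zeta_2$ by a unimodular constant (which does not change $|\zeta_1\zeta_2|$). Then $(\zeta_1,\zeta_2)$ can be realised as $\zeta_1=Z_1$, $\zeta_2=\theta Z_1+\sqrt{1-\theta^2}\,Z_2$ with $Z_1,Z_2$ i.i.d.\ standard complex Gaussians, or — more symmetrically — the joint density of $(\zeta_1,\zeta_2)\in\C^2$ is
\begin{equation*}
  p(w_1,w_2)=\frac{1}{\pi^2(1-\theta^2)}\exp\!\left(-\frac{|w_1|^2+|w_2|^2-2\theta\,\operatorname{Re}(w_1\overline{w_2})}{1-\theta^2}\right).
\end{equation*}
So $\E[|\zeta_1\zeta_2|^{-p}]=\frac{1}{\pi^2(1-\theta^2)}\int_{\C^2}|w_1|^{-p}|w_2|^{-p}\exp(-\dots)\,dA(w_1)\,dA(w_2)$.

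Next I would pass to polar coordinates $w_j=r_j e^{i\phi_j}$ and integrate out the angular variables: only $\phi_1-\phi_2$ enters the exponent, and the angular integral produces a modified Bessel function $I_0\!\left(\tfrac{2\theta r_1 r_2}{1-\theta^2}\right)$. This leaves
\begin{equation*}
  \E[|\zeta_1\zeta_2|^{-p}]=\frac{4}{1-\theta^2}\int_0^\infty\!\!\int_0^\infty r_1^{1-p}r_2^{1-p}\,e^{-\frac{r_1^2+r_2^2}{1-\theta^2}}\,I_0\!\left(\frac{2\theta r_1 r_2}{1-\theta^2}\right)dr_1\,dr_2.
\end{equation*}
Then I would rescale $r_j=\sqrt{1-\theta^2}\,s_j$, which pulls out a factor $(1-\theta^2)^{2-p}$ from the $r_1^{1-p}r_2^{1-p}dr_1dr_2$ and, together with the prefactor $\tfrac{1}{1-\theta^2}$, gives exactly $(1-\theta^2)^{1-p}$ times
\begin{equation*}
  J(\theta):=4\int_0^\infty\!\!\int_0^\infty s_1^{1-p}s_2^{1-p}\,e^{-(s_1^2+s_2^2)}\,I_0(2\theta s_1 s_2)\,ds_1\,ds_2.
\end{equation*}
So it remains to show $J(\theta)\le C(p,c)$ uniformly for $\theta\in[c,1)$. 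Since $J$ is continuous in $\theta$ on $[0,1]$ (dominated convergence, using $I_0(x)\le e^{x}$ and $s_1^{1-p}s_2^{1-p}e^{-(s_1-s_2)^2}\in L^1$ because $p<2$), it suffices to check $J(1)<\infty$, i.e.\ $\int\!\!\int s_1^{1-p}s_2^{1-p}e^{-(s_1-s_2)^2}\,ds_1ds_2<\infty$; this converges near the origin because $2(1-p)>-2$ when $p<2$, and at infinity because of the Gaussian decay transverse to the diagonal together with the polynomial weight. Taking the supremum of $J$ over $[c,1]$ yields the claimed bound.

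The main obstacle is purely bookkeeping: being careful that the angular integration and the rescaling produce precisely the power $(1-\theta^2)^{1-p}$ and that the residual integral $J(\theta)$ has no hidden $\theta$-dependence that blows up — which is exactly why one must keep the Bessel function $I_0$ rather than crudely bound it before rescaling, since $I_0(2\theta s_1 s_2)\le e^{2s_1 s_2}$ combined with $e^{-(s_1^2+s_2^2)}=e^{-(s_1-s_2)^2}e^{-2s_1s_2}$ shows the integrand is dominated uniformly in $\theta$. Everything else (the Fubini steps, integrability at $0$ and $\infty$) is routine given $1\le p<2$. Alternatively, one could avoid Bessel functions entirely by using the representation $\zeta_2=\theta\zeta_1+\sqrt{1-\theta^2}\,Z$ with $Z$ independent of $\zeta_1$, conditioning on $\zeta_1$, and applying Lemma~\ref{lem: ratio normals}-type bounds to $\E_Z|\theta\zeta_1+\sqrt{1-\theta^2}Z|^{-p}\lesssim (1-\theta^2)^{-p/2}\,\E_Z|w+Z|^{-p}$ uniformly in the shift $w$ — but tracking the uniformity in $w$ of $\E|w+Z|^{-p}$ and then integrating against $|\zeta_1|^{-p}$ is essentially the same computation in disguise, so I would present the direct integral route.
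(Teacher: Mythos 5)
The paper does not actually prove this lemma --- it quotes it from Feldheim (Lemma~B.2) --- so there is no internal proof to compare against; I can only assess your argument on its own terms. Your reduction is correct and is in fact an identity: after normalising $\theta\in[c,1)$, passing to polar coordinates and rescaling, one gets exactly $\E[|\zeta_1\zeta_2|^{-p}]=(1-\theta^2)^{1-p}J(\theta)$ with your $J$, and since $I_0$ is increasing, $J(\theta)\le J(1)$, so everything hinges on $J(1)<\infty$. That is where the gap is. Your dominating function $s_1^{1-p}s_2^{1-p}e^{-(s_1-s_2)^2}$ is \emph{not} in $L^1((0,\infty)^2)$ for $p\le 3/2$: integrating out $s_1-s_2$ leaves a weight $\simeq s^{2(1-p)}$ along the diagonal, and $\int^\infty s^{2(1-p)}\,ds$ diverges unless $p>3/2$. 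So the crude bound $I_0(x)\le e^x$ does not suffice on a large part of the claimed range; you must keep the true asymptotics $I_0(x)\sim e^x/\sqrt{2\pi x}$, which improves the diagonal weight to $s^{1-2p}$ and gives $J(1)<\infty$ precisely when $p>1$.

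At the endpoint $p=1$ the problem is not fixable, because $J(1)=+\infty$ and the stated inequality is actually false there: expanding $I_0$ in its Taylor series gives the exact formula
\begin{equation*}
  \E[|\zeta_1\zeta_2|^{-p}]=(1-\theta^2)^{1-p}\sum_{k\ge0}\frac{\Gamma(1+k-\tfrac p2)^2}{(k!)^2}\,\theta^{2k},
\end{equation*}
whose terms behave like $\theta^{2k}k^{-p}$; for $p=1$ the sum is $\simeq\log\frac{1}{1-\theta^2}$, so $\E[|\zeta_1\zeta_2|^{-1}]$ blows up logarithmically as $|\theta|\to1$ rather than staying bounded by $C(1,c)(1-|\theta|^2)^{0}$. (This series also shows that for $1<p<2$ the constant can be taken to depend on $p$ alone, so the hypothesis $|\theta|\ge c$ is not needed there.) None of this damages the paper: the lemma is only ever invoked with exponent $pq$ where $1\le p<2$ and $q>1$, hence with an exponent strictly between $1$ and $2$. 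But your write-up should either restrict to $1<p<2$ and use the sharp Bessel decay, or, at $p=1$, replace the right-hand side by a logarithm.
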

\begin{rem}
  If $|\theta|=1$ then the expectation is divergent, even for $p=1$.
\end{rem}

\subsection{Gradients}
For convenience we write $f_R(z)=f(Rz)$, $K_R(z,w) = K(Rz,Rw)=e^{R^2z\overline{w}}$ and define
\begin{equation*}
  \widehat{f}_R(z)=\frac{f_R(z)}{\sqrt{K_R(z,z)}}
\end{equation*}
and note that $\widehat{f}_R(z)$ is a $\mathcal{N}_\C(0,1)$ random variable that satisfies
\begin{equation*}
  \widehat{K}_R(z,w)=\E[\widehat{f}_R(z)\overline{\widehat{f}_R(w)}]=\frac{K_R(z,w)}{\sqrt{K_R(z,z)K_R(w,w)}}.
\end{equation*}
Furthermore $|\widehat{K}_R(z,w)|=e^{-R^2|z-w|^2/2}$. To simplify our notation, we define 
\begin{equation*}
g_R(z)=\big|\widehat{f}_R(z)\big|^2=\Abs{f_R(z)}^2e^{-R^2|z|^2}.
\end{equation*}
The next lemma will be important later.
\begin{lemma}\label{lem: grad f^}
  Given a compact $K$ and $1\leq p <+\infty$, we have
  \begin{equation*}
    \E\big[ \big|\nabla g_R(z)\big|^p \big] \leq C(p,K,R)
  \end{equation*}
  for all $z\in K$.
\end{lemma}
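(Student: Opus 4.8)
The plan is to differentiate $g_R$ explicitly and then reduce the problem to moments of complex Gaussian random variables, which are controlled by \eqref{eq: p mom 1 normal}. Write $f_R'(z) = R f'(Rz)$ for the derivative of $z \mapsto f(Rz)$. A direct Wirtinger computation, using $\partial_z \overline{f_R} = 0$ and $\partial_z e^{-R^2 z\bar z} = -R^2 \bar z\, e^{-R^2 z\bar z}$, gives
\[
  \partial_z g_R(z) = e^{-R^2|z|^2}\,\overline{f_R(z)}\,\bigl(f_R'(z) - R^2\bar z\, f_R(z)\bigr),
\]
and, since $g_R$ is real-valued, $|\nabla g_R(z)| = 2\,|\partial_z g_R(z)|$. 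Hence
\[
  \E\bigl[|\nabla g_R(z)|^p\bigr] = 2^p\, e^{-pR^2|z|^2}\,\E\Bigl[|f_R(z)|^p\,\bigl|f_R'(z) - R^2\bar z\, f_R(z)\bigr|^p\Bigr].
\]

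Next I would apply the Cauchy--Schwarz inequality to separate the two factors,
\[
  \E\bigl[|\nabla g_R(z)|^p\bigr] \le 2^p\, e^{-pR^2|z|^2}\,\E\bigl[|f_R(z)|^{2p}\bigr]^{1/2}\,\E\bigl[\bigl|f_R'(z) - R^2\bar z\, f_R(z)\bigr|^{2p}\bigr]^{1/2}.
\]
Both $f_R(z)$ and $f_R'(z) - R^2\bar z\, f_R(z)$ are (mean-zero) complex Gaussian random variables: each is a convergent linear combination of the i.i.d.\ $\zeta_n$ with deterministic coefficients, hence is $\mathcal N_\C(0,\sigma^2)$ for the appropriate variance, and by \eqref{eq: p mom 1 normal} the $2p$-th absolute moment of an $\mathcal N_\C(0,\sigma^2)$ variable equals $\Gamma(1+p)\,\sigma^{2p}$. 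Here $\E[|f_R(z)|^2] = e^{R^2|z|^2}$, and using $\E[|f'(w)|^2] = (1+|w|^2)e^{|w|^2}$ together with $|a-b|^2 \le 2|a|^2 + 2|b|^2$ one gets $\E\bigl[|f_R'(z) - R^2\bar z\, f_R(z)|^2\bigr] \le 2R^2(1 + 2R^2|z|^2)\,e^{R^2|z|^2}$.

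Substituting these into the Cauchy--Schwarz bound, the exponential factors cancel exactly: the $e^{-pR^2|z|^2}$ in front is matched by a factor $e^{pR^2|z|^2/2}$ from each of the two moment terms, and we are left with
\[
  \E\bigl[|\nabla g_R(z)|^p\bigr] \le 2^p\,\Gamma(1+p)\,(2R^2)^{p/2}\,(1 + 2R^2|z|^2)^{p/2},
\]
which is $\le C(p,K,R)$ for $z$ in any fixed compact $K$, since $|z|$ is then bounded. There is no serious obstacle here; the only points demanding a little attention are the variance estimate for the Gaussian $f_R'(z) - R^2\bar z\, f_R(z)$ and the bookkeeping needed to see that the exponential factors indeed cancel (morally, this cancellation just reflects that $g_R = |\widehat f_R|^2$ and its derivatives are ``$O(1)$'' on compacts). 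One could alternatively avoid the explicit variance computation by observing that $z \mapsto \E[|\nabla g_R(z)|^p]$ is continuous, hence locally bounded, but the computation above is short and self-contained.
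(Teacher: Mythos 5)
Your proof is correct and follows essentially the same route as the paper: explicit differentiation of $g_R$, Cauchy--Schwarz to separate the factors, and Gaussian moment bounds via \eqref{eq: p mom 1 normal}, the only (cosmetic) difference being that you keep the exact Wirtinger derivative and track constants rather than splitting $\nabla g_R$ into two summands by the triangle inequality first.
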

\begin{proof}
  It is easy to see that
  \begin{equation*}
    \big|\nabla g_R(z)\big| \lesssim |f'_R(z)f_R(z)|e^{-R^2|z|^2} + R^2|z||\widehat{f}_R(z)|^2.
  \end{equation*}
  Trivially $\E\left[ |\widehat{f}_R(z)|^{2p} \right]$ is finite and independent of $z$, and Cauchy-Schwartz implies that
  \begin{equation*}
    \E\big[ \big|f'_R(z)f_R(z)e^{-R^2|z|^2}\big|^p \big] \leq \E\big[ \big|f'_R(z)e^{-R^2|z|^2/2}\big|^{2p} \big]^{1/2} \E\big[ \big|\widehat{f}_R(z)\big|^{2p} \big]^{1/2} \leq C(p,K,R),
  \end{equation*}
  since $f_R'(z)$ is a complex Gaussian with variance $(R^2+R^4|z|^2)e^{R^2|z|^2}$.
\end{proof}
\begin{lemma}\label{lem: grad poly}
  Given a compact $K$, a polynomial $Q$ and $1\leq p <+\infty$, we have
  \begin{equation*}
    \E\left[ \Abs{\nabla(Q\circ g_R)(z)}^p \right] \leq C(p,K,Q,R)
  \end{equation*}
  for all $z\in K$.
\end{lemma}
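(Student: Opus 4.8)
The plan is to reduce the statement to Lemma~\ref{lem: grad f^} via the chain rule and the Cauchy--Schwarz inequality. Since $g_R(z)=|f_R(z)|^2 e^{-R^2|z|^2}$ is a smooth function of $z$ (real-analytic in $z$ and $\bar z$), the composition $Q\circ g_R$ is smooth and
\begin{equation*}
  \nabla(Q\circ g_R)(z) = Q'\big(g_R(z)\big)\,\nabla g_R(z),
\end{equation*}
so that $\big|\nabla(Q\circ g_R)(z)\big| = \big|Q'(g_R(z))\big|\,\big|\nabla g_R(z)\big|$.

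Next I would split the expectation with Cauchy--Schwarz:
\begin{equation*}
  \E\big[\big|\nabla(Q\circ g_R)(z)\big|^p\big] \le \E\big[\big|Q'(g_R(z))\big|^{2p}\big]^{1/2}\,\E\big[\big|\nabla g_R(z)\big|^{2p}\big]^{1/2}.
\end{equation*}
The second factor is bounded by $C(p,K,R)$, uniformly for $z\in K$, by Lemma~\ref{lem: grad f^} applied with exponent $2p$ (note $2p\ge 2\ge 1$). For the first factor, observe that $Q'$ is again a polynomial, so $Q'(g_R(z))$ is a polynomial evaluated at $g_R(z)=|\widehat f_R(z)|^2$, where $\widehat f_R(z)$ is a standard complex Gaussian; hence Lemma~\ref{lem: poly of normal} shows $\E\big[|Q'(g_R(z))|^{2p}\big]<+\infty$, and in fact this quantity is independent of $z$ since the law of $\widehat f_R(z)$ is $\mathcal N_\C(0,1)$ for every $z$. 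Multiplying the two bounds yields the claim.

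There is no serious obstacle here: the only points needing (minor) care are the smoothness of $g_R$ used to justify the chain rule, and checking that the constants are uniform over the compact $K$ --- both already taken care of by the cited lemmas.
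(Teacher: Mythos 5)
Your proof is correct and is essentially identical to the paper's (one-line) argument: the chain rule $\nabla(Q\circ g_R)=Q'(g_R)\cdot\nabla g_R$, Cauchy--Schwarz, Lemma~\ref{lem: grad f^} for the gradient factor, and Lemma~\ref{lem: poly of normal} for the polynomial factor. Nothing further is needed.
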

\begin{proof}
  Since $\nabla(Q\circ g_R)(z)= Q'(g_R(z)) \cdot \nabla g_R(z)$ this lemma follows from Cauchy-Schwarz, Lemma~\ref{lem: grad f^} and Lemma~\ref{lem: poly of normal}.
\end{proof}

\subsection{Interchange of operations}
In the proof of Theorem~\ref{thm: main} we will repeatedly need to apply Fubini's Theorem and exchange derivatives with expectation. In this subsection we prove some lemmas that will allow us to do precisely this. Throughout this section $\Gamma_1,\dots\Gamma_N$ will be curves and $\hat{n}_j$ will denote the normal vector to the curve $\Gamma_j$ at the point $z_j\in\Gamma_j$. We begin with a lemma that covers all of the cases we need.
\begin{lemma}\label{lem: abstract interchange}
  Let $\psi_j:\R_+\to\R$ be differentiable functions for $1\leq j\leq N$ and let $\Psi_j=\psi_j\circ g_R$. Suppose that
  \begin{equation}\label{eq: pth mom grad Lj}
    \int_{\prod_{j=1}^N\Gamma_j} \E\Big[ \Big| \prod_{j=1}^N \nabla\Psi_j(z_j) \Big| \Big] \prod_{j=1}^N |dz_j| <+\infty
  \end{equation}
  and that, for almost every tuple $(z_1,\dots,z_N)$ with respect to the measure $\prod_{j=1}^N |dz_j|$, there exists $\eps_0>0$ and $1<p<2$ such that
  \begin{equation}\label{eq: bdd grad p mom}
    \sup_{w_j\in D(z_j,\eps_0)}\E\Big[ \Big| \prod_{j=1}^N \nabla\Psi_j(w_j) \Big|^p \Big]<+\infty.
  \end{equation}
  Then
  \begin{align}\label{eq: interch}
    \E\bigg[ \int_{\prod_{j=1}^N\Gamma_j} \frac{\partial^N}{\partial \hat{n}_1\cdots\partial \hat{n}_N} \prod_{j=1}^N  \Psi_j(z_j) &\prod_{j=1}^N |dz_j| \bigg]\notag\\
    &=\int_{\prod_{j=1}^N\Gamma_j} \frac{\partial^N}{\partial \hat{n}_1\cdots\partial \hat{n}_N} \E\left[\prod_{j=1}^N \Psi_j(z_j) \right]\,\prod_{j=1}^N |dz_j|.
  \end{align}
\end{lemma}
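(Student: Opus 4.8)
The plan is to deduce \eqref{eq: interch} from the classical theorems of Fubini and differentiation under the integral sign, after setting up the right dominating functions. The two hypotheses \eqref{eq: pth mom grad Lj} and \eqref{eq: bdd grad p mom} are tailored precisely so that these two interchanges are licensed.

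First I would handle the exchange of $\E$ and $\int_{\prod_j\Gamma_j}$. Fix a unit-speed parametrisation $\gamma_j\colon I_j\to\C$ of each $\Gamma_j$, so that $\frac{\partial}{\partial\hat n_j}\Psi_j(\gamma_j(t_j))$ is a measurable function of $(\omega,t_1,\dots,t_N)$ on the product of the probability space with $\prod_j I_j$. (Measurability is not entirely free: $\nabla\Psi_j$ is an a.s.-defined random field, but since $g_R$ is a.s. smooth and nonvanishing on $K$ by the remarks following the second definition, $\nabla\Psi_j(z_j)=\psi_j'(g_R(z_j))\nabla g_R(z_j)$ is jointly measurable; I would spell this out briefly.) Now $\frac{\partial^N}{\partial\hat n_1\cdots\partial\hat n_N}\prod_j\Psi_j(z_j)=\prod_j\frac{\partial}{\partial\hat n_j}\Psi_j(z_j)$, whose absolute value is bounded by $\prod_j|\nabla\Psi_j(z_j)|$, and \eqref{eq: pth mom grad Lj} says exactly that this bound is integrable on the product measure space. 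So Fubini's theorem applies and gives
\begin{equation*}
  \E\bigg[\int_{\prod_j\Gamma_j}\frac{\partial^N}{\partial\hat n_1\cdots\partial\hat n_N}\prod_j\Psi_j(z_j)\prod_j|dz_j|\bigg]=\int_{\prod_j\Gamma_j}\E\Big[\frac{\partial^N}{\partial\hat n_1\cdots\partial\hat n_N}\prod_j\Psi_j(z_j)\Big]\prod_j|dz_j|.
\end{equation*}

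It then remains to pull the $N$ directional derivatives outside the expectation, i.e. to show that for a.e. $(z_1,\dots,z_N)$,
\begin{equation*}
  \E\Big[\frac{\partial^N}{\partial\hat n_1\cdots\partial\hat n_N}\prod_j\Psi_j(z_j)\Big]=\frac{\partial^N}{\partial\hat n_1\cdots\partial\hat n_N}\E\Big[\prod_j\Psi_j(z_j)\Big].
\end{equation*}
I would do this one variable at a time. Writing $h(w_1,\dots,w_N)=\prod_j\Psi_j(w_j)$, the differentiation-under-the-integral-sign theorem requires, near the point $(z_1,\dots,z_N)$, an integrable random dominating function for the partial derivatives. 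Here \eqref{eq: bdd grad p mom} supplies a uniform $L^p$ bound ($1<p<2$) on $\prod_j|\nabla\Psi_j(w_j)|$ over the polydisc $\prod_j D(z_j,\eps_0)$; since on this polydisc each first partial $\partial h/\partial w_{j,k}$ is dominated in absolute value by $\prod_j|\nabla\Psi_j(w_j)|$, an $L^p$ bound with $p>1$ gives in particular a uniform $L^1$ bound, which is the hypothesis of the classical theorem. Applying it successively in the $N$ coordinate directions (the intermediate integrands are again dominated by the same $L^1$ function, so the hypotheses persist) converts the mixed partial outside the expectation into the mixed partial inside, yielding the claim for a.e. tuple. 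Combining the two displays gives \eqref{eq: interch}.

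The main obstacle, such as it is, is bookkeeping rather than depth: one must be careful that the directional derivatives $\partial/\partial\hat n_j$ — which differentiate $\Psi_j$ as a function on $\R^2$ in the direction $\hat n_j(z_j)$ at a point moving along the curve — are genuinely covered by the Euclidean differentiation-under-the-integral-sign statement, and that the a.e.-defined nature of $\nabla\Psi_j(\omega,z)$ does not spoil measurability of the composite field over the product space. Both are dispatched by noting that a.s. $g_R$ is a smooth nonvanishing function on any fixed compact set, so that $\Psi_j$ is a.s. $C^1$ there and its gradient is a genuine jointly measurable random field; the derivative in the direction $\hat n_j$ is then just $\langle\nabla\Psi_j(z_j),\hat n_j\rangle$, to which the one-variable results apply verbatim. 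Everything else is the routine invocation of Fubini and of differentiation under the integral sign with the dominating functions handed to us by \eqref{eq: pth mom grad Lj} and \eqref{eq: bdd grad p mom}.
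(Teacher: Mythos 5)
Your first step (Fubini via \eqref{eq: pth mom grad Lj}) is exactly the paper's. The second step --- pulling the $N$ derivatives out of the expectation --- has two genuine gaps. First, \eqref{eq: bdd grad p mom} is a uniform-in-$w$ bound on $\E[|\cdot|^p]$, not a single integrable random variable dominating the gradients for all $w$ in the polydisc, so the classical dominated-derivative theorem does not apply; and your fallback --- ``an $L^p$ bound with $p>1$ gives in particular a uniform $L^1$ bound, which is the hypothesis of the classical theorem'' --- is wrong on both counts: a uniform $L^1$ bound is not the hypothesis of that theorem, and it is not sufficient for the interchange (a family of difference quotients bounded in $L^1$ can still concentrate). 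What the exponent $p>1$ actually buys is \emph{uniform integrability}. The correct route, which is the paper's, is to bound the difference quotient $h_j(\eps_j)=\eps_j^{-1}\big(\Psi_j(z_j+\eps_j\hat n_j)-\Psi_j(z_j)\big)$ by the average of $|\nabla\Psi_j|$ along the segment (legitimate because a.s.\ $f_R$ does not vanish on that fixed segment --- note that your assertion that $g_R$ is a.s.\ nonvanishing on a compact set is false for two-dimensional compacts), use Jensen and Fubini to transfer \eqref{eq: bdd grad p mom} into $\sup_{\eps_1,\dots,\eps_N}\E\big[|\prod_j h_j(\eps_j)|^p\big]<\infty$, and then conclude from almost sure convergence plus uniform integrability (Vitali). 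The $p>1$ cannot be discarded.

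Second, your plan to differentiate ``one variable at a time'' does not close under the stated hypotheses: after differentiating $\E[\prod_j\Psi_j(w_j)]$ in $w_1$, the quantity to be differentiated in $w_2$ involves $\langle\nabla\Psi_1(w_1),\hat n_1\rangle\,\Psi_2(w_2)\cdots\Psi_N(w_N)$, which contains the undifferentiated factors $\Psi_j(w_j)$ for $j\ge2$; these are not dominated by $\prod_j|\nabla\Psi_j(w_j)|$, and the lemma's hypotheses give no moment control over them (and could not, e.g.\ when some $\psi_j=\log$ without further information). The paper avoids this entirely by passing to the joint limit of the product of the $N$ one-dimensional difference quotients $\prod_j h_j(\eps_j)$, which is controlled by exactly the product of gradients appearing in \eqref{eq: bdd grad p mom}.
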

\begin{rem}
  Trivially \eqref{eq: pth mom grad Lj} implies that the left-hand side of \eqref{eq: interch} is well defined. However, as will be clear from the proof, we can only infer that the integrand on the right-hand side, that is the term  $\frac{\partial^N}{\partial \hat{n}_1\cdots\partial \hat{n}_N} \E\left[\prod_{j=1}^N \Psi_j(z_j) \right]$, is well-defined at the points where \eqref{eq: bdd grad p mom} holds.
\end{rem}
\begin{proof}
  Note that \eqref{eq: pth mom grad Lj} immediately implies, by Fubini, that
  \begin{align*}
    \E\bigg[ \int_{\prod_{j=1}^N\Gamma_j} \frac{\partial^N}{\partial \hat{n}_1\cdots\partial \hat{n}_N} \prod_{j=1}^N  \Psi_j(z_j) &\prod_{j=1}^N |dz_j| \bigg]\\
    &=\int_{\prod_{j=1}^N\Gamma_j} \E\left[ \frac{\partial^N}{\partial \hat{n}_1\cdots\partial \hat{n}_N} \prod_{j=1}^N \Psi_j(z_j) \right]\,\prod_{j=1}^N |dz_j|.
  \end{align*}
  It therefore suffices to show that, for almost every tuple $(z_1,\dots,z_N)$ with respect to the measure $\prod_{j=1}^N |dz_j|$,
  \begin{equation}\label{eq: 1st suff cond}
    \E\left[ \frac{\partial^N}{\partial \hat{n}_1\cdots\partial \hat{n}_N} \prod_{j=1}^N \Psi_j(z_j) \right] = \frac{\partial^N}{\partial \hat{n}_1\cdots\partial \hat{n}_N} \E\left[ \prod_{j=1}^N \Psi_j(z_j) \right].
  \end{equation}

  Fix a tuple $(z_1,\dots,z_N)$ satisfying \eqref{eq: bdd grad p mom} for $\eps_0$ and $p$, and define, for $\eps_j<\eps_0$,
  \begin{equation*}
    h_j(\eps_j)=\frac{\Psi_j(z_j + \eps_j\hat{n}_j) - \Psi_j(z_j)}{\eps_j}.
  \end{equation*}
  We will show that
  \begin{equation}\label{eq:2nd suff cond}
    \lim_{\eps_1,\dots,\eps_N\to0} \E\Big[ \prod_{j=1}^N h_j(\eps_j) \Big] = \E\Big[ \lim_{\eps_1,\dots,\eps_N\to0} \prod_{j=1}^N h_j(\eps_j) \Big]
  \end{equation}
  which will imply \eqref{eq: 1st suff cond}, and therefore prove the lemma.

  We begin by establishing the existence of the inner limit on the right-hand side of \eqref{eq:2nd suff cond}. Notice first that, almost surely, $f_R$ does not vanish on the line intervals joining $z_j$ to $z_j+\eps_0\hat{n}_j$. Therefore, there exist some (random) neighbourhoods of these intervals where the gradient $\nabla\Psi_j$ is a well-defined function. We conclude that the limits
  \begin{equation*}
    \lim_{\eps_j\to0} h_j(\eps_j) = \bigg\langle \nabla\Psi_j(z_j) , \hat{n}_j \bigg\rangle = \frac{\partial}{\partial \hat{n}_j} \Psi_j(z_j)
  \end{equation*}
  exist almost surely. Finally we show that
  \begin{equation}\label{eq:3rd suff cond}
    \sup_{0<\eps_j<\eps_0} \E\Big[ \Big|\prod_{j=1}^N h_j(\eps_j)\Big|^p \Big]\leq C(p).
  \end{equation}
  By a standard argument, this implies that $\prod_{j=1}^N h_j(\eps_j)$ for $0<\eps_j<\eps_0$ is a uniformly integrable class of functions, and since we have already showed almost sure convergence (and therefore convergence in measure), we may infer \eqref{eq:2nd suff cond}.

  Once more we note that, almost surely, $f_R$ does not vanish on the line interval joining $z_j$ to $z_j+\eps_0\hat{n}_j$. This implies that
  \begin{equation*}
    |h_j(\eps_j)| \leq \frac1{\eps_j} \int_0^{\eps_j} \Abs{\nabla\Psi_j(z_j+t_j\hat{n}_i) } \,dt_j,
  \end{equation*}
  whence,
  \begin{equation*}
    |h_j(\eps_j)|^p \leq \frac1{\eps_j} \int_0^{\eps_j} \Abs{\nabla\Psi_j(z_j+t_j\hat{n}_i) }^p \,dt_j.
  \end{equation*}
  We get
  \begin{align*}
    \E\Big[ \Big| \prod_{j=1}^N h_j(\eps_j) \Big|^p \Big] & \leq \E\Big[ \frac1{\eps_1\dots\eps_N} \int_0^{\eps_1}\dots\int_0^{\eps_N} \Big| \prod_{j=1}^N \nabla\Psi_j(z_j+t_j\hat{n}_i) \Big|^p \prod_{j=1}^N dt_j \Big]\notag\\
    &= \frac1{\eps_1\dots\eps_N} \int_0^{\eps_1}\dots\int_0^{\eps_N} \E\left[ \Big| \prod_{j=1}^N \nabla\Psi_j(z_j+t_j\hat{n}_i) \Big|^p \right] \prod_{j=1}^N dt_j,
  \end{align*}
  by Fubini. By \eqref{eq: bdd grad p mom} we see that this is bounded uniformly in $\eps_1,\dots,\eps_N$, which is precisely \eqref{eq:3rd suff cond}.
\end{proof}

We now show that the hypothesis of this previous lemma hold in each of the specific cases we will need.
\begin{lemma}\label{lem: hypoth sat polys}
  Suppose that $\psi_j$ are polynomials for $1\leq j\leq N$. Then \eqref{eq: pth mom grad Lj} and \eqref{eq: bdd grad p mom} hold.
\end{lemma}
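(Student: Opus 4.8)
The plan is to deduce both estimates directly from the uniform moment bound of Lemma~\ref{lem: grad poly}, using the generalized H\"{o}lder inequality to decouple the product of $N$ gradients. Recall that by the chain rule $\nabla\Psi_j(z) = \psi_j'(g_R(z))\,\nabla g_R(z)$, and that $\psi_j'$ is again a polynomial, so Lemma~\ref{lem: grad poly} applies to each $\Psi_j$ individually.

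First I would verify \eqref{eq: bdd grad p mom}. Fix any $p\in(1,2)$ and set $\eps_0=1$. Since each $\Gamma_j$ is a curve of finite length its support is compact, so $K:=\bigcup_{j=1}^N\{w:\mathrm{dist}(w,\Gamma_j)\leq\eps_0\}$ is a fixed compact set containing every disc $D(z_j,\eps_0)$ with $z_j\in\Gamma_j$. For any $w_1,\dots,w_N$ with $w_j\in D(z_j,\eps_0)$, H\"{o}lder's inequality with all exponents equal to $N$ gives
\[
  \E\Big[ \Big| \prod_{j=1}^N \nabla\Psi_j(w_j) \Big|^p \Big] \leq \prod_{j=1}^N \E\big[ \Abs{\nabla\Psi_j(w_j)}^{Np} \big]^{1/N}.
\]
Each factor on the right is at most $C(Np,K,\psi_j,R)^{1/N}$ by Lemma~\ref{lem: grad poly} (applicable since $Np<+\infty$), a bound independent of $w_j\in K$. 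Taking the supremum over $w_j\in D(z_j,\eps_0)\subset K$ yields \eqref{eq: bdd grad p mom}; in fact it holds for every tuple $(z_1,\dots,z_N)$, not merely almost every one.

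The bound \eqref{eq: pth mom grad Lj} is obtained in the same way: applying H\"{o}lder with exponents $N$ and then Lemma~\ref{lem: grad poly} with $p=N$ on the compact support of $\Gamma_j$,
\[
  \E\Big[ \Big| \prod_{j=1}^N \nabla\Psi_j(z_j) \Big| \Big] \leq \prod_{j=1}^N \E\big[ \Abs{\nabla\Psi_j(z_j)}^{N} \big]^{1/N} \leq \prod_{j=1}^N C(N,\Gamma_j,\psi_j,R)^{1/N},
\]
uniformly over $(z_1,\dots,z_N)\in\prod_{j=1}^N\Gamma_j$. Since each $\Gamma_j$ has finite length, the set $\prod_{j=1}^N\Gamma_j$ has finite measure with respect to $\prod_{j=1}^N|dz_j|$, so integrating the uniform bound gives \eqref{eq: pth mom grad Lj}.

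There is no genuine obstacle here: the substance is entirely contained in Lemma~\ref{lem: grad poly}, and the only points requiring a moment's care are the use of the generalized H\"{o}lder inequality to split the $N$ factors and the observation that all the relevant discs $D(z_j,\eps_0)$ lie in a single fixed compact set, so that the constants furnished by Lemma~\ref{lem: grad poly} may be taken uniform.
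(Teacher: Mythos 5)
Your proof is correct and follows essentially the same route as the paper's: both reduce the product of $N$ gradients to individual moment bounds via H\"{o}lder (the paper uses repeated Cauchy--Schwarz, you use the generalized H\"{o}lder inequality with exponents $N$, a cosmetic difference) and then invoke Lemma~\ref{lem: grad poly} on a fixed compact set. Your additional remarks about the compact neighbourhood of the curves and the finiteness of the product measure are exactly the details the paper leaves implicit.
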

\begin{rem}
  In this case, \eqref{eq: bdd grad p mom} holds for every tuple $(z_1,\dots,z_N)$.
\end{rem}
\begin{proof}
  First note that, repeatedly applying Cauchy-Schwarz, both \eqref{eq: pth mom grad Lj} and \eqref{eq: bdd grad p mom} follow if we see that
  \begin{equation*}
    \E\left[ \Abs{\nabla\Psi_j(z_j) }^p \right]
  \end{equation*}
  is uniformly bounded for $z_j$ in a compact and any $p\geq1$. But this is precisely the conclusion of Lemma~\ref{lem: grad poly}.
\end{proof}
\begin{lemma}\label{lem: hypoth sat log poly}
  Suppose that $\psi_1=\log$ and $\psi_2$ is a polynomial. Then \eqref{eq: pth mom grad Lj} and \eqref{eq: bdd grad p mom} hold (with $N=2$).
\end{lemma}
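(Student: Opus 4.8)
The plan is to reduce to the estimates already available for polynomials and isolate the only genuinely new feature: the singularity of $\log$ at the origin, which here means the (random) zeros of $f_R$. Write $\Psi_1 = \log\circ\, g_R$ and $\Psi_2 = \psi_2\circ g_R$. Since $\nabla\Psi_1(z_1) = g_R(z_1)^{-1}\nabla g_R(z_1)$, the factor $|\nabla\Psi_1(z_1)|$ is controlled by $|\widehat f_R(z_1)|^{-2}|\nabla g_R(z_1)|$, and the whole difficulty is the negative power of $|\widehat f_R(z_1)|$. First I would, by Cauchy--Schwarz (or H\"older with a triple of exponents), split
\begin{equation*}
  \E\Big[ \big|\nabla\Psi_1(z_1)\,\nabla\Psi_2(z_2)\big|^p \Big]
  \le \E\big[ |\widehat f_R(z_1)|^{-2ps} \big]^{1/s}\,\E\big[ |\nabla g_R(z_1)|^{pt} \big]^{1/t}\,\E\big[ |\nabla\Psi_2(z_2)|^{pu} \big]^{1/u},
\end{equation*}
with $\tfrac1s+\tfrac1t+\tfrac1u=1$. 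Choosing $p$ close enough to $1$ and $s$ close enough to $1$ makes $2ps<2$, so the first factor is finite by \eqref{eq: p mom 1 normal} (the $-2ps$-th moment of a standard complex Gaussian converges precisely when $2ps<2$); the second factor is finite by Lemma~\ref{lem: grad f^}; the third by Lemma~\ref{lem: grad poly}. Crucially $\widehat f_R(z_1)$ is itself $\cN_\C(0,1)$, so the bound on the first factor is a constant independent of $z_1$. This yields \eqref{eq: bdd grad p mom} for every tuple $(z_1,z_2)$ (the supremum over $w_j\in D(z_j,\eps_0)$ is harmless since the constants are uniform on compacts), and simultaneously, taking $p=1$ in the same splitting but keeping $s,t,u$ as above, it gives a bound on $\E[|\nabla\Psi_1(z_1)\nabla\Psi_2(z_2)|]$ that is uniform for $(z_1,z_2)$ in the compact set $\Gamma_1\times\Gamma_2$; integrating over the finite-length curves then gives \eqref{eq: pth mom grad Lj}.

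One should be slightly careful that $\nabla\Psi_1$ is only defined off the zero set of $f_R$; but as noted already in the proof of Lemma~\ref{lem: abstract interchange}, almost surely $f_R$ has no zero on a fixed curve (indeed on a fixed compact line segment), so $\nabla\Psi_1(z_1)$ is defined a.s.\ for each fixed $z_1$, and the manipulations above are on full-measure events. The main obstacle is simply verifying that the exponent $2ps$ can be kept below the critical value $2$ while still having $s>1$ and leaving enough room for H\"older; this is immediate by continuity since at $p=1$, $s=1$ one has $2ps=2$ and one only needs a strict inequality after a small perturbation, and the range $1\le p<2$ in \eqref{eq: bdd grad p mom} gives the needed slack.
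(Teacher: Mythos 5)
There is a genuine gap at the key step. You bound $|\nabla\Psi_1(z_1)|$ by $|\widehat f_R(z_1)|^{-2}\,|\nabla g_R(z_1)|$ and then use H\"older to separate the factor $\E\big[|\widehat f_R(z_1)|^{-2ps}\big]^{1/s}$. By \eqref{eq: p mom 1 normal} this negative moment is finite precisely when $2ps<2$, i.e.\ $ps<1$. But in a three-way H\"older with $\tfrac1s+\tfrac1t+\tfrac1u=1$ you must take $s>1$ (if $s=1$ the other two exponents are $\infty$, which is useless here), and in both \eqref{eq: pth mom grad Lj} and \eqref{eq: bdd grad p mom} you have $p\ge 1$; hence $ps>1$ always, and the first factor is $+\infty$. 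Your remark that ``at $p=1$, $s=1$ one has $2ps=2$ and one only needs a strict inequality after a small perturbation'' points the wrong way: any admissible perturbation pushes $2ps$ strictly \emph{above} $2$, into the divergent range (the density of $|\widehat f_R(z_1)|^{-2ps}$ behaves like $|z|^{-2ps}$ near the origin in two real dimensions, non-integrable for $2ps\ge 2$).

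The problem is that splitting off $|\widehat f_R(z_1)|^{-2}$ from $\nabla g_R(z_1)$ destroys a crucial cancellation: $\nabla g_R(z_1)$ itself carries one power of $|\widehat f_R(z_1)|$ (its main term is $|f_R'(z_1)f_R(z_1)|e^{-R^2|z_1|^2}$), so the quotient $g_R^{-1}\nabla g_R$ has only a \emph{first} inverse power of the Gaussian, not a second. The paper's route keeps this cancellation intact by writing $\Psi_1(z_1)=\log|f_R(z_1)|^2 - R^2|z_1|^2$, whence $|\nabla\Psi_1(z_1)|\lesssim \big|\tfrac{f_R'(z_1)}{f_R(z_1)}\big| + R^2|z_1|$, and then applies a two-factor H\"older with exponents $q,q'$ ($1<q<\tfrac2p$): the $\Psi_2$ factor is handled by Lemma~\ref{lem: grad poly}, and the remaining moment $\E\big[|f_R'(z_1)/f_R(z_1)|^{pq}\big]$ is finite by Lemma~\ref{lem: ratio normals} since $pq<2$ — an exponent below the critical value $2$ on a single inverse Gaussian, which is exactly the room your decomposition gives away. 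The rest of your argument (uniformity on compacts, integrability over the finite-length curves, the a.s.\ non-vanishing of $f_R$) is fine once this step is repaired.
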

\begin{rem}
  In this case, \eqref{eq: bdd grad p mom} holds for every pair $(z_1,z_2)$.
\end{rem}
\begin{proof}
  Suppose that $1\leq p<2$, choose $1<q< \tfrac 2p$ and let $q'$ be the H\"{o}lder conjugate of $q$ (i.e., $\tfrac1q + \tfrac1{q'}=1$). Note that
  \begin{align*}
    \E\Big[ \big| \nabla\Psi_1(z_1) \nabla\Psi_2(z_2) \big|^p \Big] \leq  \E\Big[ \big| \nabla\Psi_1(z_1) \big|^{pq} \Big]^{1/q} \E\Big[ \big| \nabla\Psi_2(z_2) \big|^{pq'} \Big]^{1/q'}.
  \end{align*}
  Once more, applying Lemma~\ref{lem: grad poly}, the term involving $\Psi_2$ is uniformly bounded. It therefore suffices to see that
  \begin{equation*}
    \E\Big[ \big| \nabla\Psi_1(z_1) \big|^{pq} \Big]
  \end{equation*}
  is uniformly bounded for $z_1$ in a compact and $1<pq<2$. Since
  \begin{equation*}
    \Psi_1(z_1)= \log |\widehat{f}_R(z_1)|^2 = \log |f_R(z_1)|^2 + R^2|z_1|^2
  \end{equation*}
  we have
  \begin{equation*}
    \big| \nabla\Psi_1(z_1) \big| \lesssim \Abs{\frac{f_R'(z_1)}{f_R(z_1)}} + R^2|z_1|,
  \end{equation*}
  and Lemma~\ref{lem: ratio normals} completes the proof.
\end{proof}
\begin{lemma}\label{lem: hypoth sat 2 logs}
  Suppose that $\psi_1=\psi_2=\log$. Then (with $N=2$) \eqref{eq: pth mom grad Lj} holds and for every pair $(z_1,z_2)$ with $z_1\neq z_2$, \eqref{eq: bdd grad p mom} holds.
\end{lemma}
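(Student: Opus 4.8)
The structure should mirror the two previous lemmas: verify the two hypotheses \eqref{eq: pth mom grad Lj} and \eqref{eq: bdd grad p mom} of Lemma~\ref{lem: abstract interchange} in the case $\psi_1=\psi_2=\log$, $N=2$. As in Lemma~\ref{lem: hypoth sat log poly}, we compute
\begin{equation*}
  \Psi_j(z_j)=\log g_R(z_j)=\log|f_R(z_j)|^2+R^2|z_j|^2,\qquad
  \big|\nabla\Psi_j(z_j)\big|\lesssim\Abs{\frac{f_R'(z_j)}{f_R(z_j)}}+R^2|z_j|,
\end{equation*}
so everything reduces to controlling moments of the logarithmic derivative $f_R'/f_R$ at $z_1$ and $z_2$ simultaneously. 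The point where this case differs from the previous two is that $\nabla\Psi_1(z_1)$ and $\nabla\Psi_2(z_2)$ are \emph{not} independent, and neither of the two factors individually has a finite second moment — Lemma~\ref{lem: ratio normals} only gives $L^p$ control for $p<2$ — so a crude H\"older split as in Lemma~\ref{lem: hypoth sat log poly} fails to reach $p>1$.

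First I would dispose of \eqref{eq: bdd grad p mom}. Fix $z_1\neq z_2$ and choose $\eps_0>0$ so small that $\overline{D(z_1,\eps_0)}$ and $\overline{D(z_2,\eps_0)}$ are disjoint compacts. For $w_j\in D(z_j,\eps_0)$ one has $\bigl|\nabla\Psi_1(w_1)\nabla\Psi_2(w_2)\bigr|\lesssim\bigl(\Abs{f_R'(w_1)/f_R(w_1)}+C\bigr)\bigl(\Abs{f_R'(w_2)/f_R(w_2)}+C\bigr)$. Since the two disks are disjoint, the key fact is that the Gaussian vectors $(f_R(w_1),f_R'(w_1))$ and $(f_R(w_2),f_R'(w_2))$ are \emph{jointly nondegenerate with a correlation bounded away from $1$}: concretely, the normalised covariance $|\widehat K_R(w_1,w_2)|=e^{-R^2|w_1-w_2|^2/2}$ is bounded by some $\rho<1$ uniformly over the two disks. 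Writing $\zeta_j=\widehat f_R(w_j)$, each is $\cN_\C(0,1)$ with $|\E[\zeta_1\bar\zeta_2]|=e^{-R^2|w_1-w_2|^2/2}\le\rho<1$, so the pair $(\zeta_1,\zeta_2)$ has a joint density bounded above on all of $\C^2$ (the inverse covariance matrix is uniformly bounded). Hence $\E\bigl[|\zeta_1\zeta_2|^{-p}\bigr]=\int|z_1z_2|^{-p}\,p_{\zeta_1,\zeta_2}(z_1,z_2)\,dz_1dz_2<\infty$ for any $p<2$ (the singularities $|z_j|^{-p}$ are locally integrable in the plane for $p<2$, and there is no singularity at infinity because of Gaussian decay). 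Feeding this through Cauchy--Schwarz against the (bounded, by Lemma~\ref{lem: grad f^}) moments of $f_R'(w_j)e^{-R^2|w_j|^2/2}$ and $\widehat f_R(w_j)$, exactly as in Lemma~\ref{lem: ratio normals}, gives $\sup_{w_j\in D(z_j,\eps_0)}\E\bigl[\bigl|\nabla\Psi_1(w_1)\nabla\Psi_2(w_2)\bigr|^p\bigr]<\infty$ for some $1<p<2$; the supremum is over a compact set and all the estimates are uniform there.

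For \eqref{eq: pth mom grad Lj} I would integrate the pointwise bound just obtained over $\Gamma_1\times\Gamma_2$. Away from the diagonal the integrand is bounded by the local estimate above, so the only issue is the set where $z_1=z_2$ (or $z_1$ close to $z_2$). There $|\widehat K_R(z_1,z_2)|\to1$, Lemma~\ref{lem: -pth mom 2 normals} applies with $\theta=\widehat K_R(z_1,z_2)$, giving $\E[|\zeta_1\zeta_2|^{-p}]\lesssim(1-|\theta|^2)^{1-p}\simeq(R^2|z_1-z_2|^2)^{1-p}$ for $1\le p<2$; this is a negative power strictly less than $1$ of $|z_1-z_2|$, hence locally integrable against arc length $|dz_1||dz_2|$ along a finite-length curve (indeed, fixing $z_1$, the function $z_2\mapsto|z_1-z_2|^{2(1-p)}$ integrates to something finite over $\Gamma_2$ since $2(p-1)<1$, uniformly in $z_1$). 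Combining this with Cauchy--Schwarz as before yields a bound on $\E\bigl[|\nabla\Psi_1(z_1)\nabla\Psi_2(z_2)|\bigr]$ that is integrable over $\Gamma_1\times\Gamma_2$, establishing \eqref{eq: pth mom grad Lj}.

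The main obstacle is the genuine interaction near the diagonal: one cannot treat the two logarithmic derivatives as independent, and $p$ must be pushed above $1$ while each factor individually fails to be in $L^2$. The resolution is to handle the joint pair $(\widehat f_R(z_1),\widehat f_R(z_2))$ directly — bounded joint density off the diagonal, and the quantitative Lemma~\ref{lem: -pth mom 2 normals} on it — rather than splitting. Once the negative-moment bound $(1-|\widehat K_R(z_1,z_2)|^2)^{1-p}$ is in hand, integrability along the curves is a routine consequence of $2(p-1)<1$.
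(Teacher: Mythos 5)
Your proposal is correct and follows essentially the same route as the paper: H\"older to separate the two logarithmic derivatives into a joint negative moment $\E\big[|f_R(z_1)f_R(z_2)|^{-pq}\big]$ and bounded positive moments of $f_R'$, then Lemma~\ref{lem: -pth mom 2 normals} to convert that negative moment into a power of $1-|\widehat{K}_R(z_1,z_2)|^2\gtrsim R^2|z_1-z_2|^2$ with exponent in $(-\tfrac12,0)$, which is integrable along the finite-length curves; for \eqref{eq: bdd grad p mom} your bounded-joint-density observation off the diagonal is an equivalent substitute for the paper's direct use of the same estimate with $\sup|\widehat{K}_R|<1$ on the two disks. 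The only slip is bookkeeping: the integrability criterion is not ``$2(p-1)<1$ for all $1\leq p<2$'' (false for $p\geq\tfrac32$), but rather that after extracting the $\tfrac1q$-th root in H\"older the exponent $2(\tfrac1q-1)$ of $|z_1-z_2|$ exceeds $-1$ for every $q<2$ --- harmless, since the lemma only requires some $p>1$ close to $1$.
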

\begin{proof}
  First fix $1\leq p<2$, let $1<q<\tfrac2p$ and let $q'$ be the H\"{o}lder conjugate of $q$. Then, for $w_1\neq w_2$,
  \begin{align*}
    \E\left[\Abs{\frac{f_R'(w_1)}{f_R(w_1)} \frac{f_R'(w_2)}{f_R(w_2)}}^p \right] &\leq \left(\E\Big[ \big|f_R(w_1) f_R(w_2) \big|^{-pq}\Big]\right)^{\tfrac1{q}} \left(\E\Big[ \big|f_R'(w_1) f_R'(w_2) \big|^{pq'} \Big] \right)^{\tfrac1{q'}}\notag\\
    &\leq \left(\E\Big[ \big| f_R(w_1) f_R(w_2)\big|^{-pq} \Big] \right)^{\tfrac1{q}} \left(\E\Big[ \big|f_R'(w_1) \big|^{2pq'} \Big] \E\Big[ \big|f_R'(w_2)\big|^{2pq'} \Big] \right)^{\tfrac1{2q'}}\notag\\
    &\leq C \left(\E\Big[ \big|f_R(w_1) f_R(w_2)\big|^{-pq} \Big]\right)^{\tfrac1{q}}
  \end{align*}
  since $f_R'(w)$ is a complex Gaussian with variance $(R^2+R^4|w|^2)e^{R^2|w|^2}$. (The constant $C$ depends on $p,R$ and, if $w_1$ and $w_2$ are restricted to lie in a compact $K$, on $K$.) Applying Lemma~\ref{lem: -pth mom 2 normals} we have
  \begin{equation}\label{eq: pth mom log derivs}
    \E\left[ \Abs{\frac{f_R'(w_1)}{f_R(w_1)} \frac{f_R'(w_2)}{f_R(w_2)}}^p \right] \lesssim (1-|\widehat{K}_R(w_1,w_2)|^2)^{\tfrac1{pq}-1}.
  \end{equation}

  Once more we note that $\Abs{ \nabla\Psi_j(z)}\lesssim \Abs{\frac{f_R'(z)}{f_R(z)}} + R^2|z|$ for $j=1,2$. Therefore to show \eqref{eq: pth mom grad Lj} it suffices to see that
  \begin{equation*}
    \int_{\Gamma_1}\int_{\Gamma_2} \E\left[ \Abs{\frac{f_R'(z_1)}{f_R(z_1)} \frac{f_R'(z_2)}{f_R(z_2)}} \right] \,|dz_1||dz_2| \leq C.
  \end{equation*}
  Now for $z_1\in\Gamma_1$ and $z_2\in\Gamma_2$ we have $1-|\widehat{K}_R(z_1,z_2)|^2=1-e^{-R^2|z_1-z_2|^2}\gtrsim R^2|z_1-z_2|^2$, where the implicit constant depends only on $\Gamma_1$ and $\Gamma_2$. We conclude that, taking $p=1$ in \eqref{eq: pth mom log derivs},
  \begin{equation*}
    \int_{\Gamma_2}\int_{\Gamma_1} \E\left[ \Abs{\frac{f_R'(z_1)}{f_R(z_1)} \frac{f_R'(z_2)}{f_R(z_2)}} \right] \,|dz_1||dz_2| \lesssim \int_{\Gamma_2} \left(\int_{\Gamma_1\setminus\{z_2\}} |z_1-z_2|^{2\left(\tfrac1{q}-1\right)} \,|dz_1|\right) |dz_2|<+\infty,
  \end{equation*}
  since $2(\tfrac1{q}-1)>-1$. This proves \eqref{eq: pth mom grad Lj}.

  Now fix $z_1\in\Gamma_1$ and $z_2\in\Gamma_2$ with $z_1\neq z_2$ and $1< p<2$. Since $|\widehat{K}_R(z_1,z_2)|=e^{-R^2|z_1-z_2|^2/2}<1$ we can find $\eps_0>0$ such that
  \begin{equation*}
    \sup_{\substack{w_1\in D(z_1,\eps_0)\\w_2\in D(z_2,\eps_0)}}|\widehat{K}_R(w_1,w_2)|<1.
  \end{equation*}
  Then, by \eqref{eq: pth mom log derivs},
  \begin{equation*}
    \sup_{\substack{w_1\in D(z_1,\eps_0)\\w_2\in D(z_2,\eps_0)}} \E\left[ \Abs{\frac{f_R'(w_1)}{f_R(w_1)} \frac{f_R'(w_2)}{f_R(w_2)}}^p \right] \lesssim \sup_{\substack{w_1\in D(z_1,\eps_0)\\w_2\in D(z_2,\eps_0)}} (1-|\widehat{K}_R(w_1,w_2)|^2)^{\tfrac1{pq}-1}<+\infty.
  \end{equation*}
  This implies that \eqref{eq: bdd grad p mom} holds, and completes the proof of the lemma.
\end{proof}

\section{The mean and variance}\label{sec: mean and var}
In this section we prove the first part of our theorem, the asymptotic \eqref{eq:var}. We begin by computing the mean of $\Delta_R(\Gamma)$, that is, proving \eqref{eq:mean}; note that by linearity that it's enough to show that
\begin{equation*}
  \E[\Delta_R(\Gamma)]=R^2\im\left(\int_{\Gamma}\bar{z}\,dz\right)
\end{equation*}
for a $\mathcal C^1$ regular oriented simple curve $\Gamma$. For such a curve we have (note that almost surely $f_R$ does not vanish on $\Gamma$)
\begin{equation*}
  \Delta_R(\Gamma)=\im\left(\int_{\Gamma}\frac{f_R'(z)}{f_R(z)}\,dz\right)
\end{equation*}
which implies that
\begin{equation*}
  \E[\Delta_R(\Gamma)]=\im\left(\int_{\Gamma}\E\left[\frac{f_R'(z)}{f_R(z)}\right]\,dz\right);
\end{equation*}
we may apply Fubini by Lemma~\ref{lem: ratio normals}. Applying Lemma~\ref{lem: Kahane mean} we see that
\begin{equation*}
  \E[\Delta_R(\Gamma)]=\im\left(\int_{\Gamma}\frac{R^2\bar{z}e^{R^2|z|^2}}{e^{R^2|z|^2}}\,dz\right)= R^2 \im\left( \int_{\Gamma}\bar{z}\,dz\right),
\end{equation*}
which is precisely \eqref{eq:mean}.

\subsection{The variance}
Given a chain $\Gamma = \sum_i a_i \Gamma_i$, to prove \eqref{eq:var} it is enough to show that
\begin{equation}\label{eq: cov asymptotic}
  \cov (\Delta_R(\Gamma_i),\Delta_R(\Gamma_j))=\frac{\sqrt\pi} 2\zeta\left(\frac32\right)R \cL(\Gamma_i,\Gamma_j)(1+o(1))
\end{equation}
and the rest of this section will be devoted to establishing this asymptotic. First note that we have
\begin{equation}\label{eq: def Delta i}
  \Delta_R(\Gamma_i)=\int_{\Gamma_i}\frac{\partial}{\partial \hat{n}_i}\log|f_R(z)|\,|dz|
\end{equation}
and that \eqref{eq:mean} may be re-written as
\begin{equation}\label{eq: mean Delta i}
  \E[\Delta_R(\Gamma_i)]=\frac12\int_{\Gamma_i}\frac{\partial}{\partial \hat{n}_i}\log K_R(z,z)\,|dz|.
\end{equation}
Recalling that
\begin{equation*}
  \widehat{f}_R(z)=\frac{f_R(z)}{\sqrt{K_R(z,z)}},
\end{equation*}
we see that
\begin{equation*}
  \cov (\Delta_R(\Gamma_i),\Delta_R(\Gamma_j))=\E\left[\int_{\Gamma_i}\int_{\Gamma_j}\frac{\partial^2}{\partial \hat{n}_i\partial \hat{n}_j}\log|\widehat{f}_R(z_j)|\log|\widehat{f}_R(z_i)|\,|dz_j||dz_i|\right].
\end{equation*}
Note that here, and henceforth unless specified otherwise, $\hat{n}_i$ (respectively $\hat{n}_j$) refers to the unit normal vector to the curve $\Gamma_i$ (respectively $\Gamma_j$) at the point $z_i\in\Gamma_i$ (respectively $z_j\in\Gamma_j$).

Now Lemma~\ref{lem: hypoth sat 2 logs} allows us to apply Lemma~\ref{lem: abstract interchange} to see that
\begin{equation}\label{eq: cov ij}
  \cov (\Delta_R(\Gamma_i),\Delta_R(\Gamma_j))=\int_{\Gamma_i}\int_{\Gamma_j}\frac{\partial^2}{\partial \hat{n}_i\partial \hat{n}_j}\E\left[\log|\widehat{f}_R(z_j)|\log|\widehat{f}_R(z_i)|\right]\,|dz_j||dz_i|.
\end{equation}
We add the caveat here (c.f. the remark to Lemma~\ref{lem: abstract interchange}) that the integrand on the right-hand side is defined only for $z_j\neq z_i$. We compute the inner expectation through the following lemma.
\begin{lemma}[\citelist{\cite{SZ}*{Lemma 3.3} \cite{NS}*{Lemma 2.2} \cite{HKPV}*{Lemma 3.5.2}}]\label{lem: dilog}
If $\zeta_1$ and $\zeta_2$ are $\mathcal{N}_\C(0,1)$ random variables with $\E[\zeta_1\bar{\zeta_2}]=\theta$ then
\begin{equation*}
  \E[\log|\zeta_1|]=-\frac\gamma2
\end{equation*}
and
\[
\cov[\log|\zeta_1|,\log|\zeta_2|]= \frac14 \dl(|\theta|^2)
\]
where the dilogarithm is defined by
\[
\dl(z)=\sum_{\alpha=1}^\infty\frac{z^\alpha}{\alpha^2}.
\]
\end{lemma}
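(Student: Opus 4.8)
The plan is to reduce everything to explicit one- and two-dimensional Gaussian integrals. The mean formula is immediate from \eqref{eq: p mom 1 normal}: as $p\mapsto\E[|\zeta_1|^p]=\Gamma(1+\tfrac p2)$ is smooth for $p>-2$, differentiating under the integral sign at $p=0$ gives $\E[\log|\zeta_1|]=\tfrac12\Gamma'(1)=-\tfrac\gamma2$, and differentiating once more gives $\E\bigl[(\log|\zeta_1|)^2\bigr]=\tfrac14\Gamma''(1)=\tfrac14\bigl(\gamma^2+\tfrac{\pi^2}{6}\bigr)$ --- which settles the degenerate case $|\theta|=1$, in which $\zeta_1=e^{i\alpha}\zeta_2$ a.s.\ and hence $\cov[\log|\zeta_1|,\log|\zeta_2|]=\var\log|\zeta_1|=\tfrac{\pi^2}{24}=\tfrac14\dl(1)$. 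So assume $|\theta|<1$, and (replacing $\zeta_2$ by a unimodular multiple, which changes nothing on either side) that $\theta=|\theta|\in[0,1)$.

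With this normalisation $(\zeta_1,\zeta_2)$ has joint density $\tfrac{1}{\pi^2(1-\theta^2)}\exp\!\bigl(-\tfrac{|z_1|^2+|z_2|^2-\theta(\bar z_1z_2+z_1\bar z_2)}{1-\theta^2}\bigr)$. I would expand the factor $\exp\!\bigl(\tfrac{\theta(\bar z_1z_2+z_1\bar z_2)}{1-\theta^2}\bigr)$ as a double power series in $\bar z_1z_2$ and $z_1\bar z_2$ and integrate out the two angular variables; all terms drop except the diagonal ones, which reassemble into a modified-Bessel series. Passing to polar coordinates, writing $x=\theta^2$, and evaluating each radial integral $\int_0^\infty(\log r)\,r^{2n+1}e^{-r^2/(1-x)}\,dr$ by the substitution $u=r^2/(1-x)$ --- which produces a gamma-function normalisation together with the factor $\log(1-x)-\gamma+H_n$, where $H_n$ is the $n$-th harmonic number --- leaves
\[
  \E\bigl[\log|\zeta_1|\log|\zeta_2|\bigr]=\frac{1-x}{4}\sum_{n=0}^\infty x^n\bigl(\log(1-x)-\gamma+H_n\bigr)^2 .
\]
The interchange of summation and integration is justified by monotone convergence, applied separately to $\log^+ r$ and $\log^- r$.

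It then remains only to sum the series. Using the elementary generating functions $\sum_n H_nx^n=-\tfrac{\log(1-x)}{1-x}$ and $\sum_n H_n^2x^n=\tfrac{\log^2(1-x)+\dl(x)}{1-x}$ (the latter obtained, say, by integrating the former), all the $\log(1-x)$-terms cancel and the sum collapses to $\tfrac{\gamma^2+\dl(x)}{1-x}$, so that $\E[\log|\zeta_1|\log|\zeta_2|]=\tfrac14(\gamma^2+\dl(x))$; subtracting $\E[\log|\zeta_1|]\,\E[\log|\zeta_2|]=\tfrac{\gamma^2}{4}$ yields $\cov[\log|\zeta_1|,\log|\zeta_2|]=\tfrac14\dl(|\theta|^2)$. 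The calculation is routine bookkeeping; the only places needing attention are the interchange of summation and (a priori signed) integration, and having at hand the closed form of $\sum_n H_n^2x^n$ that makes $\dl$ surface. A slicker variant, along the lines of the cited references, instead expands the joint density of the pair $(|\zeta_1|^2,|\zeta_2|^2)$ --- a Kibble bivariate exponential with correlation $|\theta|^2$ --- in the Laguerre polynomials $L_n$ orthonormal for $e^{-u}\,du$ (Hille--Hardy formula); since $\int_0^\infty(\log u)\,L_n(u)e^{-u}\,du$ equals $-\gamma$ for $n=0$ and $-\tfrac1n$ for $n\ge1$, one reads off both $\E[\log|\zeta_1|]=-\tfrac\gamma2$ and $\cov[\log|\zeta_1|^2,\log|\zeta_2|^2]=\sum_{n\ge1}|\theta|^{2n}/n^2=\dl(|\theta|^2)$ directly.
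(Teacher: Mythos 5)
Your proposal is correct, and your primary argument is genuinely different from the route the paper has in mind. The paper does not prove the lemma in place (it cites \cite{SZ}, \cite{NS}, \cite{HKPV}); its implicit proof is the one indicated in the remark following \eqref{eq: exp wick prod}: expand $\log|\zeta|$ in the Wick/Laguerre basis as in \eqref{eq: Wick log}, and use the orthogonality relation $\E\big[\Wiaa{\zeta_1}{\alpha}\,\Wiaa{\zeta_2}{\beta}\big]=\delta_{\alpha\beta}(\alpha!)^2|\theta|^{2\alpha}$, so that the covariance is $\sum_{\alpha\ge1}c_\alpha^2|\theta|^{2\alpha}=\tfrac14\dl(|\theta|^2)$ with no further work --- this is exactly the ``slicker variant'' you sketch at the end via the Hille--Hardy expansion, since $\Wiaa{\zeta}{\alpha}=(-1)^\alpha L_\alpha(|\zeta|^2)$. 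Your main argument instead computes the two-point function bare-handed: reduce to $\theta\in[0,1)$, expand the joint density, integrate out the angles into an $I_0$-series, evaluate the radial log-integrals via $\Gamma'(n+1)=n!(H_n-\gamma)$, and collapse $\tfrac{1-x}{4}\sum_n x^n(\log(1-x)-\gamma+H_n)^2$ using $\sum_n H_nx^n=-\tfrac{\log(1-x)}{1-x}$ and $\sum_n H_n^2x^n=\tfrac{\log^2(1-x)+\dl(x)}{1-x}$; I have checked that the bookkeeping closes up to $\tfrac14(\gamma^2+\dl(x))$, and your treatment of the mean and of the degenerate case $|\theta|=1$ via derivatives of $\Gamma(1+\tfrac p2)$ at $p=0$ is also correct. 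The trade-off: your computation is elementary and self-contained but hides the mechanism in a nontrivial harmonic-number identity, whereas the chaos-expansion proof makes the dilogarithm appear for structural reasons and reuses machinery the paper needs anyway for the CLT (the approximants $\log_m$ and the diagram formula), which is why the paper prefers it. Two small points of hygiene in your write-up: the series--integral interchange is Tonelli applied to the four nonnegative pieces $\log^{\pm}r_1\log^{\pm}r_2$ (each finite by Cauchy--Schwarz and $\E[(\log|\zeta|)^2]<\infty$) rather than monotone convergence per se, and the generating function for $\sum_n H_n^2x^n$ is not literally obtained by integrating that for $\sum_n H_nx^n$ (one standard route is via $H_n^2-H_{n-1}^2=\tfrac{2H_n}{n}-\tfrac1{n^2}$), but the identity you quote is correct.
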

Applying the lemma and recalling that
\begin{equation*}
|\widehat{K}_R(z_j,z_i)|=\frac{|K_R(z_j,z_i)|} {\sqrt{K_R(z_j,z_j)K_R(z_i,z_i)}} = e^{-R^2|z_j-z_i|^2/2}
\end{equation*}
we have
\begin{align*}
  \cov (\Delta_R(\Gamma_i),\Delta_R(\Gamma_j))&=\int_{\Gamma_i}\int_{\Gamma_j}\frac{\partial^2}{\partial \hat{n}_i\partial \hat{n}_j} \left(\cov\left(\log|\widehat{f}_R(z_j)|\log|\widehat{f}_R(z_i)|\right) + \frac{\gamma^2}{4}\right) \,|dz_j||dz_i|\\
  &=\frac14\int_{\Gamma_i}\int_{\Gamma_j}\frac{\partial^2}{\partial \hat{n}_i\partial \hat{n}_j}\dl(|\widehat{K}_R(z_j,z_i)|^2)\,|dz_j||dz_i|\\
  &=\frac14\int_{\Gamma_i}\int_{\Gamma_j}\frac{\partial^2}{\partial \hat{n}_i\partial \hat{n}_j}\sum_{\alpha=1}^\infty\frac{e^{-R^2\alpha|z_j-z_i|^2}}{\alpha^2}\,|dz_j||dz_i|.
\end{align*}
(Note that since $\dl$ is not differentiable at $1$, the integrand is still only defined for $z_j\neq z_i$.)

\begin{lemma}\label{cl:abs int}
  \begin{equation*}
    \frac1{R}\int_{\Gamma_i}\int_{\Gamma_j}\Abs{\frac{\partial^2}{\partial \hat{n}_i\partial \hat{n}_j}e^{-R^2|z_j-z_i|^2}}\,|dz_j||dz_i|=O(1)
  \end{equation*}
  where the implicit constant depends only on $\Gamma_i$ and $\Gamma_j$.
\end{lemma}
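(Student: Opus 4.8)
The plan is to make the differentiation completely explicit and then reduce the whole estimate to a one–dimensional Gaussian integral. First I would compute the mixed directional derivative: writing $\Phi(z_i,z_j)=e^{-R^2|z_i-z_j|^2}$ and differentiating in $z_i$ along $\hat n_i$ and in $z_j$ along $\hat n_j$ (using $\nabla_{z_i}|z_i-z_j|^2 = 2(z_i-z_j)$ and $\nabla_{z_j}|z_i-z_j|^2=-2(z_i-z_j)$) gives
\[
  \frac{\partial^2}{\partial\hat n_i\partial\hat n_j}e^{-R^2|z_i-z_j|^2}
  =\Bigl(2R^2\langle\hat n_i,\hat n_j\rangle-4R^4\langle z_i-z_j,\hat n_i\rangle\langle z_i-z_j,\hat n_j\rangle\Bigr)e^{-R^2|z_i-z_j|^2}.
\]
Since $|\hat n_i|=|\hat n_j|=1$, Cauchy--Schwarz yields $\bigl|\frac{\partial^2}{\partial\hat n_i\partial\hat n_j}\Phi\bigr|\le(2R^2+4R^4|z_i-z_j|^2)e^{-R^2|z_i-z_j|^2}$, and the elementary inequality $t e^{-t}\lesssim e^{-t/2}$ applied with $t=R^2|z_i-z_j|^2$ bounds this by $\lesssim R^2 e^{-R^2|z_i-z_j|^2/2}$. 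Thus it suffices to prove $R\int_{\Gamma_i}\int_{\Gamma_j}e^{-R^2|z_i-z_j|^2/2}\,|dz_j||dz_i|=O(1)$.

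Next, for a fixed $z_i$, I would bound the inner integral $\int_{\Gamma_j}e^{-R^2|z_i-z_j|^2/2}\,|dz_j|$ by $C(\Gamma_j)/R$, \emph{uniformly} in $z_i$. This is the geometric heart of the argument. Parametrise $\Gamma_j$ by arclength; by uniform continuity of its unit tangent, one may cut $\Gamma_j$ into finitely many sub-arcs — the number depending only on $\Gamma_j$ — on each of which the tangent stays in a narrow cone, so that after a rotation the sub-arc is a Lipschitz graph $x\mapsto(x,h(x))$ with $|h'|\le 1$. On such a piece $|dz_j|\le\sqrt2\,dx$ and $|z_i-(x,h(x))|\ge|a-x|$, where $a$ is the first coordinate of $z_i$ in the rotated frame, so the contribution is at most $\sqrt2\int_{\R}e^{-R^2(a-x)^2/2}\,dx=O(1/R)$. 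Summing over the finitely many pieces gives the claimed uniform bound. (Equivalently, one can phrase this decomposition as the estimate $\mathcal H^1(\Gamma_j\cap D(z,r))\le C(\Gamma_j)\,r$ for all $z$ and $r>0$, and then integrate the Gaussian via the layer-cake formula.)

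Finally, integrating the bound $\int_{\Gamma_j}e^{-R^2|z_i-z_j|^2/2}|dz_j|\le C(\Gamma_j)/R$ over $z_i\in\Gamma_i$ contributes a factor $\mathcal H^1(\Gamma_i)$, and the outer factor $R$ cancels the $1/R$, so
\[
  \frac1R\int_{\Gamma_i}\int_{\Gamma_j}\Bigl|\frac{\partial^2}{\partial\hat n_i\partial\hat n_j}e^{-R^2|z_i-z_j|^2}\Bigr|\,|dz_j||dz_i|
  \lesssim \frac1R\cdot R^2\cdot \frac{C(\Gamma_j)}{R}\cdot\mathcal H^1(\Gamma_i)=O(1),
\]
with implicit constant depending only on $\Gamma_i$ and $\Gamma_j$. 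The only non-routine step is the middle one: extracting from mere $C^1$-regularity a comparison between arclength on $\Gamma_j$ and Lebesgue measure on a coordinate axis that is uniform in the base point $z_i$. Everything else is elementary calculus together with the crude bound $t e^{-t}\lesssim e^{-t/2}$.
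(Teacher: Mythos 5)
Your proof is correct, and for this particular lemma it is actually more streamlined than the paper's. The paper keeps the two terms of the mixed derivative separate (the $R^2\langle\hat n_i,\hat n_j\rangle$ term and the $R^4\langle\hat n_i,z_j-z_i\rangle\langle\hat n_j,z_j-z_i\rangle$ term), and for each one splits $\Gamma_i$ into the points at distance $\geq\eps_R$ from $\Gamma_j$, the points at distance $<\eps_R$, and $\Gamma_i\cap\Gamma_j$; the inner integral over $\Gamma_j$ is then compared to a one-dimensional Gaussian via the nearest-point $z_i^*$ and the global chord-arc bound $m'|t-s|\leq|\gamma_j(t)-\gamma_j(s)|\leq M|t-s|$ for the parameterisation (which uses that the curve is simple). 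You instead absorb the polynomial factor into the exponential via $te^{-t}\lesssim e^{-t/2}$ and prove a single estimate $\int_{\Gamma_j}e^{-cR^2|z-z_j|^2}\,|dz_j|\lesssim 1/R$ that is uniform over \emph{all} $z\in\C$, obtained by cutting $\Gamma_j$ into finitely many Lipschitz graphs (this works because the unit tangent is uniformly continuous on the compact parameter interval) and projecting onto a line. This avoids the near/far/intersection case analysis entirely and does not even need the curve to be simple. The reason the paper goes to the extra trouble is that the same decomposition is reused immediately afterwards for Lemma~\ref{cl:asym of int}, where the precise constant $2\sqrt{\pi}\,\cL(\Gamma_i,\Gamma_j)$ must be extracted and one genuinely has to isolate the contribution of $\Gamma_i\cap\Gamma_j$; for the crude $O(1)$ bound of Lemma~\ref{cl:abs int} your shortcut is perfectly adequate, and the constant it produces visibly depends only on the lengths of the curves and the modulus of continuity of the tangent of $\Gamma_j$.
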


\begin{lemma}\label{cl:asym of int}
  \begin{equation*}
    \frac1{R}\int_{\Gamma_i}\int_{\Gamma_j}\frac{\partial^2}{\partial \hat{n}_i\partial \hat{n}_j}e^{-R^2|z_j-z_i|^2}\,|dz_j||dz_i|\to2\sqrt{\pi}\cL(\Gamma_i,\Gamma_j)
  \end{equation*}
  as $R\to\infty$.
\end{lemma}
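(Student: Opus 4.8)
The plan is to treat this as a Laplace-type asymptotic evaluation localised near the intersection of the two curves. First I would compute the mixed directional derivative: writing $\hat n_i,\hat n_j$ for the (fixed) unit normals at the points $z_i\in\Gamma_i$, $z_j\in\Gamma_j$, a direct differentiation of $e^{-R^2|z_j-z_i|^2}$ in the two variables gives
\[
  \Phi_R(z_i,z_j):=\frac{\partial^2}{\partial\hat n_i\partial\hat n_j}e^{-R^2|z_j-z_i|^2}=\Bigl(2R^2\langle\hat n_i,\hat n_j\rangle-4R^4\langle z_j-z_i,\hat n_i\rangle\langle z_j-z_i,\hat n_j\rangle\Bigr)e^{-R^2|z_j-z_i|^2}.
\]
Since $x\mapsto x^2e^{-x^2}$ is bounded and eventually decreasing, on $\{|z_i-z_j|\ge\delta\}$ one has $|\Phi_R|\lesssim R^4\delta^2e^{-R^2\delta^2}$, so the contribution of this set to $\tfrac1R\int_{\Gamma_i}\int_{\Gamma_j}\Phi_R$ is $o(1)$; hence it suffices to integrate over $U_\delta=\{(z_i,z_j)\in\Gamma_i\times\Gamma_j:|z_i-z_j|<\delta\}$ for a fixed small $\delta$.

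Next I would split $U_\delta$ by the angle between the curves. Fix a small $\eta>0$, let $\gamma_i,\gamma_j$ be unit-speed parametrisations, and let $\mathcal T$ (resp.\ $\mathcal P$) be the part of $U_\delta$ where $|\langle\gamma_i'(t),\gamma_j'(s)\rangle|\le 1-\eta$ (resp.\ $>1-\eta$). On the closure of $\mathcal T$ the pairs with $z_i=z_j$ are \emph{transverse} intersection points; applying the $\mathcal C^1$ inverse function theorem to $(t,s)\mapsto\gamma_i(t)-\gamma_j(s)$ shows each such point is isolated among solutions of $\gamma_i(t)=\gamma_j(s)$, so by compactness there are only finitely many, near $(t_1,s_1),\dots,(t_m,s_m)$ say. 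Near each of them transversality gives $|z_i-z_j|\gtrsim|t-t_k|+|s-s_k|$, so the two Gaussian widths exactly absorb the $R^4$ prefactor and $\int_{\text{nbhd}}|\Phi_R|=O(1)$, while away from them $|z_i-z_j|$ is bounded below and $|\Phi_R|$ is super-polynomially small; thus $\tfrac1R\int_{\mathcal T}\Phi_R=O(1/R)\to0$ for each fixed $\eta,\delta$. Since the limit sought is independent of $\eta$ and $\delta$, it remains to analyse $\tfrac1R\int_{\mathcal P}\Phi_R$.

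On $\mathcal P$ the curves are nearly parallel, so I would cover each of $\Gamma_i,\Gamma_j$ by finitely many arcs on which, after a rotation, the curve is a graph $y=\phi(x)$ with $\sup|\phi'|$ as small as we like, introduce a subordinate partition of unity, and work chart by chart (on mixed charts whose reference directions are far apart we fall back into the transverse case). In a chart where $\Gamma_i$ is the graph of $\phi_i$ and $\Gamma_j$ of $\phi_j$, I reparametrise both by the $x$-coordinate and carry out the inner ($\Gamma_i$-)integral by Watson's lemma: for fixed $s$ the phase $u(t)=(s-t)^2+(\phi_j(s)-\phi_i(t))^2$ has a unique minimiser $t^*(s)$ with $u(t)-u(t^*)=(1+\phi_i'(t^*)^2)(t-t^*)^2(1+o(1))$ — enough even though $\phi_i$ is only $\mathcal C^1$ — so $\int e^{-R^2u(t)}\,dt=\tfrac{\sqrt\pi}{R}e^{-R^2\rho(s)^2}(1+o(1))$ with $\rho(s)=\operatorname{dist}(\gamma_j(s),\Gamma_i)$. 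At $t^*(s)$ the vector $z_j-z_i$ is normal to $\Gamma_i$ of length $\rho(s)$ and, on $\mathcal P$, is also nearly normal to $\Gamma_j$, so the product $\langle z_j-z_i,\hat n_i\rangle\langle z_j-z_i,\hat n_j\rangle$ is $O(\rho(s)^2)$ near the minimiser, up to errors controlled by $\eta$ and the modulus of continuity of $\gamma_i'$. Hence the $4R^4$-term, though of size $R$ in absolute value, contributes $\lesssim\int(R\rho(s))^2e^{-R^2\rho(s)^2}\,ds$ plus errors small in $\eta$ and in the chart size, while the $2R^2$-term contributes $2\sqrt\pi\int\chi(s)\,\langle\hat n_i,\hat n_j\rangle\,e^{-R^2\rho(s)^2}\,ds$ plus similar errors, $\chi$ being the partition-of-unity weight.

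Finally I would let $R\to\infty$ by dominated convergence. For the $4R^4$-term, $(R\rho(s))^2e^{-R^2\rho(s)^2}\to0$ pointwise and is bounded, and a $\limsup$ argument — send $R\to\infty$ first, then the chart size and $\eta$ to zero — shows it contributes $0$. For the $2R^2$-term, $e^{-R^2\rho(s)^2}\to\ind_{\{\rho=0\}}(s)=\ind_{\Gamma_i\cap\Gamma_j}(\gamma_j(s))$ pointwise and is bounded by $1$ (Lemma~\ref{cl:abs int} furnishes an integrable majorant in any case); summing the charts, using that the partition of unity sums to $1$, that the isolated — in particular transverse — intersection points are $\cH^1$-null, and that at every non-isolated intersection point the tangents, hence the normals, agree up to sign so that $\langle\hat n_i,\hat n_j\rangle=\alpha(\gamma_i',\gamma_j')\in\{\pm1\}$, one obtains $\tfrac1R\int_{\mathcal P}\Phi_R\to 2\sqrt\pi\int_{\Gamma_i\cap\Gamma_j}\langle\hat n_i,\hat n_j\rangle\,d\cH^1=2\sqrt\pi\,\cL(\Gamma_i,\Gamma_j)$, which is the claim. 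The main obstacle is exactly the one the authors flag: $\Gamma_i\cap\Gamma_j$ may be an arbitrary compact subset of a curve and the curves are only $\mathcal C^1$, so the intersection cannot be parametrised; the argument survives this only because transversality forces the ``bad'' intersection points to be finite, and because the phase $e^{-R^2\rho(s)^2}$ can be handed straight to dominated convergence, where it collapses to $\ind_{\Gamma_i\cap\Gamma_j}$ with no regularity of that set required.
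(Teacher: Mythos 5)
Your proposal is correct in substance and arrives at the right constant, but it organises the proof genuinely differently from the paper. The paper fixes the outer point $z_i\in\Gamma_i$ and splits $\Gamma_i$ by the distance $d(z_i,\Gamma_j)$ against an $R$-dependent threshold $\eps_R=\sqrt{\beta\log R}/R$: the far part is exponentially negligible, the ``close but not intersecting'' part $\Gamma_i''$ is killed because $\mathrm{length}(\Gamma_i'')\to0$ by continuity of measure (no regularity of $\Gamma_i\cap\Gamma_j$ required), and the sharp Laplace computation is performed only at points of $\Gamma_i\cap\Gamma_j$, where the phase is exactly a chord length and the $\mathcal C^1$ estimate $|\gamma_j(t)-\gamma_j(\tau(z_i))|=|\dot\gamma_j(\tau(z_i))||t-\tau(z_i)|(1+o(1))$ yields $\sqrt\pi/R$; the $R^4$ term dies there because the chord direction converges to the tangent, so $\langle\hat n_j(z_j),z_j-z_i\rangle=o(|z_j-z_i|)$. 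You instead split $\Gamma_i\times\Gamma_j$ into transverse and near-parallel pairs, dispose of the transverse part via finiteness of transverse intersections (a fact the paper never needs, since its argument treats all of $\Gamma_i\cap\Gamma_j$ uniformly), and on the near-parallel part run the inner Laplace integral at \emph{every} outer point, handing $e^{-R^2\rho(s)^2}\to\ind_{\{\rho=0\}}$ to dominated convergence. Your DCT step and the paper's $\mathrm{length}(\Gamma_i'')\to0$ step are the same move in different clothing. What your route buys is a clean reduction to ``pointwise limit of the inner integral plus a uniform majorant'' (the majorant being exactly the paper's bound on $I_R'$, not Lemma~\ref{cl:abs int}, which would be circular here); what it costs is the chart/partition-of-unity apparatus, which the paper avoids by working directly with the bi-Lipschitz arclength parametrisation.

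One step needs repair. The expansion $u(t)-u(t^*)=(1+\phi_i'(t^*)^2)(t-t^*)^2(1+o(1))$ is justified by $\mathcal C^1$ regularity only when $\rho(s)=0$, where it reduces to the chord-length asymptotic above. When $\rho(s)>0$, the derivative $u'(t)=2(t-s)-2(\phi_j(s)-\phi_i(t))\phi_i'(t)$ picks up, between $t$ and $t^*$, an increment of order $\rho(s)$ times the modulus of continuity of $\phi_i'$ evaluated at $|t-t^*|$; for a merely $\mathcal C^1$ graph this need not be $o(|t-t^*|)$, so the two-sided quadratic behaviour of the phase near its minimiser can fail and Watson's lemma does not apply as stated. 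Fortunately you never need the sharp constant off the intersection: for $\rho(s)>0$ the pointwise limit of $R\int_{\Gamma_i}e^{-R^2|\gamma_j(s)-z_i|^2}|dz_i|$ is $0$, which already follows from the crude bound $Ce^{-R^2\rho(s)^2/2}$ obtained by splitting off half of the exponent, and domination comes from the uniform $I_R'$ estimate. Weakening your Watson's-lemma claim to ``sharp asymptotic at $\rho=0$, one-sided Gaussian bound elsewhere'' (and treating the short interval $|t-t^*|\lesssim\rho(s)$ by the trivial bound, which contributes $O(\rho(s)e^{-R^2\rho(s)^2})$ and washes out in the DCT) makes the argument sound.
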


We postpone the proofs of these lemmas, and proceed. Since the power series defining $\dl$ is absolutely convergent on the unit disc we may differentiate termwise to obtain
\begin{equation*}
  \frac{\partial^2}{\partial \hat{n}_i\partial \hat{n}_j}\sum_{\alpha=1}^\infty\frac{e^{-R^2\alpha|z_j-z_i|^2}}{\alpha^2}=\sum_{\alpha=1}^\infty\frac{1}{\alpha^2}\frac{\partial^2}{\partial \hat{n}_i\partial \hat{n}_j}e^{-R^2\alpha|z_j-z_i|^2}
\end{equation*}
for all $z_j\neq z_i$. This implies that, using Lemma~\ref{cl:abs int} and dominated convergence,
\begin{align*}
  \lim_{R\to\infty} \frac1R \cov (\Delta_R(\Gamma_i),\Delta_R(\Gamma_j))& = \frac14\sum_{\alpha=1}^\infty \left( \frac{1}{\alpha^2} \lim_{R\to\infty} \frac1R \int_{\Gamma_i}\int_{\Gamma_j}\frac{\partial^2}{\partial \hat{n}_i\partial \hat{n}_j} e^{-R^2\alpha|z_j-z_i|^2}\,|dz_j||dz_i| \right)\\
  &\overset{\mathrm{Lemma \ref{cl:asym of int}}}{=} \frac14 \sum_{\alpha=1}^\infty \left( \frac{1}{\alpha^2} \sqrt{\alpha}\cdot 2\sqrt{\pi}\cL(\Gamma_i,\Gamma_j)\right)\\
  &=\frac{\sqrt{\pi}}2\left(\sum_{\alpha=1}^\infty\frac{1}{\alpha^{3/2}}\right)\cL(\Gamma_i,\Gamma_j)
\end{align*}
which is \eqref{eq: cov asymptotic}. It remains to prove Lemmas~\ref{cl:abs int} and \ref{cl:asym of int}.

\subsubsection{Proof of Lemmas~\ref{cl:abs int} and \ref{cl:asym of int}}\label{sec: proof of claims}
First note that
\begin{equation*}
  \frac{\partial^2}{\partial \hat{n}_i\partial \hat{n}_j}e^{-R^2|z_j-z_i|^2}=e^{-R^2|z_j-z_i|^2}(2R^2\langle\hat{n}_i,\hat{n}_j\rangle-4R^4\langle\hat{n}_i,z_j-z_i\rangle\langle\hat{n}_j,z_j-z_i\rangle).
\end{equation*}
We will show that
\begin{equation}\label{eq:int precise asym bdd}
  R \int_{\Gamma_i}\int_{\Gamma_j} e^{-R^2|z_j-z_i|^2}\,|dz_j||dz_i|=O(1),
\end{equation}
\begin{equation}\label{eq:int precise asym}
  R \int_{\Gamma_i}\int_{\Gamma_j} \langle\hat{n}_i,\hat{n}_j\rangle e^{-R^2|z_j-z_i|^2}\,|dz_j||dz_i|\to\sqrt{\pi}\cL(\Gamma_i,\Gamma_j),
\end{equation}
\begin{equation}\label{eq:int to 0 bdd}
  R^{3} \int_{\Gamma_i}\int_{\Gamma_j} |z_j-z_i|^2 e^{-R^2|z_j-z_i|^2}\,|dz_j||dz_i|=O(1)
\end{equation}
and that
\begin{equation}\label{eq:int to 0}
  R^{3} \int_{\Gamma_i}\int_{\Gamma_j} \langle\hat{n}_i,z_j-z_i\rangle \langle\hat{n}_j,z_j-z_i\rangle e^{-R^2|z_j-z_i|^2}\,|dz_j||dz_i|\to0.
\end{equation}
This will yield both lemmas, and therefore \eqref{eq:var}.

With this in mind, we define, for $z_i\in\Gamma_i$,
\begin{equation*}
  I_R(z_i)= R \int_{\Gamma_j} \langle\hat{n}_i,\hat{n}_j\rangle e^{-R^2|z_j-z_i|^2}\,|dz_j|,
\end{equation*}
\begin{equation*}
  I_R'(z_i)= R \int_{\Gamma_j} e^{-R^2|z_j-z_i|^2}\,|dz_j|,
\end{equation*}
\begin{equation*}
  J_R(z_i)= R^{3} \int_{\Gamma_j} \langle\hat{n}_i,z_j-z_i\rangle\langle\hat{n}_j,z_j-z_i\rangle e^{-R^2|z_j-z_i|^2}\,|dz_j|
\end{equation*}
and
\begin{equation*}
  J_R'(z_i)= R^{3} \int_{\Gamma_j} |z_j-z_i|^2 e^{-R^2|z_j-z_i|^2}\,|dz_j|.
\end{equation*}
Fix $\beta>3$, define
\begin{equation*}
\eps_R=\frac{\sqrt{\beta\log R}}{R}
\end{equation*}
and split
\begin{equation*}\Gamma_i=\Gamma_i'\cup\Gamma_i''\cup(\Gamma_i\cap\Gamma_j)
\end{equation*}
where
\begin{equation*}\Gamma_i'=\{z_i\in\Gamma_i\colon d(z_i,\Gamma_j)\geq \eps_R\}
\end{equation*}
and
\begin{equation*}\Gamma_i''=\{z_i\in\Gamma_i\colon 0<d(z_i,\Gamma_j)< \eps_R\}
\end{equation*}
(see Figure~\ref{fig: Gamma i decom}); these sets (and all of the sets we define subsequently) may be empty.
\begin{figure}
  \centering
  \includegraphics[width=0.5\columnwidth]{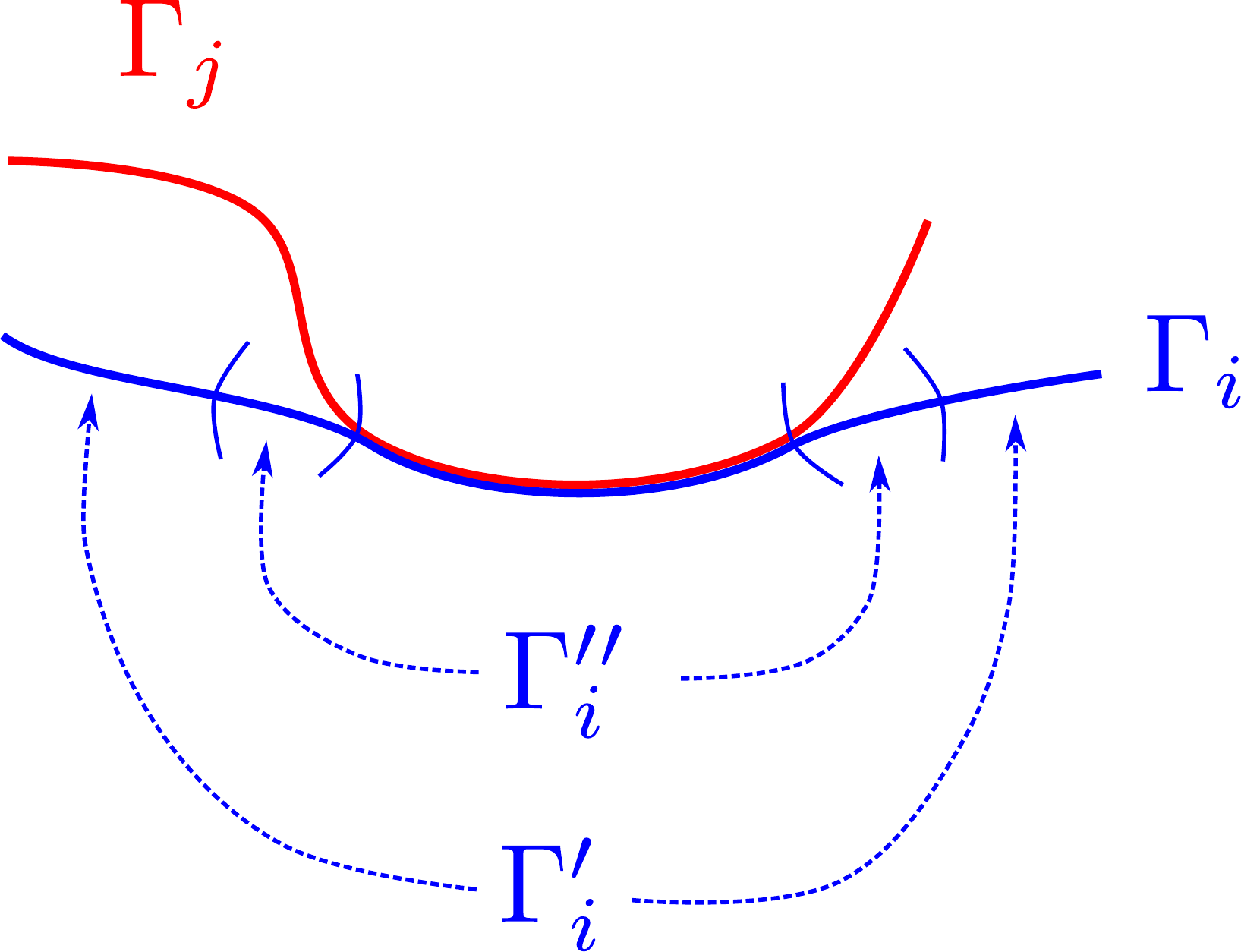}
  \caption{Illustration of $\Gamma_i'$ and $\Gamma_i''$}
  \label{fig: Gamma i decom}
\end{figure}

\paragraph{{\bf Estimating $J_R'$}}\hspace{0pt} \\
We begin by estimating $J_R'$. We estimate separately the integral of $J_R'$ over each of the sets $\Gamma_i',\Gamma_i''$ and $\Gamma_i\cap\Gamma_j$. Trivially
\begin{equation*}
  \int_{\Gamma_i'}J_R'(z_i)|dz_i|\leq CR^{3 - \beta}
\end{equation*}
where the constant $C$ depends only on $\Gamma_i$ and $\Gamma_j$. Next, for $z_i\in\Gamma_i''$, denote by $z_i^*$ the closest point on $\Gamma_j$ to $z_i$ (if there is more than one such point, we choose one arbitrarily). Fix $z_i\in\Gamma_i''$ and define the points in $\Gamma_j$ that are ``far'' from $z_i^*$ by
\begin{equation*}
  \Gamma_j^{(F)}(z_i)=\left\{z_j\in\Gamma_j\colon |z_j-z_i^*|>\frac2R\right\}
\end{equation*}
and the ``nearby'' points by
\begin{equation*}
  \Gamma_j^{(N)}(z_i)=\left\{z_j\in\Gamma_j\colon |z_j-z_i^*|\leq\frac2{R}\right\};
\end{equation*}
see Figure~\ref{fig: Gamma j N}. We split
\begin{equation*}
  J_R'(z_i)= R \left(\int_{\Gamma_j^{(F)}}+\int_{\Gamma_j^{(N)}}\right) R^2|z_j-z_i|^2 e^{-R^2|z_j-z_i|^2}\,|dz_j|
\end{equation*}
and estimate each integral separately. Note that
\begin{equation*}
  R \int_{\Gamma_j^{(N)}} R^2|z_j-z_i|^2 e^{-R^2|z_j-z_i|^2}\,|dz_j|\lesssim R \;\mathrm{length}(\Gamma_j^{(N)})=O(1).
\end{equation*}
Note also that $|z_j-z_i|\geq|z_j-z_i^*|-|z_i^*-z_i|\geq|z_j-z_i^*|-|z_j-z_i|$, for any $z_j\in\Gamma_j$, by the definition of $z_i^*$. This implies that $R |z_j-z_i|\geq\frac {R} 2|z_j-z_i^*|>1$ for $z_i\in\Gamma_i''$ and $z_j\in\Gamma_j^{(F)}$ and so, since the function $t\mapsto t^2e^{-t^2}$ is decreasing for $t>1$, we have
\begin{equation*}
  R \int_{\Gamma_j^{(F)}} R^2|z_j-z_i|^2 e^{-R^2|z_j-z_i|^2}\,|dz_j|\leq\frac{R}4\int_{\Gamma_j^{(F)}} R^2|z_j-z_i^*|^2 e^{-\tfrac{R^2}4|z_j-z_i^*|^2}\,|dz_j|.
\end{equation*}

\begin{figure}
  \centering
  \includegraphics[width=0.4\columnwidth]{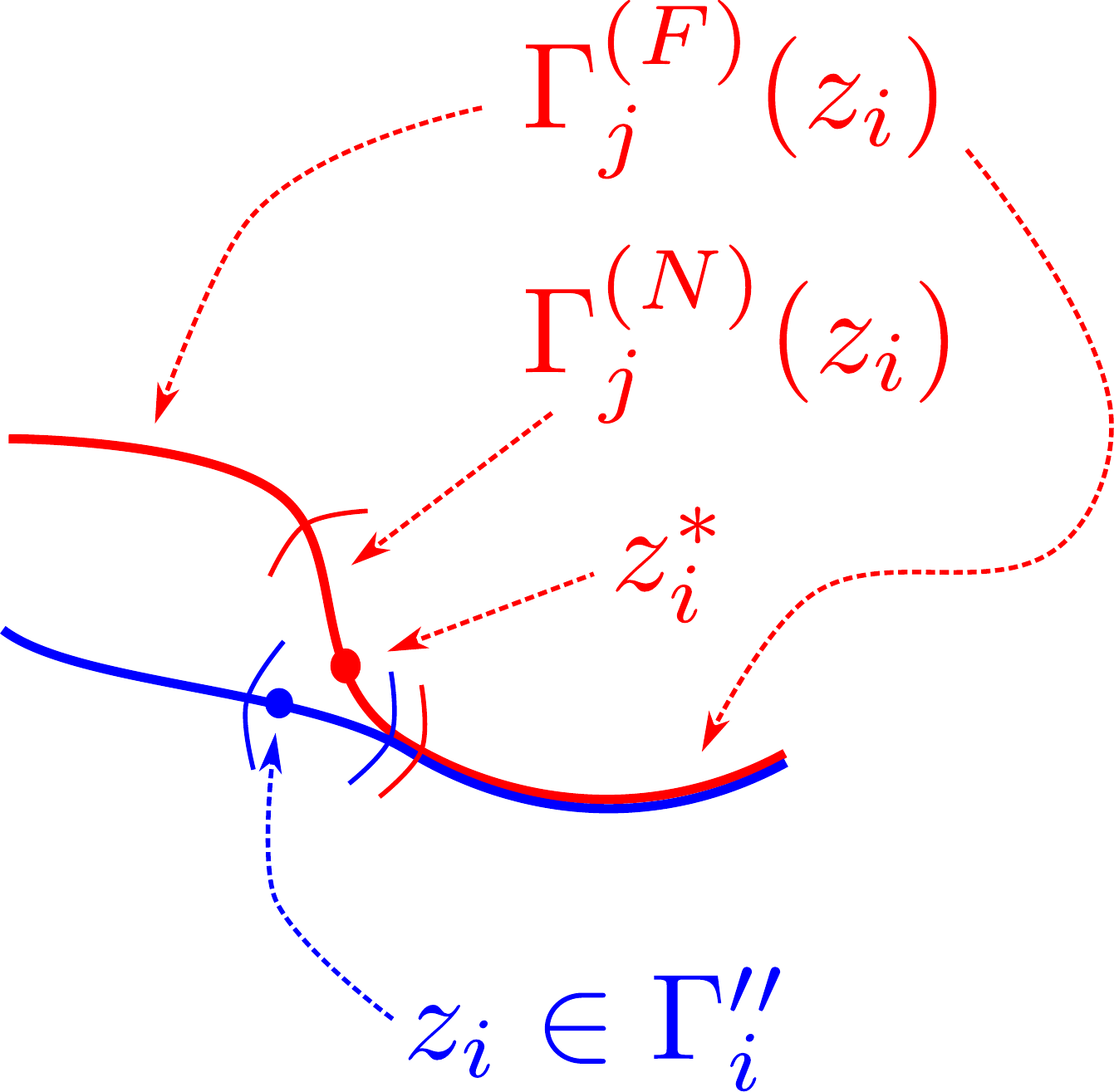}
  \caption{Illustration of $\Gamma_j^{(F)}(z_i)$ and $\Gamma_j^{(N)}(z_i)$}
  \label{fig: Gamma j N}
\end{figure}

We now use some ``Laplace type estimates'' to bound the integral on the right-hand side of this previous inequality. Let $\gamma_j\colon[0,1]\to\C$ be a parameterisation of the curve $\Gamma_j$ satisfying $0<m\leq|\dot{\gamma}(t)|\leq M$ for all $t\in[0,1]$. Since $\Gamma_j$ is simple, we see that there exists $m'>0$ such that $m'|t-s|\leq|\gamma_j(t)-\gamma_j(s)|\leq M|t-s|$. Denote by $t^*$ the (unique) value such that $\gamma_j(t^*)=z_i^*$ and note that\footnote{Strictly speaking we should write $\left\{z_j=\gamma_j(t)\colon 0\leq t\leq1\text{ and }|t-t^*|>\frac2{MR}\right\}$; we shall frequently ignore this issue, as it will not affect our upper bounds.}
\begin{equation*}
  \Gamma_j^{(F)}(z_i)\subseteq\left\{z_j=\gamma_j(t)\colon |t-t^*|>\frac2{MR}\right\}.
\end{equation*}
We thus have
\begin{align*}
  R\int_{\Gamma_j^{(F)}} R^2|z_j-z_i^*|^2 &e^{-\tfrac{R^2}4|z_j-z_i^*|^2}\,|dz_j|\\
  &\leq R\int_{|t-t^*|>\frac2{MR}} R^2|\gamma_j(t)-\gamma_j(t^*)|^2 e^{-\tfrac {R^2}4|\gamma_j(t)-\gamma_j(t^*)|^2}\Abs{\dot{\gamma_j}(t)}\,dt\\
  &\leq R\int_{|t-t^*|>\frac2{MR}} R^2M^2(t-t^*)^2 e^{-\tfrac {R^2}4(m')^2(t-t^*)^2}M\,dt,
\end{align*}
and making the change of variables $s=\tfrac{R}2m'(t-t^*)$ we have
\begin{equation*}
  R\int_{\Gamma_j^{(F)}} R^2|z_j-z_i^*|^2 e^{-4R^2|z_j-z_i^*|^2}\,|dz_j|\leq\frac{8M^3}{(m')^3}\int_{|s|>\tfrac{m'}M} s^2 e^{-s^2}\,ds=O(1).
\end{equation*}

This implies that
\begin{equation*}
  \int_{\Gamma_i''}J_R'(z_i)\,|dz_i|\lesssim\mathrm{length}(\Gamma_i'')\to0
\end{equation*}
as $R\to\infty$, since the set $\Gamma_i''$ decreases to the empty set as $\eps_R\to0$. We thus have
\begin{equation}\label{eq: J' outside int}
  \int_{\Gamma_i'\cup\Gamma_i''}J_R'(z_i)\,|dz_i|\to0.
\end{equation}

We now bound $\int_{\Gamma_i\cap\Gamma_j}J_R'(z_i)\,|dz_i|$. Fixing $z_i$, it is clear that we may ignore the points $z_j\in\Gamma_j$ where $|z_j-z_i|\geq \eps_R$, since their contribution is uniformly negligible. Denote the points ``close'' to $z_i$ by $\Gamma_j^{(C)}(z_i)=\{z_j\in\Gamma_j\colon |z_j-z_i|\leq\eps_R\}$, see Figure~\ref{fig: Gamma j C}.

\begin{figure}
  \centering
  \includegraphics[width=0.4\columnwidth]{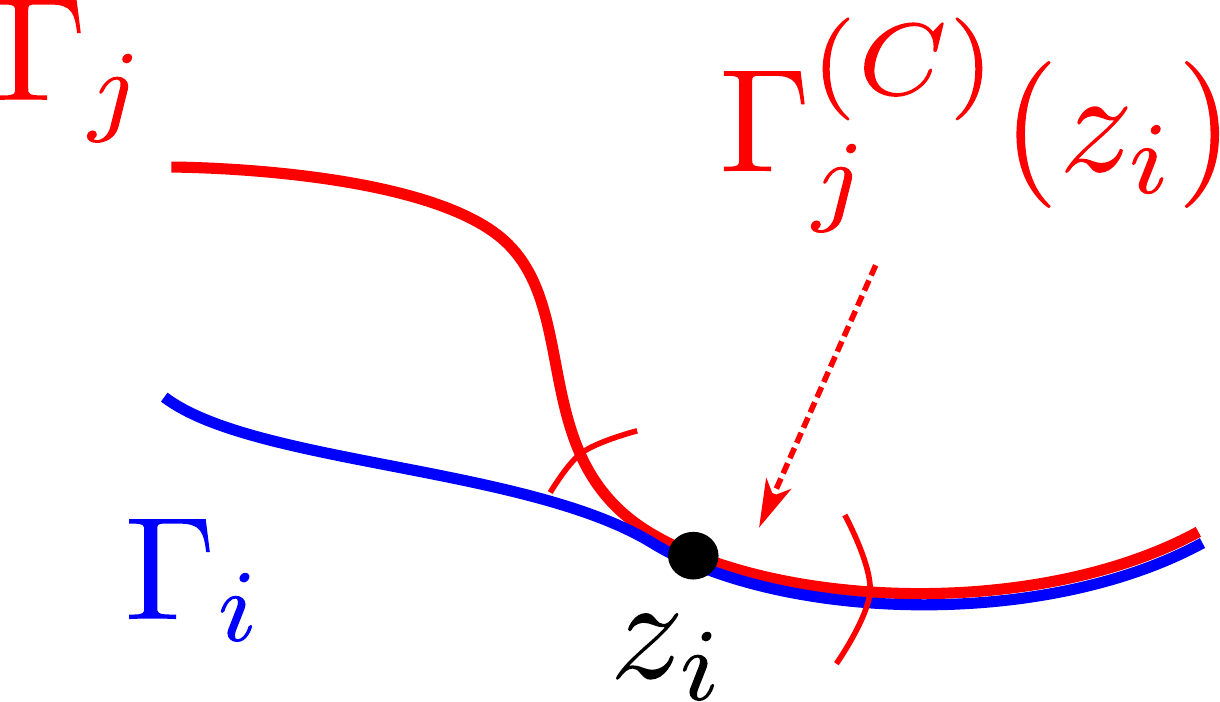}
  \caption{Illustration of $\Gamma_j^{(C)}(z_i)$}
  \label{fig: Gamma j C}
\end{figure}

Let $\gamma_j$ be the same parameterisation of $\Gamma_j$ as before and let $\tau(z_i)$ be the (unique) value such that $\gamma_j(\tau(z_i))=z_i$. Arguing similarly we get
\begin{align}\label{eq: J' on int }
  R\int_{\Gamma_j^{(C)}} R^2|z_j-z_i|^2 &e^{-R^2|z_j-z_i|^2}\,|dz_j|\notag\\
  &\leq R\int_{|t-\tau(z_i)|\leq\frac{\eps_R}{m'}} R^2|\gamma_j(t)-\gamma_j(\tau(z_i))|^2 e^{- R^2|\gamma_j(t)-\gamma_j(\tau(z_i))|^2}\Abs{\dot{\gamma_j}(t)}\,dt\\
  &\leq\left(\frac M{m'}\right)^3\int_{|s|\leq R\eps_R} s^2 e^{-s^2}\,ds=O(1)\notag,
\end{align}
which shows that $J_R'(z_i)$ is bounded for $z_i\in\Gamma_i\cap\Gamma_j$ and so, when combined with \eqref{eq: J' outside int} proves \eqref{eq:int to 0 bdd}.

\paragraph{{\bf Estimating $J_R$}}\hspace{0pt} \\
We next show \eqref{eq:int to 0}. Since trivially $|J_R(z_i)|\leq J_R'(z_i)$ we see that \eqref{eq: J' outside int} implies that
\begin{equation*}
  \Abs{\int_{\Gamma_i'\cup\Gamma_i''}J_R(z_i)\,|dz_i|}\leq\int_{\Gamma_i'\cup\Gamma_i''}J_R'(z_i)\,|dz_i|\to0\quad\text{as }\eps_R\to0.
\end{equation*}
Furthermore, for a fixed $z_i\in\Gamma_i\cap\Gamma_j$, as we noted previously
\begin{equation*}
  R\int_{\Gamma_j\setminus\Gamma_j^{(C)}} R^2|z_j-z_i|^2 e^{-R^2|z_j-z_i|^2}\,|dz_j|
\end{equation*}
is uniformly negligible. Finally note that for $z_i\in\Gamma_i\cap\Gamma_j$ and $z_j\in\Gamma_j^{(C)}(z_i)$,
\begin{align*}
  \left\langle\hat{n}_j(z_j),z_j-z_i\right\rangle&=|z_j-z_i|\left\langle\hat{n}_j(z_j),\frac{z_j-z_i}{|z_j-z_i|}\right\rangle\\
  &=|z_j-z_i|\left\langle\hat{n}_j(z_i),\hat{\tau}_j(z_i)\right\rangle(1+o(1))=o(|z_j-z_i|)\quad\text{as }\eps_R\to0
\end{align*}
where $\hat{\tau}_j$ denotes the unit tangent vector to $\Gamma_j$, and the estimate is uniform in $z_i$. This implies that for $z_i\in\Gamma_i\cap\Gamma_j$ we have $|J_R(z_i)|=o(J_R'(z_i))=o(1)$, by \eqref{eq: J' on int }.
This proves \eqref{eq:int to 0}.

\paragraph{{\bf Estimating $I_R'$}}\hspace{0pt} \\
We next show \eqref{eq:int precise asym bdd}, the argument is similar to the proof of \eqref{eq:int to 0 bdd}. It is again easy to see that
\begin{equation*}
  \int_{\Gamma_i'}I_R'(z_i)\,|dz_i|\leq CR^{1-\beta}\to0
\end{equation*}
while, for $z_i\in\Gamma_i''$, using the same notation as before, since $|z_j-z_i|\geq\tfrac12|z_j-z_i^*|$ we get
\begin{align*}
  I_R'(z_i)&=R\int_{\Gamma_j} e^{-R^2|z_j-z_i|^2}\,|dz_j| \leq R\int_{\Gamma_j} e^{-\tfrac {R^2}4|z_j-z_i^*|^2}\,|dz_j|\\
  &=R\int_0^1 e^{-\tfrac {R^2}4|\gamma_j(t)-\gamma_j(t^*)|^2}\Abs{\dot{\gamma_j}(t)}\,dt \leq \frac{2M}{m'}\int_{-\infty}^\infty  e^{-s^2}\,ds = O(1).
\end{align*}
We therefore have
\begin{equation*}
  \int_{\Gamma_i''}I_R'(z_i)\,|dz_i|\leq C\,\mathrm{length}(\Gamma_i'')\to0\quad\text{as }\eps_R\to0.
\end{equation*}
Finally, for $z_i\in\Gamma_i\cap\Gamma_j$, using again the same notation, it is easy to see once more that the contribution to $I_R'(z_i)$ of $\Gamma_j\setminus\Gamma_j^{(C)}(z_i)$ is negligible and that
\begin{align*}
  R\int_{\Gamma_j^{(C)}} e^{-R^2|z_j-z_i|^2}\,|dz_j|
  &\leq R\int_{|t-\tau(z_i)|\leq\frac{\eps_R}{m'}} e^{- R^2|\gamma_j(t)-\gamma_j(\tau(z_i))|^2}\Abs{\dot{\gamma_j}(t)}\,dt\\
  &\leq\frac M{m'}\int_{|s|\leq R\eps_R} e^{-s^2}\,ds=O(1).
\end{align*}
We have shown that $I_R'(z_i)=O(1)$ for $z_i\in\Gamma_i\cap\Gamma_j$, which proves \eqref{eq:int precise asym bdd}.

\paragraph{{\bf Asymptotic for $I_R$}}\hspace{0pt} \\
It remains to prove \eqref{eq:int precise asym}. Note that
\begin{equation*}
  \Abs{\int_{\Gamma_i'\cup\Gamma_i''}I_R(z_i)\,|dz_i|}\leq\int_{\Gamma_i'\cup\Gamma_i''}I_R'(z_i)\,|dz_i|\to0\quad\text{as }\eps_R\to0
\end{equation*}
and so it remains only to compute
\begin{equation*}
  \int_{\Gamma_i\cap\Gamma_j}I_R(z_i)\,|dz_i|.
\end{equation*}
If the curve $\Gamma_j$ is not closed we define $z_j^+$ and $z_j^-$ to be the endpoints of the curve and $\Gamma_j^\pm=\{z_j\in\Gamma_j\colon |z_j-z_j^\pm|< \eps_R\}$; if the curve is closed we define these sets to be empty. Note once more that
\begin{equation*}
  \Abs{\int_{\Gamma_i\cap\Gamma_j\cap\Gamma_j^\pm}I_R(z_i)\,|dz_i|}\leq \int_{\Gamma_i\cap\Gamma_j\cap\Gamma_j^\pm}I_R'(z_i)\,|dz_i|\lesssim\mathrm{length}(\Gamma_j^\pm)\to0\quad\text{as }\eps_R\to0.
\end{equation*}
For $z_i\in(\Gamma_i\cap\Gamma_j)\setminus(\Gamma_j^+\cup\Gamma_j^-)$ recall that $\Gamma_j^{(C)}(z_i)=\{z_j\in\Gamma_j\colon |z_j-z_i|\leq\eps_R\}$ and note that for $z_j\in\Gamma_j^{(C)}(z_i)$ we have $|\hat{n}_j(z_j)-\hat{n}_j(z_i)|=o(1)$, where the term $o(1)$ is uniform in $z_i$. We therefore have
\begin{align}\label{eq: precise without negl}
  I_R(z_i)&=R\int_{\Gamma_j^{(C)}(z_i)} \langle\hat{n}_i(z_i),\hat{n}_j(z_j)\rangle e^{-R^2|z_j-z_i|^2}\,|dz_j|\notag\\
  &\qquad\qquad\qquad\qquad+R\int_{\Gamma_j\setminus\Gamma_j^{(C)}(z_i)} \langle\hat{n}_i(z_i),\hat{n}_j(z_i)\rangle e^{-R^2|z_j-z_i|^2}\,|dz_j|\notag\\
  &=R\langle\hat{n}_i(z_i),\hat{n}_j(z_i)\rangle\int_{\Gamma_j^{(C)}(z_i)} e^{-R^2|z_j-z_i|^2}\,|dz_j|(1+o(1))+O(R^{1-\beta}),
\end{align}
and we shall compute the asymptotics of this last integral using (more accurate) ``Laplace type estimates''.

Let $\gamma_j\colon[0,1]\to\C$ be the same parameterisation of $\Gamma_j$ as before, and let $\tau(z_i)$ be the value such that $\gamma_j(\tau(z_i))=z_i$. Note that
\begin{equation*}
  \left\{z_j=\gamma_j(t)\colon |t-\tau(z_i)|\leq\frac{\eps_R}{M}\right\}\subseteq\Gamma_j^{(C)}(z_i)\subseteq\left\{z_j=\gamma_j(t)\colon |t-\tau(z_i)|\leq\frac{\eps_R}{m'}\text{ and }0<t<1\right\}.
\end{equation*}
We remark here that, if $\{t_+,t_-\}=\{0,1\}$, then $\{\gamma_j(t_+),\gamma_j(t_-)\}=\{z_j^+,z_j^-\}$. Since
\begin{equation*}
\eps_R\leq|\gamma_j(\tau(z_i))-\gamma_j(t_\pm)|\leq M|\tau(z_i) - t_\pm|
\end{equation*}
we have $\tau(z_i)\geq\eps_R/M$ and $\tau(z_i)\leq1-\eps_R/M$ and so the range $|t-\tau(z_i)|\leq\tfrac{\eps_R}{M}$ is contained in $[0,1]$. The above implies that, given $0<\delta<1$, for large enough $R$ (uniformly in $\tau(z_i)$) we have
\begin{equation*}
  (1-\delta)|\dot{\gamma}_j(\tau(z_i))||t-\tau(z_i)|\leq|\gamma_j(t)-\gamma_j(\tau(z_i))|\leq(1+\delta)|\dot{\gamma}_j(\tau(z_i))||t-\tau(z_i)|
\end{equation*}
and
\begin{equation*}
  (1-\delta)|\dot{\gamma}_j(\tau(z_i))|\leq|\dot{\gamma}_j(t)|\leq(1+\delta)|\dot{\gamma}_j(\tau(z_i))|
\end{equation*}
for $t$ such that $\gamma_j(t)\in\Gamma_j^{(C)}(z_i)$. This implies that, defining
\begin{equation*}
\tau_+=\min\{1,\tau(z_i)+\tfrac{\eps_R}{m'}\}\quad\text{ and }\quad\tau_-=\max\{0,\tau(z_i)-\tfrac{\eps_R}{m'}\},
\end{equation*}
we have
\begin{align*}
  \int_{\Gamma_j^{(C)}(z_i)} e^{-R^2|z_j-z_i|^2}\,|dz_j|& \leq \int_{\tau_-}^{\tau_+} e^{-R^2|\gamma_j(t)-\gamma_j(\tau(z_i))|^2}|\dot{\gamma}_j(t)| \,dt\\
  &\leq (1+\delta) \int_{\tau_-}^{\tau_+} e^{-R^2 (1-\delta)^2 |\dot{\gamma}_j(\tau(z_i))|^2 (t-\tau(z_i))^2} |\dot{\gamma}_j(\tau(z_i))| \,dt
\end{align*}
and the change of variables $s=R(1-\delta)|\dot{\gamma}_j(\tau(z_i))|(t-\tau(z_i))$ yields
\begin{equation*}
  \int_{\Gamma_j^{(C)}(z_i)} e^{-R^2|z_j-z_i|^2}\,|dz_j| \leq(1+\delta)\int_{R(1-\delta)|\dot{\gamma}_j(\tau(z_i))|(\tau_{-}-\tau(z_i))}^{R(1-\delta)|\dot{\gamma}_j(\tau(z_i))|(\tau_{+}-\tau(z_i))} e^{-s^2} \frac{ds}{R(1-\delta)}.
\end{equation*}
Notice that
\begin{equation*}
\tau_{+}-\tau(z_i)=\min\{1-\tau(z_i),\tfrac{\eps_R}{m'}\}\geq\tfrac{\eps_R}{M}\quad\text{ and }\quad\tau_{-}-\tau(z_i)=\max\{-\tau(z_i),-\tfrac{\eps_R}{m'}\}\leq-\tfrac{\eps_R}{M}.
\end{equation*}
Since $R\eps_R\to\infty$ (and $|\dot{\gamma}_j(\tau(z_i))|\neq0$) the right hand side of the previous displayed expression equals
\begin{equation*}
  \frac{(1+\delta)}{R(1-\delta)}\int_{\R}e^{-s^2}\,ds(1+o(1)) = \frac{(1+\delta)\sqrt{\pi}}{R(1-\delta)}(1+o(1)).
\end{equation*}

Similar computations yield
\begin{equation*}
  \int_{\Gamma_j^{(C)}(z_i)} e^{-R^2|z_j-z_i|^2}\,|dz_j| \geq \frac{(1-\delta)\sqrt{\pi}}{R(1+\delta)}(1+o(1))
\end{equation*}
and since $\delta$ is arbitrary we conclude that
\begin{equation*}
  \int_{\Gamma_j^{(C)}(z_i)} e^{-R^2|z_j-z_i|^2}\,|dz_j| = \frac{\sqrt{\pi}}{R}(1+o(1)).
\end{equation*}
Combining this with \eqref{eq: precise without negl}, and discarding the integration over $\Gamma_j\setminus\Gamma_j^{(C)}(z_i)$, we have
\begin{equation*}
  I_R(z_i)=\sqrt{\pi} \langle\hat{n}_i(z_i),\hat{n}_j(z_i)\rangle(1+o(1)),
\end{equation*}
where the term $o(1)$ is uniform in $z_i$. We conclude that
\begin{align*}
  \int_{\Gamma_i}I_R(z_i)\,|dz_i|&=\sqrt \pi\int_{(\Gamma_i\cap\Gamma_j)\setminus(\Gamma_j^+\cup\Gamma_j^-)}\langle\hat{n}_i(z_i),\hat{n}_j(z_i)\rangle\,|dz_i|(1+o(1))\\
  &=\sqrt \pi\left(\cL(\Gamma_i,\Gamma_j)-\int_{(\Gamma_i\cap\Gamma_j)\cap(\Gamma_j^+\cup\Gamma_j^-)}\langle\hat{n}_i(z_i),\hat{n}_j(z_i)\rangle\,|dz_i|\right)(1+o(1))\\
  &=(\sqrt \pi+o(1))\cL(\Gamma_i,\Gamma_j),
\end{align*}
which is \eqref{eq:int precise asym}. This completes the proof of the lemmas, and therefore of \eqref{eq:var}.

\section{Asymptotic Normality}
In this section we show that $\Delta_R(\Gamma)$ is asymptotically normal, which will complete the proof of Theorem~\ref{thm: main}. We first define a random variable $\Delta_R^{(m)}$ that approximates $\Delta_R(\Gamma)$ in $L^2(\Pro)$ and then prove a CLT for $\Delta_R^{(m)}$. Specifically, defining $\overline{\Delta}_R= \Delta_R(\Gamma) - \E[\Delta_R(\Gamma)]$, we will show that:
\begin{itemize}
  \item There exists $R_0$ such that
        \begin{equation}\label{eq: L2 approx for CLT}
          \E\big[(\overline{\Delta}_R-\Delta_R^{(m)})^2\big]\leq C \frac{R}{\sqrt{m}}
        \end{equation}
        for all $R\geq R_0$ and $m\geq1$.
  \item For each fixed $m$
        \begin{equation}\label{eq: CLT m}
          \frac{\Delta_R^{(m)}}{\sqrt{\var \Delta_R^{(m)}}} \to \cN_\R(0,1)
        \end{equation}
        in distribution, as $R\to\infty$.
\end{itemize}
When combined with our previous asymptotic for the variance, this allows us to conclude asymptotic normality for $\Delta_R(\Gamma)$, by a standard argument. We begin by defining the approximant $\Delta_R^{(m)}$.

\subsection{Definition of approximant}
Recall that
\begin{equation*}
\overline{\Delta}_R = \sum_{i=1}^N a_i\int_{\Gamma_i}\frac{\partial}{\partial \hat{n}_i}\log|\widehat{f}_R(z_i)|\,|dz_i|,
\end{equation*}
this follows from \eqref{eq: def Delta i} and \eqref{eq: mean Delta i}. We will define
\begin{equation*}
  \Delta_R^{(m)} = \sum_{i=1}^N a_i \Delta_i^{(m)} = \sum_{i=1}^N a_i\int_{\Gamma_i}\frac{\partial}{\partial \hat{n}_i}\log_m|\widehat{f}_R(z_i)|\,|dz_i|
\end{equation*}
where $\log_m$ is a polynomial that approximates $\log$ in an appropriate sense. To this end we recall the Wiener chaos decomposition (sometimes called the Hermite-It\={o} expansion) of $L^2(\mu)$ where $d\mu(z)=\tfrac1\pi e^{-|z|^2}dm(z)$ is the Gaussian measure on the plane; for a more comprehensive treatment we refer the reader to \cite{Jan}*{Chapters 2 and 3}.

Let $\mathcal P_m$ denote the subspace of $L^2(\mu)$ given by polynomials (in the variables $z$ and $\bar{z}$) of degree at most $m$, and denote $H^{\Wi{0}}=\mathcal P_0$ and $H^{\Wi{m}}=\mathcal P_m\ominus\mathcal P_{m-1}$ for $m\geq1$. Given a monomial $\zeta^\alpha\bar{\zeta}^\beta$ with $\alpha+\beta=m$ we write $\Wi{\zeta^\alpha\bar{\zeta}^\beta}$ to denote its projection to $H^{\Wi{m}}$, which is usually called a Wick product. A computation (see \cite{Jan}*{Example 3.32}) shows that the set of all Wick products $\Wi{\zeta^\alpha\bar{\zeta}^\beta}$ with $\alpha+\beta=m$ is an orthogonal basis for $H^{\Wi{m}}$, and moreover $\|\Wi{\zeta^\alpha\bar{\zeta}^\beta}\|^2=\alpha!\beta!$ (the norm here is the norm inherited from $L^2(\mu)$). Furthermore \cite{Jan}*{Theorem 2.6}
\begin{equation*}
L^2(\mu)=\bigoplus_{m=0}^\infty H^{\Wi{m}}.
\end{equation*}

We now expand $\log|\zeta|$ in terms of this orthonormal basis. Since the function is radial, only the terms with $\alpha=\beta$ contribute, and a calculation \cite{NS}*{Lemma 2.1} yields
\begin{equation}\label{eq: Wick log}
  \log|\zeta|=-\frac \gamma 2 + \sum_{\alpha=1}^\infty \frac{c_\alpha}{\alpha!}\Wiaa{\zeta}{\alpha}
\end{equation}
where $c_\alpha=\frac{(-1)^{\alpha+1}}{2\alpha}$.
\begin{rem}
  We may alternatively interpret \eqref{eq: Wick log} as an expansion of the logarithm in terms of Laguerre polynomials, by noting that $\Wiaa{\zeta}{\alpha}=(-1)^\alpha L_{\alpha}(|\zeta|^2)$ where $L_\alpha(x)=e^x\frac{d^\alpha}{dx^\alpha}(x^\alpha e^{-x})$.
\end{rem}

We finally define
\begin{equation*}
  \log_m|\zeta|=-\frac \gamma 2 + \sum_{\alpha=1}^m \frac{c_\alpha}{\alpha!}\Wiaa{\zeta}{\alpha}
\end{equation*}
which defines $\Delta_R^{(m)}$. Note that $\log_m|\widehat{f}_R(z)|$ approximates $\log|\widehat{f}_R(z)|$ in $L^2(\Pro)$.

\subsection{Quantifying the approximation}
We first define
\begin{equation*}
\Delta_i = \int_{\Gamma_i}\frac{\partial}{\partial \hat{n}_i}\log|\widehat{f}_R(z_i)|\,|dz_i|
\end{equation*}
and note that
\begin{equation*}
  \E\big[ ( \overline{\Delta}_R-\Delta_R^{(m)} )^2 \big] = \sum_{i,j=1}^N a_i a_j \E\big[ (\Delta_i - \Delta_i^{(m)})(\Delta_j - \Delta_j^{(m)})\big].
\end{equation*}
We have already computed (see \eqref{eq: cov ij}) that
\begin{equation*}
  \E[ \Delta_i \Delta_j ]= \int_{\Gamma_i}\int_{\Gamma_j} \frac{\partial^2}{\partial \hat{n}_i\partial \hat{n}_j} \E\left[\log|\widehat{f}_R(z_j)|\log|\widehat{f}_R(z_i)|\right]\,|dz_j||dz_i|.
\end{equation*}
Also, since
\begin{equation*}
  \E[ \Delta_i \Delta_j^{(m)} ]= \E\left[\int_{\Gamma_i}\int_{\Gamma_j} \frac{\partial^2}{\partial \hat{n}_i\partial \hat{n}_j} \log_m|\widehat{f}_R(z_j)|\log|\widehat{f}_R(z_i)|\,|dz_j||dz_i|\right],
\end{equation*}
Lemma~\ref{lem: hypoth sat log poly} allows us to apply Lemma~\ref{lem: abstract interchange} to see that
\begin{equation*}
  \E[ \Delta_i \Delta_j^{(m)} ]= \int_{\Gamma_i}\int_{\Gamma_j} \frac{\partial^2}{\partial \hat{n}_i\partial \hat{n}_j}\E\left[ \log_m|\widehat{f}_R(z_j)|\log|\widehat{f}_R(z_i)|\right]\,|dz_j||dz_i|.
\end{equation*}
Arguing identically, but using Lemma~\ref{lem: hypoth sat polys} (with $N=2$) instead of Lemma~\ref{lem: hypoth sat log poly}, we get
\begin{equation}\label{eq: cov Delta m}
  \E[ \Delta_i^{(m)} \Delta_j^{(m)} ]= \int_{\Gamma_i}\int_{\Gamma_j} \frac{\partial^2}{\partial \hat{n}_i\partial \hat{n}_j}\E\left[ \log_m|\widehat{f}_R(z_j)|\log_m|\widehat{f}_R(z_i)|\right]\,|dz_j||dz_i|.
\end{equation}
We conclude that
\begin{align*}
  \E\big[ (\Delta_i - \Delta_i^{(m)})&(\Delta_j - \Delta_j^{(m)})\big]\\
  &= \int_{\Gamma_i}\int_{\Gamma_j} \frac{\partial^2}{\partial \hat{n}_i\partial \hat{n}_j}\E\Big[ (\log|\widehat{f}_R(z_j)| - \log_m|\widehat{f}_R(z_j)|)\cdot\\
  &\qquad\qquad\qquad\qquad\qquad\qquad \cdot(\log|\widehat{f}_R(z_i)| - \log_m|\widehat{f}_R(z_i)| ) \Big]\,|dz_j||dz_i|\\
  &= \int_{\Gamma_i}\int_{\Gamma_j} \frac{\partial^2}{\partial \hat{n}_i\partial \hat{n}_j} \E\Big[ \bigg(\sum_{\alpha_j>m} \frac{c_{\alpha_j}}{\alpha_j!}\Wiaa{\widehat{f}_R(z_j)}{\alpha_j}\bigg)\cdot\\
  &\qquad\qquad\qquad\qquad\qquad\qquad \cdot \bigg(\sum_{\alpha_i>m} \frac{c_{\alpha_i}}{\alpha_i!}\Wiaa{\widehat{f}_R(z_i)}{\alpha_i}\bigg) \Big]\,|dz_j||dz_i|.
\end{align*}

Now since the expansions inside the expectation are valid in $L^2(\Pro)$ we have
\begin{align*}
  \E\Big[ \bigg(\sum_{\alpha_j>m} \frac{c_{\alpha_j}}{\alpha_j!}\Wiaa{\widehat{f}_R(z_j)}{\alpha_j}\bigg) &\bigg(\sum_{\alpha_i>m} \frac{c_{\alpha_i}}{\alpha_i!}\Wiaa{\widehat{f}_R(z_i)}{\alpha_i}\bigg) \Big]\\
  &=\sum_{\alpha_i,\alpha_j>m} \frac{c_{\alpha_i}c_{\alpha_j}}{\alpha_i!\alpha_j!}\E\left[\Wiaa{\widehat{f}_R(z_j)}{\alpha_j}\, \Wiaa{\widehat{f}_R(z_i)}{\alpha_i} \right].
\end{align*}
Now, by \cite{Jan}*{Theorem 3.9}, we have
\begin{equation}\label{eq: exp wick prod}
  \E\left[\Wiaa{\widehat{f}_R(z_j)}{\alpha_j}\, \Wiaa{\widehat{f}_R(z_i)}{\alpha_i} \right]=
  \begin{cases}
    \alpha_i!\alpha_j!|\widehat{K}_R(z_j,z_i)|^{2\alpha_i}&\text{ if }\alpha_i=\alpha_j\\
    0&\text{otherwise}
  \end{cases}
\end{equation}
which yields
\begin{align*}
  \E\big[ (\Delta_i - \Delta_i^{(m)}) (\Delta_j - \Delta_j^{(m)})\big] &= \int_{\Gamma_i}\int_{\Gamma_j} \frac{\partial^2}{\partial \hat{n}_i\partial \hat{n}_j} \sum_{\alpha>m} c_{\alpha}^2 |\widehat{K}_R(z_j,z_i)|^{2\alpha}\,|dz_j||dz_i|\\
  &=\frac14 \int_{\Gamma_i}\int_{\Gamma_j} \frac{\partial^2}{\partial \hat{n}_i\partial \hat{n}_j} \sum_{\alpha>m} \frac1{\alpha^2} e^{-2\alpha R^2|z_j-z_i|^2}\,|dz_j||dz_i|.
\end{align*}
\begin{rem}
  The identity \eqref{eq: exp wick prod} together with \eqref{eq: Wick log} essentially proves Lemma~\ref{lem: dilog}.
\end{rem}

Using Lemma~\ref{cl:abs int}, we have
\begin{align*}
  |\E\big[ (\Delta_i - \Delta_i^{(m)}) (\Delta_j - \Delta_j^{(m)})\big]| &\leq \frac14 \int_{\Gamma_i}\int_{\Gamma_j} \Abs{\frac{\partial^2}{\partial \hat{n}_i\partial \hat{n}_j} \sum_{\alpha>m} \frac1{\alpha^2} e^{-2\alpha R^2|z_j-z_i|^2}}\,|dz_j||dz_i|\\
  &\leq CR \sum_{\alpha>m} \frac1{\alpha^{3/2}} \leq C\frac{R}{\sqrt{m}}.
\end{align*}
Finally
\begin{equation*}
  \E\big[ ( \overline{\Delta}_R-\Delta_R^{(m)} )^2 \big] \leq \sum_{i,j=1}^N a_i a_j \Abs{\E\big[ (\Delta_i - \Delta_i^{(m)})(\Delta_j - \Delta_j^{(m)})\big]}\leq C\frac{R}{\sqrt{m}},
\end{equation*}
which is \eqref{eq: L2 approx for CLT}.

\subsection{CLT for the approximant}
We finish by proving \eqref{eq: CLT m}. We claim that it's enough to prove that for any non-negative integers $p_1,\dots,p_N$ we have, as $R\to\infty$,
\begin{equation}\label{eq: mix mom Delta m}
  \E\Big[\prod_{i=1}^N(\Delta_i^{(m)})^{p_i}\Big] = \E\Big[\prod_{i=1}^N \xi_i^{p_i}\Big] + o(R^{P/2})
\end{equation}
where $\xi_i$ is a sequence of jointly (real) Gaussian random variables, with mean $0$ and covariance
\begin{equation*}
  \cov (\xi_i,\xi_j)=\E[\xi_i\xi_j]=\cov(\Delta_i^{(m)},\Delta_j^{(m)}),
\end{equation*}
and $P=p_1+\cdots +p_N$.
\begin{rem}
  Notice that
  \begin{equation*}
    \var\Delta_i^{(m)} = \frac{\sqrt{\pi}} 2\left(\sum_{\alpha=1}^m\frac1{\alpha^{3/2}}\right) \cL(\Gamma_i,\Gamma_i) R (1+o(1))\quad\text{as }R\to\infty,
  \end{equation*}
  which follows from combining \eqref{eq: cov Delta m} and Lemma~\ref{cl:asym of int}. By hypothesis, $\Gamma$ is a non-zero $\R$-chain and so we may assume that $\cL(\Gamma_i,\Gamma_i)\neq0$ for each $i$, which means that
  \begin{equation*}
    \prod_{i=1}^N \big(\var\Delta_i^{(m)}\big)^{p_i/2}\simeq R^{P/2}.
  \end{equation*}
  Now, defining $\widehat{\xi}_i=\tfrac{\xi_i}{\sqrt{\var\xi_i}}=\tfrac{\xi_i}{\sqrt{\var\Delta_i}}$, we see that \eqref{eq: mix mom Delta m} is equivalent to
  \begin{equation*}
    \E\Bigg[\prod_{i=1}^N\left(\frac{\Delta_i^{(m)}}{\sqrt{\var\Delta_i^{(m)}}}\right)^{p_i}\Bigg] = \E\Big[\prod_{i=1}^N \widehat{\xi_i}^{p_i}\Big] +o(1),
  \end{equation*}
  and note that $\Abs{\E\Big[\prod_{j=1}^N \widehat{\xi_i}^{p_i}\Big]}$ is a bounded quantity.
\end{rem}

To see that it suffices to show \eqref{eq: mix mom Delta m}, notice that it implies that, for any non-negative integer $p$,
\begin{align*}
  \E[ (\Delta_R^{(m)})^{p} ] &= \sum_{i_1,\dots,i_p=1}^N a_{i_1}\dots a_{i_p} \E\Big[ \prod_{k=1}^p \Delta_{i_k}^{(m)} \Big]\\
  &\sim \sum_{i_1,\dots,i_p=1}^N a_{i_1}\dots a_{i_p} \E\Big[ \prod_{k=1}^p \xi_{i_k} \Big] = \E\Big[ \big(\sum_{i=1}^N a_{i} \xi_{i}\big)^p \Big].
\end{align*}
Now since $\sum_{i=1}^N a_{i} \xi_{i}$ is a mean $0$ real Gaussian with the same variance as $\Delta_R^{(m)}$, we see that the moments of $\frac{\Delta_R^{(m)}}{\sqrt{\var \Delta_R^{(m)}}}$ converge to the moments of the standard real Gaussian, which implies \eqref{eq: CLT m}.

It remains to establish \eqref{eq: mix mom Delta m}. We begin by re-formulating the right-hand side, and so we introduce some notation. Throughout this computation the integers $p_1,\dots,p_N$ and $m$ are considered to be fixed, and we often ignore the dependence of other parameters on them. We define $\widehat{p}_i=p_1+\dots+p_i$ for $1\leq i \leq N$, and note that $P=\widehat{p}_N$ which we will use interchangeably according to the context. We define a new sequence of random variables $(\widetilde{\xi}_r)_{1\leq r \leq P}$ by
\begin{align*}
  \widetilde{\xi}_1& =\widetilde{\xi}_2 = \dots = \widetilde{\xi}_{p_1} = \xi_1,\\
  \widetilde{\xi}_{p_1+1}& =\widetilde{\xi}_{p_1+2} = \dots = \widetilde{\xi}_{\widehat{p}_2} = \xi_2,\\
   \vdots  & \\
  \widetilde{\xi}_{\widehat{p}_{N-1}+1}& =\widetilde{\xi}_{\widehat{p}_{N-1}+2} = \dots = \widetilde{\xi}_{\widehat{p}_{N}} = \xi_N.
\end{align*}
A \emph{partition $\cP=\biguplus_k\{r_k,s_k\}$} is a partition of the set $\{1,\dots,P\}$ into pairs $\{r_k,s_k\}$. We always label the partition so that $r_k<s_k$ and $r_k<r_{k'}$ for $k<k'$. Of course if $P$ is odd then no such partition exists. Now \cite{Jan}*{Theorem 1.28} implies that
\begin{equation}\label{eq: RHS rewritten}
  \E\Big[\prod_{j=1}^N \xi_i^{p_i}\Big]=\sum_\cP\prod_k\E[\widetilde{\xi}_{r_k}\widetilde{\xi}_{s_k}].
\end{equation}
In particular this expectation is zero if $P$ is odd.

We now consider the left-hand side of \eqref{eq: mix mom Delta m}. Since
\begin{equation*}
  \Delta_i^{(m)} = \int_{\Gamma_i} \frac{\partial}{\partial \hat{n}_i} \left(-\frac\gamma2+\sum_{\alpha=1}^m \frac{c_\alpha}{\alpha!} \Wiaa{\widehat{f}_R(z)}{\alpha}\right)\,|dz_i| = \sum_{\alpha=1}^m \frac{c_\alpha}{\alpha!} \int_{\Gamma_i} \frac{\partial}{\partial \hat{n}_i} \Wiaa{\widehat{f}_R(z)}{\alpha} \,|dz_i|
\end{equation*}
we see that, denoting by $\Gamma_i^{p_i}$ the Cartesian product of $p_i$ copies of $\Gamma_i$,
\begin{equation*}
  \E\Big[\prod_{i=1}^N(\Delta_i^{(m)})^{p_i}\Big] = \sum_{\alpha_1,\dots,\alpha_{P}=1}^N \prod_{r=1}^{P} \frac{c_{\alpha_r}}{\alpha_r!} \E\Big[ \int_{\prod_{i=1}^{N}\Gamma_i^{p_i}} \frac{\partial^{P}}{\partial \hat{n}_1\dots \partial \hat{n}_{P}} \prod_{r=1}^{P} \Wiaa{\widehat{f}_R(z_r)}{\alpha_r} \,\prod_{r=1}^{P}|dz_r| \Big].
\end{equation*}
Lemma~\ref{lem: hypoth sat polys} allows us to apply Lemma~\ref{lem: abstract interchange} to see that
\begin{equation}\label{eq: B1}
  \E\Big[\prod_{i=1}^N(\Delta_i^{(m)})^{p_i}\Big] = \sum_{\alpha_1,\dots,\alpha_{P}=1}^m \prod_{r=1}^{P} \frac{c_{\alpha_r}}{\alpha_r!} \int_{\prod_{i=1}^{N}\Gamma_i^{p_i}} \frac{\partial^{P}}{\partial \hat{n}_1\dots \partial \hat{n}_{P}} \E\Big[ \prod_{r=1}^{P} \Wiaa{\widehat{f}_R(z_r)}{\alpha_r}\Big] \,\prod_{r=1}^{P}|dz_r|.
\end{equation}
We will compute the asymptotics of this expression via the diagram formula. For $1\leq i,j\leq N$ and $1\leq\alpha\leq m$ we write
\begin{equation*}
  \rho_{ij}(\alpha)=\int_{\Gamma_i}\int_{\Gamma_j} \frac{\partial^{2}}{\partial \hat{n}_i \partial \hat{n}_{j}} \Abs{\widehat{K}_R(z_i,z_j)}^{2\alpha} \,|dz_j||dz_i|.
\end{equation*}
We then have, from \eqref{eq: exp wick prod},
\begin{equation}\label{eq: m cov ij}
  \cov(\Delta_i^{(m)},\Delta_j^{(m)}) = \sum_{\alpha=1}^m c_{\alpha}^2 \rho_{ij}(\alpha).
\end{equation}

\subsubsection{Diagrams}
Given non-negative integers $\alpha_1,\dots,\alpha_P$, a \emph{diagram $\cD$} is a graph with $2(\alpha_1+\dots+\alpha_P)$ vertices such that:
\begin{itemize}
  \item For each $1\leq r \leq P$ there are $\alpha_r$ vertices labelled $r$ and $\alpha_r$ vertices labelled $\bar{r}$.
  \item Each vertex has degree exactly $1$.
  \item Each edge joins a vertex labelled $r$ to a vertex labelled $\bar{s}$ for $r\neq s$.
\end{itemize}
Note that there are choices of $\alpha_1,\dots,\alpha_P$ such that no such diagram exists, for example if $\alpha_1>\alpha_2+\cdots+\alpha_P$. We denote the edges (respectively the vertices) of $\cD$ by $e(\cD)$ (respectively $v(\cD)$).

Recall that
\begin{equation*}
\widehat{K}_R(z_r,z_s)=\frac{K_R(z_r,z_s)}{\sqrt{K_R(z_r,z_s)K_R(z_r,z_s)}} = \exp\{R^2(z_r\bar{z}_s -\tfrac12|z_r|^2 -\tfrac12|z_s|^2)\}.
\end{equation*}
The \emph{value} of a diagram is
\begin{equation*}
  V(\cD)=\prod_{(r,\bar{s})\in e(\cD)} \widehat{K}_R(z_r,z_s).
\end{equation*}
The diagram formula \cite{Jan}*{Theorem 3.12} implies that
\begin{equation}\label{eq: diagram formula}
  \E\Big[ \prod_{r=1}^{P} \Wiaa{\widehat{f}_R(z_r)}{\alpha_r}\Big]=\sum_\cD V(\cD).
\end{equation}

We say that a diagram is \emph{regular} if the set $\{1,\dots,P\}$ can be partitioned into pairs $\{r_k,s_k\}$ such that each edge of the diagram is of the form $(r_k,\overline{s}_k)$ or $(s_k,\overline{r}_k)$ for some $k$; otherwise the diagram is said to be irregular, see Figure~\ref{fig: diags}. Note that if $P$ is odd then all diagrams are irregular. We again label the partition so that $r_k<s_k$ and $r_k<r_{k'}$ for $k<k'$.
\begin{figure}
  \centering
  \includegraphics[width=0.5\columnwidth]{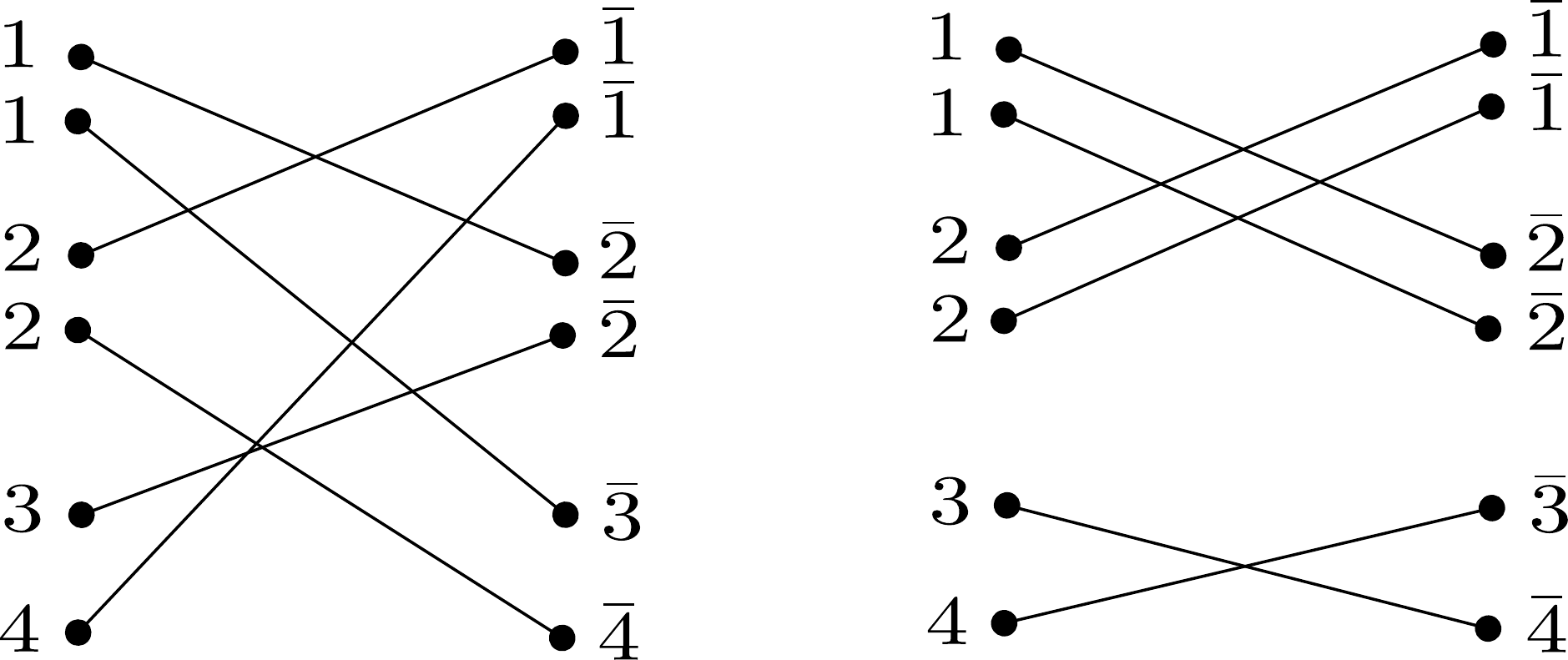}
  \caption{An irregular and a regular diagram for $P=4$, $\alpha_1=\alpha_2=2$ and $\alpha_3=\alpha_4=1$.}
  \label{fig: diags}
\end{figure}

Combining \eqref{eq: B1} and \eqref{eq: diagram formula} we have
\begin{equation}\label{eq: full moment expansion}
  \E\Big[\prod_{i=1}^N(\Delta_i^{(m)})^{p_i}\Big] = \sum_{\alpha_1,\dots,\alpha_{P}=1}^m \prod_{r=1}^{P} \frac{c_{\alpha_r}}{\alpha_r!} \sum_\cD \int_{\prod_{r=1}^{P}\Gamma_i^{p_i}} \frac{\partial^{P}}{\partial \hat{n}_1\dots \partial \hat{n}_{P}} V(\cD) \,\prod_{r=1}^{P}|dz_r|.
\end{equation}
We now split this sum into two pieces, by splitting $\displaystyle{\sum_\cD = \sum_{\mathrm{regular}\,\cD} + \sum_{\mathrm{irregular}\,\cD}}$. We estimate each contribution separately - we shall see that the regular contribution will give us the main term on the right-hand side of \eqref{eq: mix mom Delta m} while the irregular contribution will give the error term. We begin by computing the regular part exactly.

\paragraph{{\bf The regular contribution}}\hspace{0pt} \\
We define the \emph{multiplicity vector} $\overrightarrow{B}=(\beta_1,\dots,\beta_{Q})$ of a regular diagram by $\beta_k=\alpha_{r_k}=\alpha_{s_k}$; here $P=2Q$. Notice that for a regular diagram
\begin{equation*}
  V(\cD)=\prod_k |\widehat{K}_R(z_{r_k},z_{s_k})|^{2\beta_k}.
\end{equation*}
Given a regular diagram $\cD$ with partition $\cP$ and multiplicity vector $\overrightarrow{B}$, we have
\begin{align*}
  \int_{\prod_{r=1}^{P}\Gamma_i^{p_i}} \frac{\partial^{P}}{\partial \hat{n}_1\dots \partial \hat{n}_{P}} V(\cD) \,\prod_{r=1}^{P}|dz_r| &= \int_{\prod_{r=1}^{P}\Gamma_i^{p_i}} \frac{\partial^{P}}{\partial \hat{n}_1\dots \partial \hat{n}_{P}} \prod_{k=1}^Q |\widehat{K}_R(z_{r_k},z_{s_k})|^{2\beta_k} \,\prod_{r=1}^{P}|dz_r|\\
  &= \prod_{k=1}^Q \int_{\Gamma_{r_k}}\int_{\Gamma_{s_k}} \frac{\partial^{2}}{\partial \hat{n}_{r_k} \partial \hat{n}_{s_k}} |\widehat{K}_R(z_{r_k},z_{s_k})|^{2\beta_k} \,|dz_{s_k}||dz_{r_k}|\\
  &= \prod_{k=1}^Q \rho_{r_k,s_k}(\beta_k).
\end{align*}
We now need to count the number of regular diagrams with partition $\cP$ and multiplicity vector $\overrightarrow{B}$. The ordering of the partition we specified, combined with the multiplicity vector $\overrightarrow{B}$ uniquely defines the values $\alpha_1,\dots,\alpha_P$. Given these values we may permute the $\alpha_r$ vertices labelled $r$, independently for each $r$, to get all of the regular diagrams corresponding to these values, $\cP$ and $\overrightarrow{B}$. There are $\prod_{r=1}^P \alpha_r!$ such permutations. Noting that $\prod_{r=1}^P c_{\alpha_r}= \prod_{k=1}^Q c_{\beta_k}^2$, we get that the regular contribution is
\begin{align*}
  \sum_{\alpha_1,\dots,\alpha_{P}=1}^m \prod_{r=1}^{P} \frac{c_{\alpha_r}}{\alpha_r!} \sum_{\mathrm{regular}\,\cD} \prod_{k=1}^Q \rho_{r_k,s_k}(\beta_k) &= \sum_{\beta_1,\dots,\beta_{Q}=1}^m \prod_{k=1}^{Q} c_{\beta_k}^2 \sum_{\cP} \prod_{k=1}^Q \rho_{r_k,s_k}(\beta_k)\\
  &= \sum_{\cP} \sum_{\beta_1,\dots,\beta_{Q}=1}^m \prod_{k=1}^{Q} c_{\beta_k}^2 \rho_{r_k,s_k}(\beta_k)\\
  &= \sum_{\cP} \prod_{k=1}^{Q} \left(\sum_{\beta=1}^m  c_{\beta}^2 \rho_{r_k s_k} (\beta) \right)\\
  &\overset{\eqref{eq: m cov ij}}{=} \sum_{\cP} \prod_{k=1}^{Q} \cov(\Delta_{r_k}^{(m)},\Delta_{s_k}^{(m)})\\
  &\overset{\eqref{eq: RHS rewritten}}{=} \E\Big[\prod_{j=1}^N \xi_i^{p_i}\Big].
\end{align*}

\paragraph{{\bf The irregular contribution}}\hspace{0pt} \\
It remains to see only that the irregular contribution is $o(R^{P/2})$. Further, from \eqref{eq: full moment expansion}, we see that it is enough to bound
\begin{equation*}
  \int_{\prod_{i=1}^{N}\Gamma_i^{p_i}} \frac{\partial^{P}}{\partial \hat{n}_1\dots \partial \hat{n}_{P}} V(\cD) \,\prod_{r=1}^{P}|dz_r|
\end{equation*}
for each irregular diagram $\cD$. Recalling that
\begin{equation*}
  V(\cD)= \prod_{(r,\bar{s})\in e(\cD)} \widehat{K}_R(z_r,z_s) = \prod_{(r,\bar{s})\in e(\cD)} \exp\{R^2(z_r\bar{z}_s -\tfrac12|z_r|^2 -\tfrac12|z_s|^2)\}
\end{equation*}
we have
\begin{equation*}
  \log V(\cD)= R^2 \sum_{(r,\bar{s})\in e(\cD)} (z_r\bar{z}_s -\tfrac12|z_r|^2 -\tfrac12|z_s|^2).
\end{equation*}
Now there are $\alpha_r$ edges in the sum of the form $(r,\bar{s})$ for some $s$, and $\alpha_r$ edges of the form $(s,\bar{r})$ for some $s$. We therefore have
\begin{equation*}
  \log V(\cD)= R^2 \sum_{(r,\bar{s})\in e(\cD)} (z_r(\bar{z}_s -\bar{z}_r)) = R^2 \sum_{(r,\bar{s})\in e(\cD)} (\bar{z}_s(z_r -z_s))
\end{equation*}
from which we conclude that, for $1\leq t \leq P$,
\begin{equation*}
  \frac{\partial}{\partial z_t}V(\cD) = R^2 V(\cD)\sum_{s:(t,\bar{s})\in e(\cD)} (\bar{z}_s -\bar{z}_t)
\end{equation*}
and
\begin{equation*}
  \frac{\partial}{\partial \bar{z}_t}V(\cD) = R^2 V(\cD)\sum_{r:(r,\bar{t})\in e(\cD)} (z_r - z_t).
\end{equation*}
By iterating this argument we see that we may bound
\begin{equation*}
  \Abs{\frac{\partial^{P}}{\partial \hat{n}_1\dots \partial \hat{n}_{P}} V(\cD)}
\end{equation*}
by a (finite) linear combination of terms of the form
\begin{equation*}
  \Abs{V(\cD)} R^{2(P-\gamma)}\fp_{P-2\gamma},
\end{equation*}
where $0\leq\gamma\leq\lfloor\tfrac P2\rfloor$ is an integer and $\fp_{P-2\gamma}$ is a product of $P-2\gamma$ factors (not necessarily distinct) of the form $|z_r-z_s|$ with $(r,\bar{s})\in e(\cD)$. To finish the proof it therefore suffices to see that
\begin{equation*}
  R^{2(P-\gamma)} \int_{\prod_{i=1}^{N}\Gamma_i^{p_i}} \Abs{V(\cD)} \fp_{P-2\gamma} \,\prod_{r=1}^{P}|dz_r| = o(R^{P/2})
\end{equation*}
for any choice of $\gamma$ and $\fp_{P-2\gamma}$.

We now fix $\gamma$ and $\fp_{P-2\gamma}$ and make a reduction to allow us to estimate this quantity. From the irregular diagram $\cD$ we form the reduced diagram $\cD^*$ (see Figure~\ref{fig: red diag}) with $P$ vertices (labelled $1$ to $P$) such that:
\begin{itemize}
  \item For each $1\leq r,s \leq P$ there is at most one edge $(r,s)$.
  \item $(r,s)\in e(\cD^*)$ if $(r,\bar{s})\in e(\cD)$ or $(s,\bar{r})\in e(\cD)$.
\end{itemize}
\begin{figure}
  \centering
  \includegraphics[width=0.5\columnwidth]{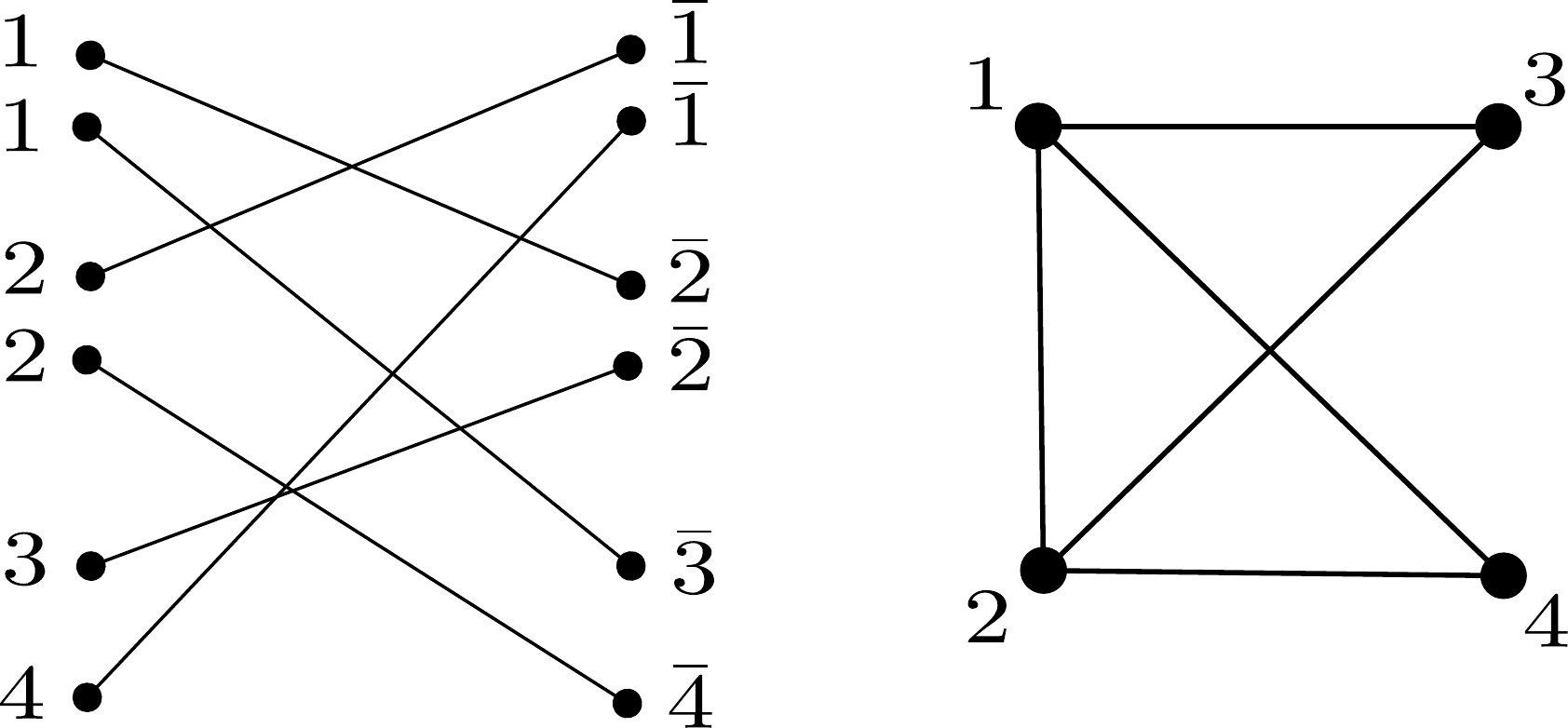}
  \caption{A diagram and its reduced diagram.}
  \label{fig: red diag}
\end{figure}
In other words we form $\cD^*$ from $\cD$ by glueing together the $2\alpha_r$ vertices labelled $r$ or $\bar{r}$ for each $r$, and ignoring the multiplicity of the edges of the resultant diagram. We decompose
\begin{equation*}
  \cD^*=\bigcup_{u=1}^n \cD_u
\end{equation*}
into $n$ connected components that contain $a_u$ vertices and contribute $\ell_u$ factors to $\fp_{P-2\gamma}$. Notice that $n<\tfrac P2$ since $\cD$ is irregular, and that
\begin{equation*}
  \sum_{u=1}^n a_u=P\qquad\text{ and }\qquad  \sum_{u=1}^n \ell_u = P-2\gamma.
\end{equation*}
Moreover, since
\begin{equation*}
|V(\cD)|=\prod_{(r,\bar{s})\in e(\cD)} \exp\{-\tfrac{R^2}2 |z_r-z_s|^2 \} \leq \prod_{(r,s)\in e(\cD^*)} \exp\{ -\tfrac{R^2}2 |z_r-z_s|^2 \}
\end{equation*}
we may factorise the expression we seek to bound as
\begin{align*}
  R^{2(P-\gamma)} \int_{\prod_{i=1}^{N}\Gamma_i^{p_i}} \Abs{V(\cD)} &\fp_{P-2\gamma} \,\prod_{r=1}^{P}|dz_r|\\
  &\leq R^{2(P-\gamma)} \prod_{u=1}^n \int_{\prod \widetilde{\Gamma}_r} \prod_{(r,s)\in e(\cD_u)}e^{-\tfrac{R^2}2|z_r-z_s|^2 } \fp_{\ell_u} \,\prod_{r\in v(\cD_u)}|dz_r|
\end{align*}
where, $\widetilde{\Gamma}_r = \Gamma_i$ for some $i$, $\prod \widetilde{\Gamma}_r = \prod_{r\in v(\cD_u)}\widetilde{\Gamma}_r$ and $\fp_{\ell_u}$ means a product of $\ell_u$ factors of the form $|z_r-z_s|$ with $(r,s)\in e(\cD_u)$. We will show that
\begin{equation}\label{eq: bound irr diag}
  \int_{\prod \widetilde{\Gamma}_r} \prod_{(r,s)\in e(\cD_u)} e^{- \tfrac{R^2}2|z_r-z_s|^2 } \fp_{\ell_u} \,\prod_{r\in v(\cD_u)}|dz_r|\lesssim (\log R)^{\ell_u /2}R^{-(\ell_u + a_u -1)}
\end{equation}
which will yield
\begin{align*}
  R^{2(P-\gamma)} \int_{\prod_{i=1}^{N}\Gamma_i^{p_i}} \Abs{V(\cD)} \fp_{P-2\gamma} \,\prod_{r=1}^{P}|dz_r| &\lesssim R^{2(P-\gamma)} (\log R)^{\sum_u\ell_u /2} R^{-\sum_u(\ell_u + a_u -1)}\\
  &=(\log R)^{\tfrac P2 -\gamma} R^{2P-2\gamma-(P-2\gamma + P -n)}\\
  &= (\log R)^{\tfrac P2 -\gamma} R^{n}=o(R^{P/2})
\end{align*}
as claimed, since $n<\tfrac P2$.

\paragraph{{\bf Proof of estimate \eqref{eq: bound irr diag}}}\hspace{0pt} \\
It remains only to prove \eqref{eq: bound irr diag}. We formulate it as follows: Let $G$ be a connected graph with $a$ vertices, let $\{\widetilde{\Gamma}_1,\dots,\widetilde{\Gamma}_a\} \subset \{\Gamma_1,\dots,\Gamma_N\}$ be a collection of curves (we allow repition) and let $\fp_{\ell}$ be a product of $\ell$ factors of the form $|z_r-z_s|$ with $1\leq r,s\leq a$. Then
\begin{equation}\label{eq: simple bound irr diag}
  \int_{\prod \widetilde{\Gamma}_r} \prod_{(r,s)\in e(G)} e^{- \tfrac{R^2}2|z_r-z_s|^2 } \fp_{\ell} \,\prod_{r=1}^a|dz_r|\lesssim (\log R)^{\ell /2}R^{-(\ell + a -1)}.
\end{equation}
First note that since $e^{- \tfrac{R^2}2|z_r-z_s|^2 }\leq 1$ we may delete some of the edges of $G$ to form a tree. By re-labelling the vertices we may assume that deleting the vertices labelled $1,\dots,r$ yields a connected a graph, for every $r$. We denote by $s(r+1)$ the vertex that is joined to $r+1$ in this reduced graph. See Figure~\ref{fig: tree}. Note that
\begin{equation*}
  \prod_{(r,s)\in e(G)} e^{- \tfrac{R^2}2|z_r-z_s|^2 }\leq \prod_{r=1}^{a-1} e^{- \tfrac{R^2}2|z_r-z_{s(r)}|^2 }.
\end{equation*}
\begin{figure}
  \centering
  \includegraphics[width=0.5\columnwidth]{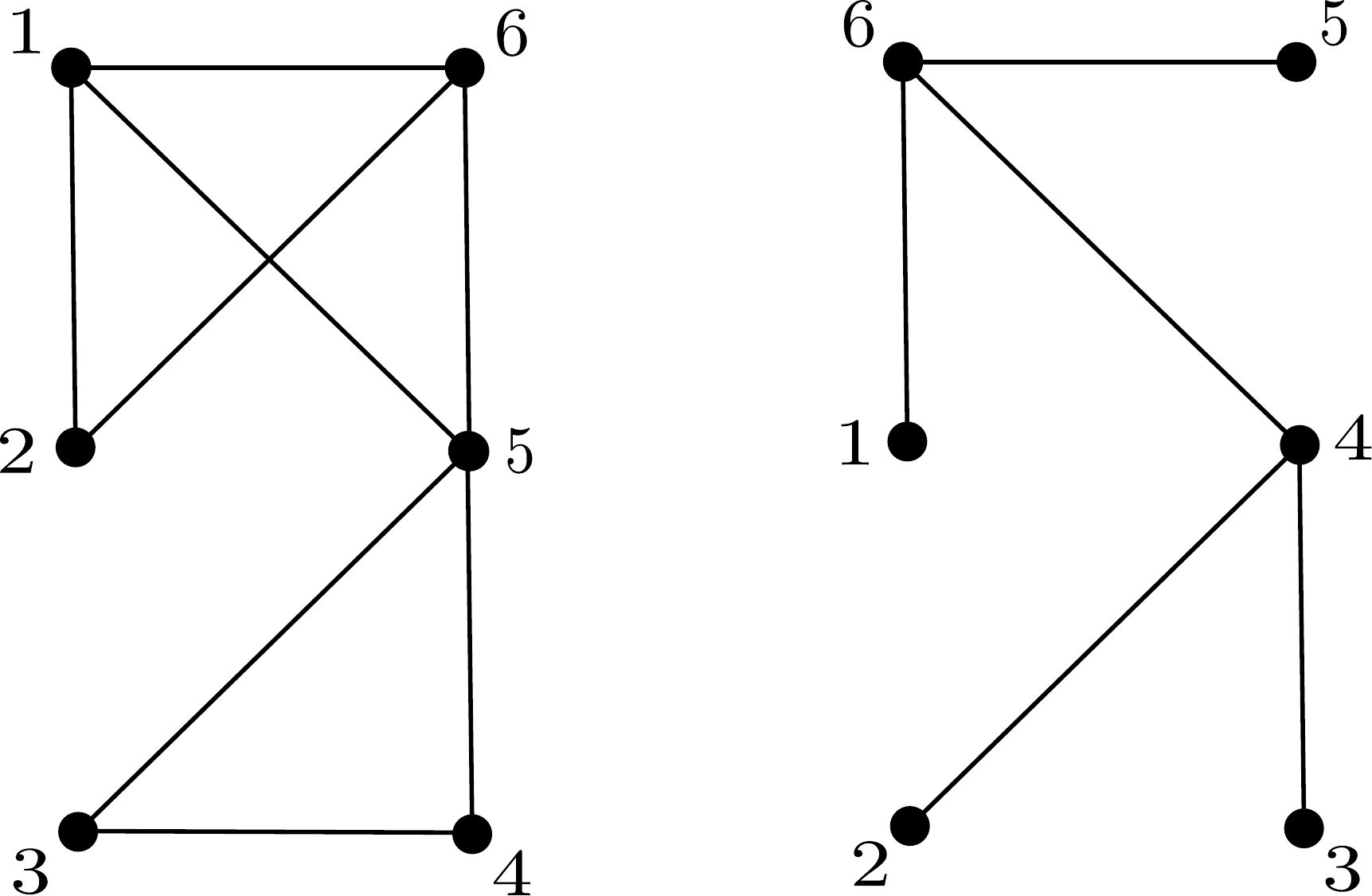}
  \caption{A connected graph and a tree formed by deleting some edges. The vertices are also re-labelled so that successively deleting the vertices labelled $1,2,\dots$ yields a connected graph at every step, $s(1)=s(4)=s(5)=6$ and $s(2)=s(3)=4$.}
  \label{fig: tree}
\end{figure}

Define $\eps_R=\frac{\sqrt{(\ell+a-1)\log R}}R $ and note that if $|z_r-z_{s(r)}|>\eps_R$ for some $r$ then
\begin{equation*}
  \prod_{r=1}^{a-1} e^{- \tfrac{R^2}2|z_r-z_{s(r)}|^2 }\leq R^{-(\ell + a -1)}.
\end{equation*}
Since $\fp_{\ell}$ is uniformly bounded on $\prod \widetilde{\Gamma}_r$, and the curves have finite length, to show \eqref{eq: simple bound irr diag} it suffices to bound
\begin{equation*}
  \int_{\widetilde{\Gamma}_a\times\prod_{r=1}^{a-1} C_r} \prod_{r=1}^{a-1} e^{- \tfrac{R^2}2|z_r-z_{s(r)}|^2 } \fp_{\ell} \,\prod_{r=1}^a|dz_r|,
\end{equation*}
where $C_r=\{z_r\in\widetilde{\Gamma}_r\colon |z_r-z_{s(r)}|\leq\eps_R \}$. Note that in this new domain of integration we have $|z_r-z_s|\leq a\eps_R$, which implies that $\fp_{\ell} \lesssim \eps_R^\ell \lesssim \left( \frac{\sqrt{\log R}} R \right)^\ell$. It therefore suffices to see that
\begin{equation*}
  \int_{\widetilde{\Gamma}_a\times\prod_{r=1}^{a-1} C_r} \prod_{r=1}^{a-1} e^{- \tfrac{R^2}2|z_r-z_{s(r)}|^2 } \,\prod_{r=1}^a|dz_r|\lesssim R^{1-a}.
\end{equation*}
We claim that
\begin{equation}\label{eq: irr estimate one int}
  \int_{C_1} e^{- \tfrac{R^2}2|z_1-z_{s(1)}|^2 } \,|dz_1|\lesssim R^{-1},
\end{equation}
uniformly in the remaining variables. Applying this estimate $a-1$ times (replacing the index $1$ by $2,\dots,a-1$) yields
\begin{equation*}
  \int_{\widetilde{\Gamma}_a\times\prod_{r=1}^{a-1} C_r} \prod_{r=1}^{a-1} e^{- \tfrac{R^2}2|z_r-z_{s(r)}|^2 } \,\prod_{r=1}^a|dz_r|\lesssim R^{1-a}\int_{\widetilde{\Gamma}_a}|dz_a|\lesssim R^{1-a}.
\end{equation*}

\paragraph{{\bf Proof of estimate \eqref{eq: irr estimate one int}}}\hspace{0pt} \\
It remains to show \eqref{eq: irr estimate one int}. Fix $(z_2,\dots,z_a)\in \prod_{r=2}^{a-1} C_r\times\widetilde{\Gamma}_a$ and define $z_1^*$ to be the point in $\widetilde{\Gamma}_1$ closest to $z_{s(1)}$. (If there are many points we choose one arbitrarily; it might be the case that $z_1^*=z_{s(1)}$.) We have $|z_1-z_{s(1)}|\geq\tfrac12|z_1-z_1^*|$ since $|z_1-z_{s(1)}|\geq|z_1-z_1^*|-|z_1^*-z_{s(1)}|\geq|z_1-z_1^*|-|z_1-z_{s(1)}|$. This yields
\begin{equation*}
  \int_{C_1} e^{- \tfrac{R^2}2|z_1-z_{s(1)}|^2 } \,|dz_1|\leq\int_{C_1} e^{- \tfrac{R^2}8|z_1-z_1^*|^2 } \,|dz_1|,
\end{equation*}
which we bound exactly as in Section~\ref{sec: proof of claims}. Let $\gamma\colon[0,1]\to\C$ be a parameterisation of $\widetilde{\Gamma}_1$ satisfying $M_1\leq|\dot{\gamma}(t)|\leq M_2$ and $M_1'|t-s|\leq|\gamma_j(t)-\gamma_j(s)|\leq M_2|t-s|$ with $M_1,M_1'>0$. Denote by $t^*$ the (unique) value such that $\gamma(t^*)=z_1^*$. Notice that
\begin{equation*}
  C_1\subseteq\left\{z_1=\gamma(t)\colon |t-t^*|\leq\frac{\eps_R}{M_1'}\right\}.
\end{equation*}
We then bound
\begin{align*}
  \int_{C_1} e^{- \tfrac{R^2}8|z_1-z_1^*|^2 } \,|dz_1|&\leq \int_{|t-t^*|\leq\tfrac{\eps_R}{M_1'}} e^{- \tfrac{R^2}8|\gamma(t)-\gamma(t^*)|^2 } |\dot{\gamma}(t)|\,dt\\
  &\leq M_2 \int_{|t-t^*|\leq\tfrac{\eps_R}{M_1'}} e^{- \tfrac{R^2}8 (M_1')2(t-t^*)^2 } \,dt\\
  &=\frac{2\sqrt2M_2}{RM_1'}\int_{|s|\leq\tfrac{R\eps_R}{2\sqrt2}} e^{- s^2 } \,ds\\
  &\lesssim R^{-1}.
\end{align*}
This completes the proof.
\begin{bibdiv}
\begin{biblist}

\bib{DE}{article}{
   author={Diaconis, Persi},
   author={Evans, Steven N.},
   title={Linear functionals of eigenvalues of random matrices},
   journal={Trans. Amer. Math. Soc.},
   volume={353},
   date={2001},
   number={7},
   pages={2615--2633},
   issn={0002-9947},
   review={\MR{1828463}},
   doi={10.1090/S0002-9947-01-02800-8},
}

\bib{Feld}{article}{
	author = {Feldheim, Naomi},
	title = {Variance of the number of zeroes of shift-invariant Gaussian analytic functions},
	date = {2015},
	note = {arXiv: 1309.2111 [math.PR]}
}

\bib{HKPV}{book}{
   author={Hough, John Ben},
   author={Krishnapur, Manjunath},
   author={Peres, Yuval},
   author={Vir{\'a}g, B{\'a}lint},
   title={Zeros of Gaussian analytic functions and determinantal point processes},
   series={University Lecture Series},
   volume={51},
   publisher={American Mathematical Society},
   place={Providence, RI},
   date={2009},
   pages={x+154},
   isbn={978-0-8218-4373-4},
   review={\MR{2552864 (2011f:60090)}},
}

\bib{HKO}{article}{
   author={Hughes, Christopher P.},
   author={Keating, Jonathon P.},
   author={O'Connell, Neil},
   title={},
   journal={Comm. Math. Phys.},
   volume={220},
   date={2001},
   number={2},
   pages={429--451},
   issn={0010-3616},
   review={\MR{1844632}},
   doi={10.1007/s002200100453},
}

\bib{HNY}{article}{
   author={Hughes, Christopher P.},
   author={Nikeghbali, Ashkan},
   author={Yor, Marc},
   title={An arithmetic model for the total disorder process},
   journal={Probab. Theory Related Fields},
   volume={141},
   date={2008},
   number={1-2},
   pages={47--59},
   issn={0178-8051},
   review={\MR{2372965}},
   doi={10.1007/s00440-007-0079-9},
}

\bib{Jan}{book}{
   author={Janson, Svante},
   title={Gaussian Hilbert spaces},
   series={Cambridge Tracts in Mathematics},
   volume={129},
   publisher={Cambridge University Press, Cambridge},
   date={1997},
   pages={x+340},
   isbn={0-521-56128-0},
   review={\MR{1474726 (99f:60082)}},
   doi={10.1017/CBO9780511526169},
}

\bib{Kah}{book}{
   author={Kahane, Jean-Pierre},
   title={Some random series of functions},
   series={Cambridge Studies in Advanced Mathematics},
   volume={5},
   edition={2},
   publisher={Cambridge University Press, Cambridge},
   date={1985},
   pages={xiv+305},
   isbn={0-521-24966-X},
   isbn={0-521-45602-9},
   review={\MR{833073 (87m:60119)}},
}

\bib{KM}{article}{
   author={Kang, Nam-Gyu},
   author={Makarov, Nikolai G.},
   title={Gaussian free field and conformal field theory},
   language={English, with English and French summaries},
   journal={Ast\'erisque},
   number={353},
   date={2013},
   pages={viii+136},
   issn={0303-1179},
   isbn={978-2-85629-369-0},
   review={\MR{3052311}},
}

\bib{Leb}{article}{
   author={Lebowitz, Joel L.},
   title={Charge fluctuations in Coulomb systems},
   journal={Phys. Rev. A},
   volume={27},
   date={1983},
   number={3},
   pages={1491--1494},
   doi={10.1103/PhysRevA.27.1491},
}

\bib{MPRW}{article}{
   author = {Marinucci, Domenico},
   author = {Peccati, Giovanni},
   author = {Rossi, Maurizia},
   author = {Wigman, Igor},
   title = {Non-Universality of Nodal Length Distribution for Arithmetic Random Waves},
   journal = {Geom. Funct. Anal.},
   date = {2016},
   doi={10.1007/s00039-016-0376-5},
   note={Available online},
   url={http://dx.doi.org/10.1007/s00039-016-0376-5}
}

\bib{Mon}{book}{
   author={Montgomery, Hugh L.},
   title={Ten lectures on the interface between analytic number theory and harmonic analysis},
   series={CBMS Regional Conference Series in Mathematics},
   volume={84},
   publisher={Published for the Conference Board of the Mathematical Sciences, Washington, DC; by the American Mathematical Society, Providence, RI},
   date={1994},
   pages={xiv+220},
   isbn={0-8218-0737-4},
   review={\MR{1297543}},
   doi={10.1090/cbms/084},
}

\bib{NS}{article}{
   author={Nazarov, Fedor},
   author={Sodin, Mikhail},
   title={Fluctuations in random complex zeroes: asymptotic normality revisited},
   journal={Int. Math. Res. Not. IMRN},
   date={2011},
   number={24},
   pages={5720--5759},
   issn={1073-7928},
   review={\MR{2863379 (2012k:60103)}},
}

\bib{NSV}{article}{
   author={Nazarov, Fedor},
   author={Sodin, Mikhail},
   author={Volberg, Alexander},
   title={The Jancovici-Lebowitz-Manificat law for large fluctuations of random complex zeroes},
   journal={Comm. Math. Phys.},
   volume={284},
   date={2008},
   number={3},
   pages={833--865},
   issn={0010-3616},
   review={\MR{2452596}},
   doi={10.1007/s00220-008-0646-7},
}

\bib{NP}{article}{
   author={Nualart, David},
   author={Peccati, Giovanni},
   title={Central limit theorems for sequences of multiple stochastic
   integrals},
   journal={Ann. Probab.},
   volume={33},
   date={2005},
   number={1},
   pages={177--193},
   issn={0091-1798},
   review={\MR{2118863}},
   doi={10.1214/009117904000000621},
}

\bib{PT4mom}{article}{
   author={Peccati, Giovanni},
   author={Tudor, Ciprian A.},
   title={Gaussian limits for vector-valued multiple stochastic integrals},
   conference={
      title={S\'eminaire de Probabilit\'es XXXVIII},
   },
   book={
      series={Lecture Notes in Math.},
      volume={1857},
      publisher={Springer, Berlin},
   },
   date={2005},
   pages={247--262},
   review={\MR{2126978}},
}

\bib{SZ}{article}{
   author={Shiffman, Bernard},
   author={Zelditch, Steve},
   title={Number variance of random zeros on complex manifolds},
   journal={Geom. Funct. Anal.},
   volume={18},
   date={2008},
   number={4},
   pages={1422--1475},
   issn={1016-443X},
   review={\MR{2465693 (2009k:32019)}},
   doi={10.1007/s00039-008-0686-3},
}

\bib{ST1}{article}{
   author={Sodin, Mikhail},
   author={{Ts}irelson, Boris},
   title={Random complex zeroes. I. Asymptotic normality},
   journal={Israel J. Math.},
   volume={144},
   date={2004},
   pages={125--149},
   issn={0021-2172},
   review={\MR{2121537 (2005k:60079)}},
   doi={10.1007/BF02984409},
}

\bib{W}{article}{
   author={Wieand, Kelly},
   title={Eigenvalue distributions of random unitary matrices},
   journal={Probab. Theory Related Fields},
   volume={123},
   date={2002},
   number={2},
   pages={202--224},
   issn={0178-8051},
   review={\MR{1900322}},
   doi={10.1007/s004400100186},
}

\end{biblist}
\end{bibdiv}

\end{document}